\documentclass[11pt]{article}
\usepackage{graphicx, amsmath, amsfonts, amssymb, amsthm, fullpage, authblk, bbm, setspace}
\usepackage[dvipsnames]{xcolor}
\usepackage{algorithm}
\usepackage{algpseudocode}
\algnewcommand\algorithmicinput{\textbf{INPUT:}}
\algnewcommand\INPUT{\item[\algorithmicinput]}
\algnewcommand\algorithmicoutput{\textbf{OUTPUT:}}
\algnewcommand\OUTPUT{\item[\algorithmicoutput]}

\usepackage{hyperref}[]
\hypersetup{
    colorlinks=true,
    linkcolor=blue,
    filecolor=magenta,      
    urlcolor=cyan,
      citecolor=blue,
    }
\usepackage{cleveref}

\newtheorem{theorem}{Theorem}
\newtheorem{lemma}[theorem]{Lemma}
\newtheorem{proposition}[theorem]{Proposition}
\newtheorem{corollary}[theorem]{Corollary}

\newtheorem{definition}{Definition}

\newtheorem{assumption}{Assumption}

\allowdisplaybreaks
\newcommand{\dint}{\,\mathrm{d}}

\usepackage[round]{natbib}

\title{Differentially private hypothesis testing in survival analysis}
\author{Elly K.~H.~Hung}
\author{Yi Yu}
\affil{Department of Statistics, University of Warwick}
\date{}
\begin{document}

\maketitle

\begin{abstract}
Survival analysis is widely used in applications involving sensitive individual-level data, yet differentially private hypothesis testing for right-censored data remains largely undeveloped. We initiate a finite-sample theory of private hypothesis testing in survival analysis applications. For Cox regression coefficients, we develop private partial-likelihood-ratio and score-type tests, including a private calibration procedure for the rejection threshold. For cumulative hazard functions, we propose a private distributed two-sample test. Across these problems, we prove differential privacy and finite-sample testing guarantees, as well as minimax lower bounds. Our results identify when privacy is statistically negligible, when it dominates the testing rate, and where optimal private rates for testing in semiparametric survival models remain open. This theoretical analysis is accompanied by numerical experiments on simulated data. 

\vskip 0.5cm
\textbf{Keywords}: Cox model, differential privacy, hypothesis testing, survival analysis
\end{abstract}

\section{Introduction}
Survival analysis underpins decision-making in many high-stakes applications, including medicine \citep[e.g.][]{cox1972regression}, engineering reliability \citep[e.g.][]{meeker2021reliability}, and customer retention \citep[e.g.][]{fader2007project}, where the data often contain sensitive individual-level information. However, it has been shown that individuals may be re-identified even from seemingly anonymized datasets \citep[e.g.][]{narayanan2008deanonymization, rocher2019reidentification}. Differential privacy \citep{dwork2006} provides a rigorous mathematical framework for protecting against such disclosure.

\begin{definition}[$(\epsilon, \delta)$-differential privacy] \label{def-cdp} 
For $\epsilon > 0$ and $\delta \geq 0$, a privacy mechanism $M$ satisfies $(\epsilon, \delta)$-differentially private if it is a conditional distribution on a space $\mathcal{R}$ such that
\begin{equation*}
    M(R \in A \mid D) \leq e^\epsilon M(R \in A \mid D') + \delta,
\end{equation*}
for any measurable set $A$ in the sigma-algebra of $\mathcal{R}$ and any pair of datasets $(D, D')$ differing in at most one data point.
\end{definition}

While there has been increasing attention towards differentially private estimation procedures for survival analysis 
\citep[e.g.][]{Nguyen2017DPsurvival, pmlr-v126-gondara20a, EgeaEscobar2025survival, FDPCox}, private hypothesis testing for right-censored data remains much less developed. The only prior work we are aware of is \cite{pmlr-v126-gondara20a}, which considers the empirical performance of a test statistic derived from a privatized discrete-time Kaplan--Meier estimator. Most of the private testing literature assumes independent likelihood decompositions, which does not hold in many survival analysis applications due to the observations being linked through at-risk sets, so the standard approaches cannot be directly applied.

We initiate a finite-sample study of differentially private hypothesis testing for several canonical problems in survival analysis. First, for binary testing of Cox regression coefficients, we construct a private partial likelihood ratio test and prove a detection guarantee, together with a minimax lower bound that identifies the privacy-dependent cost and shows a remaining dimension-dependent gap. Second, for Cox coefficient identity testing with a simple null and composite alternative, we develop a private score-type test based on the Euclidean norm of the score, avoiding private estimation of the inverse information matrix. The rejection threshold is estimated by a private calibration procedure. Third, for two-sample testing of cumulative hazard functions under right censoring, we propose a distributed private test and prove that its separation rate is minimax optimal up to poly-logarithmic factors. Finally, we show the empirical performance of our proposed tests in a simulation study. 

\subsection{Related literature}
Although hypothesis testing under differential privacy (\Cref{def-cdp}) has received less attention than estimation, interest in the area has grown in recent years. Parametric procedures that have been developed include tests for linear regression coefficients \citep[e.g.][]{sheffet2017differentially, barrientos2019regression, alabi2023regression}, for analysis of variance \citep[e.g.][]{campbell2018dpanova}, and for estimating $p$-values based on bounded-influence M-estimators \citep[e.g.][]{avella2020privacy}. Differentially private analogues of classical nonparametric tests have also been proposed, including $\chi^2$ tests \citep[e.g.][]{gaboardi2016dpchisq, rogers2017privatechisq}, rank-based tests \citep[e.g.][]{couch2019dpnonparametric}, and Kolmogorov--Smirnov tests \citep[e.g.][]{awan2025KS}. There has also been a closely related line of work on differentially private confidence intervals \citep[e.g.][]{ferrando2022DPCI, chadha2024resampling, wang2025dpbootstrap}. 

Most of the aforementioned literature adopts an asymptotic perspective. There have also been works on minimax separation rates, including for binary likelihood ratio tests \citep{canonne2019structure}, tests for binomial data \citep{awan2018binomial}, tests for discrete distributions \citep[e.g.][]{cai2017privit, acharya2018discretedists}, Gaussian mean testing \citep[e.g.][]{pmlr-v178-narayanan22a}, testing in the white-noise-with-drift model \citep{cai2024federatednonparametrichypothesistesting}, and permutation tests \citep{kim2026DPpermutation}. Building on the subsample-and-aggregate idea of \cite{cai2017privit}, \cite{pena2022subsample} and \cite{kazan2023test} study black-box approaches for converting non-private tests into differentially private procedures, although these methods often have sub-optimal sample complexity.

The literature discussed here, as well as this paper, is set in the central model of differential privacy. There has also been a body of research on hypothesis testing under local or other distributed privacy constraints, see, for example, Section 1 of \cite{JMLR:v26:24-2016} for a review.

\section{Tests for Cox regression coefficients}
\subsection{Model and assumptions}
\label{sec:assumptions}
In this section, we consider binary (\Cref{sec-2}) and identity (\Cref{sec-3}) hypothesis testing for the regression coefficients in a Cox proportional hazards model \citep{cox1972regression}. The Cox model is one of the most widely used models in survival analysis, and assumes that conditional on a covariate process $\{Z(t)\}_{t\in[0,1]}$, the hazard rate of the event time $\widetilde T$ is
\begin{equation}\label{eq:hazard}
\lambda(t) = \lambda_0(t)\exp\{\beta^{*\top}Z(t)\},
\end{equation}
where $\beta^*\in\mathbb{R}^d$ is a vector of unknown regression coefficients and $\lambda_0(\cdot)$ is an unknown baseline hazard function. We observe right-censored data $(T, \Delta, Z)$, where $T = \min\{\widetilde{T}, C\}$ and $\Delta = \mathbbm{1}\{\widetilde{T} \leq C\}$ for a censoring time $C$. 

We adopt the counting process representation of \cite{andersen1982}.  For $i \in \{1, 
\ldots, n\}$, define the processes $N_i(t) = \mathbbm{1}\{T_i < t,  \Delta_i = 1\}$ and $Y_i(t) = \mathbbm{1}\{N_i(t) \geq t\}$ for $t \in [0, 1]$. For any $\beta \in \mathbb{R}^d$, the log partial likelihood for the Cox model is therefore
\begin{equation}
\label{eq:Cox_partial_likelihood}
    \ell_n(\beta) = \sum_{i=1}^n \int_0^1 \beta^\top Z_i(t) \dint N_i(t) - \sum_{i=1}^n \int_0^1 \log \left( \sum_{j=1}^n Y_j(t) \exp\{\beta^\top Z_j(t)\}\right) \dint N_i(t),
\end{equation}
and its gradient is
\begin{equation} \label{eq:Cox_grad}
    \dot{\ell}_n(\beta) = \sum_{i=1}^n \int_0^1 \{Z_i(t) - \bar{Z}(t, \beta) \} \dint N_i(t), \mbox{ with } \bar{Z}(t, \beta) = \frac{\sum_{i=1}^n Z_i(t)Y_i(t) \exp(\beta^\top Z_i(t))}{\sum_{i=1}^n Y_i(t) \exp(\beta^\top Z_i(t))}.
\end{equation}

We assume that $\{(\widetilde{T}_i, C_i, \{Z_i(t), \,t \in [0, 1]\})\}_{i=1}^n$ are independent and identically distributed copies of $(\widetilde{T}, C, \{Z(t), \,t \in [0, 1]\})$, where the conditional hazard of $\tilde{T_i}$ given $Z_i$ is as specified in \eqref{eq:hazard}. We collect further assumptions used in our results below. 

\begin{assumption} \label{assp:covariates} Assume that $\{Z(t), \,t \in [0, 1]\}$ is predictable, and there exist absolute constants $C_Z, L_Z >0$ such that 
\begin{equation*}
     \mathbb{P}\left(\sup_{t \in [0, 1]} \|Z(t)\|_2 \leq C_Z \right) = 1 \quad \mathrm{and} \quad \mathbb{P} \left( \sup_{0 \leq s \leq t \leq 1} \|Z(s) - Z(t)\|_2 \leq L_Z|s- t| \right) = 1.
\end{equation*}
\end{assumption}

\begin{assumption}\label{assp:indep_atrisk} Assume that the event time $\widetilde{T}$ and censoring time $C$ are conditionally independent given the covariate process $Z$. There exist absolute constants $p_0, C_\lambda > 0$ such that $\mathbb{P}(Y(1)=1) \geq p_0$ and $\lambda_0(t) \leq C_\lambda$ for all times $t \in [0, 1]$. 
\end{assumption}

\begin{assumption}\label{assp:eigenvalue} Define a population version of $-\ddot{\ell}_n(\beta^*)/n$ as 
\begin{equation} \label{eq-G-def}
G(\beta^*) = \mathbb{E} \left[ \int_0^1 Y(t) \exp\{\beta^{*\top} Z(t)\}\{Z(t) - \mu(t, \beta^*)\}^{\otimes 2} \dint \Lambda_0(t) \right], 
\end{equation}
where $\mu(t, \beta) = \mathbb{E}[Z(t) Y(t) \exp\{\beta^\top Z(t)\}]/\mathbb{E}[Y(t) \exp\{\beta^\top Z(t)\}]$ and we denote $v^{\otimes 2} = vv^\top$.  For some absolute constants $\rho_+ \geq \rho_- > 0$, assume that the eigenvalues of $G(\beta^*)$ satisfy $\rho_-/d \leq \lambda_{\min}(G(\beta^*)) \leq \lambda_{\max}(G(\beta^*)) \leq \rho_+/d$. 
\end{assumption}
For any $\beta \in \mathbb{R}^d$, we will use $\mathcal{P}(\beta)$ to denote the set of distributions on $[0, 1] \times \{0, 1\} \times \mathbb{R}^d$ which satisfy Assumptions~\ref{assp:covariates}, \ref{assp:indep_atrisk}, and~\ref{assp:eigenvalue}, and where the failure times are generated according to the model specified by \eqref{eq:hazard}.  
\Cref{assp:covariates} imposes boundedness and smoothness on the covariate process. Similar boundedness conditions are frequently imposed in the differential privacy literature to calibrate the level of noise needed to preserve privacy.  \Cref{assp:indep_atrisk} is the usual non-informative censoring condition in survival analysis, together with mild positivity and boundedness conditions on the at-risk probability at the end of the study and the baseline hazard function. \Cref{assp:eigenvalue} can be seen as requiring the expected information matrix to be uniformly well conditioned, to ensure sufficient curvature for identification.

\subsection{Binary hypothesis testing}
\label{sec-2}
We begin with binary hypothesis testing for the regression coefficient in the Cox model \eqref{eq:hazard}, i.e.
\begin{equation}
        \mathrm{H}_0: \beta^* = \beta_0 \quad \mbox{vs.} \quad \mathrm{H}_1: \beta^* = \beta_1.
        \label{eq:binary_test_def}
\end{equation}

For binary hypothesis testing without privacy constraints, the likelihood ratio test is optimal by the Neyman--Pearson lemma \citep[e.g.][Theorem 8.3.12]{casella2024statistical}. The standard asymptotic results can also be applied to the Cox partial likelihood for inference on $\beta^*$ \citep{therneau2000cox}. As a natural starting point, we propose a differentially private version of the partial likelihood ratio test, and state its detection guarantee in \Cref{prop:beta_binary_upper}.

\begin{proposition} \label[proposition]{prop:beta_binary_upper}
    Assume that Assumptions~\ref{assp:covariates}, \ref{assp:indep_atrisk} and \ref{assp:eigenvalue} hold. For any $\beta_0, \beta_1 \in \mathbb{B}_{C_\beta}(0)$, with $\beta_0 \neq \beta_1$ and $C_{\beta} > 0$ being an absolute constant, consider the hypothesis test in \eqref{eq:binary_test_def}.
    Define the log likelihood ratio test as $\phi = \mathbbm{1}\{ \gamma(\beta_0, \beta_1) <0\}$, where 
    \begin{equation*}
        \gamma(\beta_0, \beta_1) = \ell_n(\beta_0) - \ell_n(\beta_1) + \frac{c_{\beta_0, \beta_1} (1+\log(n))\|\beta_0 - \beta_1\|_2}{\epsilon} W, \quad W \sim \mathrm{Laplace}(\mu=0, b=1),
    \end{equation*}
    with $c_{\beta_0, \beta_1}= 4C_Z + \exp(2\max\{\|\beta_0\|_2, \|\beta_1\|_2\} C_Z)(2C_Z + C_Z^2)$. We have that (i) the test $\phi$ is $(\epsilon, 0)$-differentially private; and (ii) there exists absolute constants $n_0, C_1, C_2 > 0$ are absolute constants such that for any $n > n_0$, provided that $d \log(nd)^2 \leq C_1 \sqrt{n}$ and 
    \begin{equation}
        \|\beta_1 - \beta_0\|^2_2 \geq C_2 \left(\frac{d}{n} + \frac{d^2(1+\log(n))^2}{n^2\epsilon^2}  \right),
        \label{eq:beta_binary_rate}
    \end{equation}
    it holds that $\mathbb{P}_{\mathrm{H}_0}(\phi=1) + \mathbb{P}_{\mathrm{H}_1}(\phi=0) < 1/3$.
\end{proposition}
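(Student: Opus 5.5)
Part (i) reduces to a sensitivity bound for $\ell_n(\beta_0)-\ell_n(\beta_1)$ followed by the Laplace mechanism. Part (ii) reduces to a quadratic expansion of the partial likelihood ratio around $\beta^*$ together with Laplace tail bounds.

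\textbf{Privacy.} I will show that for neighbouring datasets $D,D'$ differing in one subject,
\[
|\{\ell_n(\beta_0)-\ell_n(\beta_1)\}(D)-\{\ell_n(\beta_0)-\ell_n(\beta_1)\}(D')|\le c_{\beta_0,\beta_1}(1+\log n)\|\beta_0-\beta_1\|_2 .
\]
Write $\ell_n(\beta_0)-\ell_n(\beta_1)=-\int_0^1 \dot\ell_n(\beta_s)^\top(\beta_1-\beta_0)\dint s$ along the segment $\beta_s=\beta_0+s(\beta_1-\beta_0)$. It then suffices to bound the change in $\sum_i\int\{Z_i-\bar Z(t,\beta)\}\dint N_i$ uniformly over $\beta$ in the segment.

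\emph{Own contribution.} The changed subject's own term contributes at most $2C_Z$ from each dataset, giving the $4C_Z$ part of the constant.

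\emph{Other subjects.} The other terms change only through $\bar Z(t,\beta)$, because one entry of the at-risk sums is altered. At the $k$-th ordered event time the risk set has size at least $n-k+1$ (ignoring ties). Each weight $e^{\beta^\top Z_j}$ lies in $[e^{-\|\beta\|C_Z},e^{\|\beta\|C_Z}]$, so altering one weight moves $\bar Z$ by at most $e^{2\|\beta\|C_Z}(2C_Z+C_Z^2)/(n-k+1)$; the $C_Z^2$ term absorbs the logarithmic-denominator variant of the same computation.

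\emph{Summation.} Summing over events gives a harmonic sum $\sum_k 1/k\le 1+\log n$.

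The Laplace mechanism then yields $(\epsilon,0)$-DP, and $\phi$ is $\epsilon$-DP by post-processing.

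\textbf{Power.} Under $\mathrm H_0$ (the $\mathrm H_1$ case is symmetric), a Taylor expansion gives
\[
\ell_n(\beta_0)-\ell_n(\beta_1)=-\dot\ell_n(\beta_0)^\top h+\tfrac12 h^\top\{-\ddot\ell_n(\tilde\beta)\}h,\qquad h=\beta_1-\beta_0 .
\]
The argument then has three steps.

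\emph{Score term.} The score $\dot\ell_n(\beta_0)$ is a martingale integral with respect to $\dint M_i=\dint N_i-Y_ie^{\beta_0^\top Z_i}\dint\Lambda_0$. Its covariance is $\approx nG(\beta_0)$ up to the $\bar Z$ versus $\mu$ error. Hence $\mathbb E(\dot\ell_n^\top h)^2\lesssim n\rho_+\|h\|^2/d$, and Chebyshev gives $|\dot\ell_n^\top h|\le C\sqrt{n/d}\,\|h\|$ with probability at least $0.95$.

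\emph{Curvature term.} I need $-\ddot\ell_n(\tilde\beta)/n\succeq (\rho_-/(2d))I$ uniformly for $\tilde\beta$ on the segment. The steps are as follows.
\begin{itemize}
\item Replace $\bar Z(t,\beta)$ by $\mu(t,\beta)$ and empirical sums by expectations, using matrix concentration (matrix Bernstein) for the bounded processes.
\item Use Assumption~\ref{assp:covariates} with a discretization/bracketing over $t$ to make the bounds uniform in time; Assumption~\ref{assp:indep_atrisk} keeps denominators away from zero.
\item Control the change from $\beta^*$ to $\tilde\beta$ by the Lipschitz continuity of $\beta\mapsto G(\beta)$, provided $\|h\|$ is small. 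If $\|h\|$ is large, use concavity of $\ell_n$ to reduce to the boundary of a small ball.
\end{itemize}
The deviation is of order $\sqrt{d\log(nd)/n}$ in operator norm, possibly with extra log factors. It must be $\ll 1/d$, which is where the condition $d\log(nd)^2\le C_1\sqrt n$ enters.

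\emph{Combining.} The deterministic part is then $\gtrsim n\|h\|^2/d$. The Laplace noise is bounded by $C(1+\log n)\|h\|/\epsilon$ with probability at least $0.95$. The sign of $\gamma$ is correct once
\[
\frac{n\|h\|^2}{d}\gtrsim \sqrt{\frac nd}\,\|h\|+\frac{(1+\log n)\|h\|}{\epsilon},
\]
which is exactly \eqref{eq:beta_binary_rate}. Summing the error probabilities under both hypotheses gives a total below $1/3$.

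\textbf{Main obstacle.} I expect the uniform lower bound on the empirical information at the scale $1/d$ to be hardest. It requires handling the dependence through the at-risk sets (the $\bar Z$ ratio) and achieving the $1/d$ eigenvalue precision with only the stated $\sqrt n$ sample-size condition. I would first try a fixed-$\beta$ matrix Bernstein bound plus a union bound over a time grid, and then handle the $\beta$-perturbation via a Lipschitz bound on $\ddot\ell_n$.
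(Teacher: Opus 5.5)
Your overall architecture matches the paper's. For privacy, you bound the sensitivity of $\ell_n(\beta_0)-\ell_n(\beta_1)$ via the mean value theorem and the gradient sensitivity (the paper simply cites Lemma 9 of FDPCox). For power, you use the same score/curvature/Laplace decomposition, with Chebyshev applied to the score through the $Z_i-\mu$ versus $\bar Z-\mu$ martingale split.

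There is a small slip in your sensitivity sketch. With censoring, the risk set at the $k$-th event time has size at most $n-k+1$, not at least. The harmonic bound still holds, but because the at-risk counts at the untied event times are strictly decreasing, hence distinct integers in $\{1,\dots,n\}$.

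The genuine gap is the curvature step, which does not close as you have quantified it, for two reasons.

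\textbf{The deviation rate.} An operator-norm deviation of order $\sqrt{d\log(nd)/n}$ is $\ll 1/d$ only if $d^3\log(nd)\lesssim n$. The hypothesis $d\log(nd)^2\le C_1\sqrt n$ does not give this; take $d\asymp n^{2/5}$. You need a bound with no polynomial factor of $d$. The paper uses $\|\ddot\ell_n(\beta^*)+nG(\beta^*)\|\lesssim\log(dn)^2\sqrt n$ with probability $1-O(1/n)$, from Lemma 18 of FDPCox. A matrix Bernstein/Freedman argument on the bounded pieces of the decomposition gives the same polylog$/\sqrt n$ order; the extra $\sqrt d$ is what a covering net over the sphere produces. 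Requiring this to be $\ll\rho_-/d$ is exactly the stated sample-size condition.

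\textbf{The passage from $\beta^*$ to $\tilde\beta$.} Moving to the random intermediate point cannot be done with an additive Lipschitz bound. Boundedness of $Z$ only gives $\|\ddot\ell_n(\beta)-\ddot\ell_n(\beta^*)\|/n\le L\|\beta-\beta^*\|_2$ with $L=O(1)$, which is useless unless $\|\beta-\beta^*\|_2\ll 1/d$. Your concavity reduction cannot rescue this. It cannot shrink the radius below $\sqrt{d/n}+d(1+\log n)/(n\epsilon)$, since otherwise the secant slope no longer dominates the score and the noise. That radius is not $\ll 1/d$ under the hypotheses.

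The missing ingredient is the multiplicative comparison of Huang et al.\ (2013, Lemma 3.2), which the paper uses:
\[
e^{-2C_Z\|b\|_2}\{-\ddot\ell_n(\beta)\}\preceq -\ddot\ell_n(\beta+b)\preceq e^{2C_Z\|b\|_2}\{-\ddot\ell_n(\beta)\}.
\]
This makes the perturbation proportional to the curvature itself. Apply it in this PSD form with $\|\beta_1-\beta_0\|_2\le 2C_\beta$. You get $-\ddot\ell_n(\tilde\beta)\succeq e^{-4C_ZC_\beta}\{-\ddot\ell_n(\beta^*)\}\succeq e^{-4C_ZC_\beta}\,n\rho_-/(2d)\,I$ along the whole segment. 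No smallness of $\|h\|_2$ and no concavity reduction is then needed.

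This PSD route is slightly cleaner than the paper's own. The paper converts the comparison to an operator-norm bound, which creates a $c_3\|h\|_2^3\,n/d$ remainder. That remainder is dominated by $n\rho_-\|h\|_2^2/(3d)$ only when $\|h\|_2$ is small.
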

The test in \Cref{prop:beta_binary_upper} uses the Laplace mechanism \citep[e.g.][Definition 3.3]{dwork2014algorithmic}, which adds Laplace noise calibrated to the \emph{sensitivity} \citep[e.g.][Definition 3.1]{dwork2014algorithmic} of the log partial likelihood ratio. 
\Cref{prop:beta_binary_upper} is stated for pure differential privacy, and therefore satisfies $(\epsilon, \delta)$-differential privacy for all $\delta \geq 0$. There are regimes of $\delta>0$ where replacing the Laplace perturbation by the Gaussian mechanism \citep[e.g.][Theorem 3.22]{dwork2014algorithmic} calibrated with the same sensitivity may lead to smaller constant factors in the rate. 

Furthermore, \Cref{prop:beta_binary_upper} provides a sufficient condition on the separation between the two hypotheses under which reliable testing is possible.  The upper bound 1/3 is arbitrary and can be replaced by any absolute constant in $(0, 1)$. The first term of \eqref{eq:beta_binary_rate}, $d/n$, is the usual non-private rate for simple hypothesis testing. The second term quantifies the additional difficulty induced by the privacy-presrving noise. In particular, the privacy constraint has no effect on the testing rate when $\epsilon \gtrsim \sqrt{d/n}$, whereas for smaller $\epsilon$ the privacy cost dominates.

We now consider a lower bound on the minimax separation rate. Define the minimax separation rate in the binary hypothesis testing problem as 
\begin{align} \label{eq-separation-def-bi}
& r^*(d, n, \epsilon, \delta) = \inf \Big\{ r > 0 \ : \ \forall \beta_0, \beta_1 \in \mathbb{R}^d: \|\beta_0 - \beta_1\|_2 > r,\, \forall \mathbb{P}_{\beta_0} \in \mathcal{P}(\beta_0) \text{ and } \mathbb{P}_{\beta_1} \in \mathcal{P}(\beta_1) \nonumber \\
& \hspace{3.5cm} \exists \ (\epsilon, \delta)\text{-differentially private test } \phi \text{ s.t.~} \mathbb{P}_{\beta_0}^{\otimes n}(\phi=1) + \mathbb{P}_{\beta_1}^{\otimes n}(\phi=0) < 1/3 \Big\}.
\end{align}

\begin{proposition}
\label[proposition]{prop:beta_binary_lower} There exists an absolute constant $c>0$ such that for all $\epsilon>0, \delta \geq 0, n, d \in \mathbb{N}$, for the separation rate defined in \eqref{eq-separation-def-bi}, it holds that 
\begin{equation*}
    r^*(d, n, \epsilon, \delta) \geq c \left\{\frac{d}{n} + \frac{d}{n^2 (\epsilon + \delta)^2} \right\}.
\end{equation*}
\end{proposition}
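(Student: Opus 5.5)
Note that the bound is stated on the squared scale: $r^*$ is compared with $d/n + d/\{n^2(\epsilon+\delta)^2\}$, matching \eqref{eq:beta_binary_rate}. So the plan is to exhibit, for each term, a pair $\beta_0,\beta_1$ with $\|\beta_0-\beta_1\|_2^2$ of that order and distributions $\mathbb{P}_{\beta_0}\in\mathcal{P}(\beta_0)$, $\mathbb{P}_{\beta_1}\in\mathcal{P}(\beta_1)$ for which every $(\epsilon,\delta)$-private test has $\mathbb{P}_{\beta_0}^{\otimes n}(\phi=1)+\mathbb{P}_{\beta_1}^{\otimes n}(\phi=0)\ge 1/3$. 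Since the maximum of two terms is at least half their sum, it suffices to treat the two terms separately.

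\textbf{Construction.} Use time-constant covariates $Z$ drawn uniformly from $\{\pm C_Z e_k/\sqrt{d}\}_{k=1}^d$, or from a scaled Rademacher vector. Take a constant baseline hazard $\lambda_0\equiv c_0\le C_\lambda$ and administrative censoring $C\equiv1$. Assumptions~\ref{assp:covariates} and \ref{assp:indep_atrisk} then hold trivially for bounded $\beta$. For \Cref{assp:eigenvalue}, choose $\beta_0=0$: then $\mu(t,0)=0$ by symmetry, and $G(0)=c\,\mathbb{E}[ZZ^\top]\asymp I_d/d$. For $\beta_1$ with $\|\beta_1\|_2\le 1$, the eigenvalues still lie in $[\rho_-/d,\rho_+/d]$ by continuity in $\beta$; this needs a short perturbation check. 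Set $\beta_1=r u$ with $u=d^{-1/2}(1,\dots,1)$, or simply any unit vector.

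\textbf{Non-private term.} Bound $\mathrm{KL}(\mathbb{P}_{\beta_1}\|\mathbb{P}_{\beta_0})$ for one observation. The law of $(T,\Delta)$ given $Z$ is exponential with rate $c_0e^{\beta^\top Z}$ censored at $1$. The per-observation KL is therefore $\lesssim \mathbb{E}(\beta_1^\top Z)^2 \asymp \|\beta_1\|_2^2/d$, because $|\beta^\top Z|$ is bounded. Hence $\mathrm{KL}(\mathbb{P}^{\otimes n}_{\beta_1}\|\mathbb{P}^{\otimes n}_{\beta_0})\lesssim n r^2/d$, and Le Cam together with Pinsker shows the total error is at least $1/3$ whenever $r^2\le c\,d/n$. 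This gives the $d/n$ term.

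\textbf{Private term.} Here I would use the coupling lower bound for private testing (Acharya et al.; Canonne et al.\ \citep{canonne2019structure}). If there is a coupling of $\mathbb{P}_{\beta_0}^{\otimes n}$ and $\mathbb{P}_{\beta_1}^{\otimes n}$ with expected Hamming distance $D$, then any $(\epsilon,\delta)$-DP test has summed error at least about $1-O(D(\epsilon+\delta))$. Coupling coordinate-wise gives $D=n\,\mathrm{TV}(\mathbb{P}_{\beta_0},\mathbb{P}_{\beta_1})$. Since the likelihood ratio is bounded, $\mathrm{TV}\lesssim \mathbb{E}|\beta_1^\top Z|\lesssim r/\sqrt{d}$. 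Then $n r(\epsilon+\delta)/\sqrt d\le c$ forces errors of at least $1/3$, i.e.\ $r^2\lesssim d/\{n^2(\epsilon+\delta)^2\}$. This is exactly the second term.

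The main obstacle is verifying \Cref{assp:eigenvalue} for $\beta_1\neq0$ uniformly in $d$, because $\mu(t,\beta_1)\neq0$ and the risk set is reweighted over time. I would handle this by using symmetric covariates and small $\|\beta_1\|_2$, so that $e^{\beta_1^\top Z}\in[e^{-C_Z},e^{C_Z}]$ and $G(\beta_1)$ is sandwiched between constant multiples of $\mathrm{Cov}(Z)\asymp I/d$. If $r$ exceeds a constant, the bound is trivial after capping, so the regime $r\lesssim1$ suffices.
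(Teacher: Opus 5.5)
Your proposal is correct and is essentially the paper's proof. Both take $\beta_0=0$ against $\beta_1$ along $d^{-1/2}(1,\dots,1)$ with symmetric bounded covariates; both get the $d/n$ term from a per-observation KL bound of order $r^2/d$ plus Pinsker, and the private term from a coupling/group-privacy bound with per-sample total variation of order $r/\sqrt{d}$. The paper's Lemma 6.1 of Karwa and Vadhan is the same coupling idea as the lemma you cite, and, as in the paper, both arguments only cover $r\lesssim 1$. One slip: covariates uniform on $\{\pm C_Z e_k/\sqrt{d}\}$ give $\mathbb{E}[ZZ^\top]=C_Z^2 I_d/d^2$, which violates Assumption~\ref{assp:eigenvalue}, so use the scaled Rademacher option (or $\pm C_Z e_k$), for which $G(0)\asymp I_d/d$ and $\mathbb{E}(\beta_1^\top Z)^2\asymp r^2/d$ as you claim.
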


Propositions~\ref{prop:beta_binary_upper} and \ref{prop:beta_binary_lower} imply that, in the usual regime of $\delta \lesssim \epsilon$, we have that
\[
    \frac{d}{n}+ \frac{d}{n^2 \epsilon^2} \lesssim r^*(d, n, \epsilon, \delta) \lesssim \frac{d}{n} + \frac{d^2}{n^2\epsilon^2},
\]
which shows an unresolved gap where the optimality of private testing based on the Cox partial likelihood remains open. The proofs of these propositions are deferred to \Cref{app:sec-2}. 

For the general binary hypothesis testing problem, \cite{canonne2019structure} derive an optimal test based on a randomized truncated log-likelihood ratio test. This relies on the log-likelihood decomposing as a sum of independent terms, which is absent in the Cox partial log-likelihood, due to the dependence induced by the at-risk sets. This suggest that the optimal rate for private binary testing for coefficients in the Cox model, or even more generally with unknown nuisance parameters, remains unknown.

Nevertheless, our approach improves upon black-box subsample-and-aggregate methods, as in e.g.~\cite{pena2022subsample} and \cite{kazan2023test}, where the sample complexity is inflated by a factor of $O(1/\epsilon)$ compared to its non-private counterpart. Since the optimal non-private separation rate is $\|\beta_1-\beta_0\|^2_2 \asymp d/n$, such methods would require $\|\beta_1-\beta_0\|^2_2\gtrsim d/(n \epsilon)$, which is slower than the rate achieved by \Cref{prop:beta_binary_upper} for $\epsilon \lesssim 1$.

\subsection{Private identity testing for regression coefficients}\label{sec-3}
In this subsection, we consider the simple versus composite hypothesis testing problem of
\begin{equation} \label{eq-test-good-reg}
    \mathrm{H}_0: \beta^* = \beta_0 \quad \mbox{vs.} \quad \mathrm{H}_1: \|\beta^* - \beta_0\|_2^2 \geq r^2.
\end{equation}
 Our analysis proceeds in two steps. We study a differentially private score test statistic, first by assuming that the trace term $\mathrm{tr}\{G(\beta^*)\}$ used for calibrating the rejection threshold is available. Here, $G(\cdot)$ is a population version of $-\ddot{\ell}_n(\beta^*)/n$, which is defined in \eqref{eq-G-def}, and $\beta^*$ denotes the underlying true parameter of the Cox model. We then construct a private estimator of $\mathrm{tr}\{G(\beta^*)\}$ and show that using it in place of the oracle quantity in the threshold of the test retains the same error guarantee up to constant and logarithmic factors. The proofs of the results in this subsection can be found in \Cref{app:sec-3}. 

\subsubsection{Identity testing via a private score statistic}
In the non-private setting, score tests are a natural choice for testing a simple null on the regression coefficient, since they only require evaluating the gradient of the log partial likelihood at the null value~$\beta_0$. Under $\mathrm{H}_0$, the score is centred, whereas under $\mathrm{H}_1$ its magnitude becomes systematically large. Motivated by this, we first study an idealized private score test in which the true trace term $\mathrm{tr}\{G(\beta^*)\}$ is available to calibrate the rejection threshold, before providing a private estimator of $\mathrm{tr}\{G(\beta^*)\}$ in \Cref{sec-private-trace}. 

\begin{proposition} \label[proposition]{lemma:scoretest_theoretical}
Assume that Assumptions~\ref{assp:covariates}, \ref{assp:indep_atrisk} and \ref{assp:eigenvalue} hold.  For $\beta_0, \beta^* \in \mathbb{B}_{C_\beta}(0) \subset \mathbb{R}^d$, consider the hypothesis test \eqref{eq-test-good-reg}.  Suppose that $\mathrm{tr}\{G(\beta^*)\}$ is available. Define the test
\begin{equation} \label{eq-comp-test-cox}
    \phi(\beta_0) = \mathbbm{1}\bigg\{ \bigg\|\frac{\dot{\ell}_n(\beta_0)}{\sqrt{n}}\bigg\|_2 + \frac{C_{\beta_0}(1+\log(n))}{\sqrt{n}\epsilon}W > \tau \bigg\}, \quad W\sim \mathrm{Laplace}(\mu=0,b=1),
\end{equation}
where $C_{\beta_0} = 4C_Z + \exp(2C_Z\|\beta_0\|_2)(2C_Z+C_Z^2)$. Then for any choice of $\tau \in \mathbb{R}$, $\phi(\beta_0)$ is $(\epsilon, 0)$-differentially private. Further, there exists absolute constants $C_1, C_2, c_1, c_2$ that depend on the absolute constants in \Cref{assp:eigenvalue} such that if $\max\{ \log(nd)^4d^2/n, d^{-1/2}\} \leq C_1$ and the separation in \eqref{eq-test-good-reg} satisfies
\begin{equation}
\label{eq:beta_composite_assumption}
    r^2 \geq C_2 \log(nd)^4 \max \left\{ \frac{d^{3/2}}{n}, \frac{d^4}{n^2}, \frac{d^2}{\min (n^2 \epsilon^2, n^{3/2} \epsilon)}\right\},    
\end{equation} 
choosing the threshold in the test $\phi(\beta_0)$ to be 
\begin{equation} \label{eq:beta_composite_threshold}
    \tau = \mathrm{tr}\{G(\beta^*)\}^{1/2} + c_1d^{-1/2} + c_2 C_{\beta_0}(1+\log(n))n^{-1/2}\epsilon^{-1},
\end{equation} 
we have that $\mathbb{P}_{\mathrm{H}_0}(\phi=1) + \mathbb{P}_{\mathrm{H}_1}(\phi=0) \leq 1/3$.
\end{proposition}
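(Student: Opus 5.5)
The plan has two parts: a sensitivity bound for privacy, and a concentration argument for the type I and type II errors.

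\emph{Privacy.} We reuse the sensitivity computation behind \Cref{prop:beta_binary_upper}. Replace one observation $(T_i,\Delta_i,Z_i)$ and track the change in $\dot\ell_n(\beta_0)$. The direct term $\int\{Z_i-\bar Z\}\dint N_i$ changes by at most $4C_Z$. The replacement also perturbs $\bar Z(t,\beta_0)$ at every event time. At a time with $k$ subjects at risk, the perturbation is at most
\[
\exp(2C_Z\|\beta_0\|_2)(2C_Z+C_Z^2)/k .
\]
Summing over event times with decreasing risk-set sizes gives a harmonic sum $\le 1+\log n$. Hence $\|\dot\ell_n(\beta_0)\|_2$ has sensitivity at most $C_{\beta_0}(1+\log n)$ by the reverse triangle inequality. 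After scaling by $1/\sqrt n$, the Laplace mechanism gives $(\epsilon,0)$-DP, and thresholding at any $\tau$ is post-processing.

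\emph{Type I error.} Under $\mathrm{H}_0$, $\beta_0=\beta^*$. Write $\dot\ell_n(\beta^*)=\sum_i\int\{Z_i-\mu(t,\beta^*)\}\dint M_i+R_n$, where $M_i$ are the counting-process martingales and $R_n=\sum_i\int\{\mu-\bar Z\}\dint N_i$. The main term $S=n^{-1/2}\sum_i\xi_i$ is a sum of i.i.d.\ bounded mean-zero vectors with covariance $G(\beta^*)$. Hence $\mathbb E\|S\|_2^2=\mathrm{tr}\,G(\beta^*)\asymp 1$. Since $\|S\|_2$ is a Lipschitz function of bounded independent vectors, we get $\|S\|_2\le \mathrm{tr}\{G\}^{1/2}+O(d^{-1/2})$ with constant probability, using a fourth-moment or bounded-differences bound on $\|S\|_2^2$. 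For the remainder, a uniform-in-$t$ bound $\sup_t\|\bar Z-\mu\|_2\lesssim \log(nd)\sqrt{d/n}$ (Bernstein plus a grid using the Lipschitz condition of \Cref{assp:covariates}) gives $\|R_n\|_2/\sqrt n$ small. This term is $\lesssim d^{-1/2}$ once $\log(nd)^4d^2/n\le C_1$, which is why that condition appears. The Laplace term is bounded by $c_2C_{\beta_0}(1+\log n)/(\sqrt n\epsilon)$ with probability $\ge 5/6$. Choosing $c_1,c_2$ large gives $\mathbb P_{\mathrm H_0}(\phi=1)\le 1/6$.

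\emph{Type II error.} Under $\mathrm{H}_1$, Taylor-expand $\dot\ell_n(\beta_0)=\dot\ell_n(\beta^*)-\ddot\ell_n(\tilde\beta)(\beta_0-\beta^*)$. We need a lower bound on $-\ddot\ell_n(\tilde\beta)/n$ of order $\rho_-/d$, uniformly on the segment. This comes from concentration of $\ddot\ell_n$ around $-nG$ and continuity of $G$ in $\beta$ on $\mathbb B_{C_\beta}(0)$, which costs the $d^4/n^2$ and logarithmic terms. It gives
\[
\|\dot\ell_n(\beta_0)\|_2/\sqrt n\ge c\sqrt n\,r/d-\|\dot\ell_n(\beta^*)\|_2/\sqrt n .
\]
The second term is $O(1)$ by the previous step. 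We then need $c\sqrt n r/d$ to exceed
\[
2\,\mathrm{tr}\{G(\beta^*)\}^{1/2}+O(d^{-1/2})+O\big(\log n/(\sqrt n\epsilon)\big),
\]
which is implied by \eqref{eq:beta_composite_assumption}. The $d^{3/2}/n$ and $n^{3/2}\epsilon$ terms reflect a sharper treatment via $\|\dot\ell_n(\beta_0)\|^2$ cross terms. Specifically, one expands $\|S+n^{-1/2}H\Delta\|^2=\|S\|^2+2\langle S,H\Delta\rangle/\sqrt n+\|H\Delta\|^2/n$. The cross term has standard deviation $\sqrt{\Delta^\top H G H\Delta/n}$, so detection needs $n r^2/d^2\gtrsim \sqrt{1/d}$ relative to the threshold gap, which gives $d^{3/2}/n$. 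Comparing with the noise $\log n/(\sqrt n\epsilon)$ through the square root gives the $d^2/(n^{3/2}\epsilon)$ term.

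\emph{Main obstacle.} The hardest step is the uniform control of the dependent remainder $R_n$ and of $\ddot\ell_n(\tilde\beta)$, since the at-risk averages $\bar Z$ couple all observations. I would handle this with a discretisation in $t$ combined with matrix Bernstein, absorbing $\log(nd)^4$ factors.
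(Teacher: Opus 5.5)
Your privacy argument and overall architecture match the paper's: a martingale decomposition of the score, a variance bound on $\|A_n\|_2^2$ (your $\|S\|_2^2$) giving $O(d^{-1/2})$ fluctuations, and a quadratic expansion under $\mathrm{H}_1$. However, two steps fail as written.

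\emph{Type II.} The argument you actually carry out, $\|\dot\ell_n(\beta_0)\|_2/\sqrt n\ge c\sqrt n\,r/d-\|\dot\ell_n(\beta^*)\|_2/\sqrt n$, needs $\sqrt n\,r/d\gtrsim\mathrm{tr}\{G(\beta^*)\}^{1/2}\asymp 1$, i.e.\ $r^2\gtrsim d^2/n$. This is \emph{not} implied by \eqref{eq:beta_composite_assumption}. For example, when $\epsilon$ is large and $n\ge d^{5/2}$, the condition allows $r^2\asymp\log(nd)^4d^{3/2}/n$, which is $\ll d^2/n$ once $d\gg\log(nd)^8$. The squared expansion you sketch is the right fix. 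But if $H$ is the data-dependent averaged Hessian from your Taylor step, then $\langle S,H\Delta\rangle$ is neither mean zero nor of variance $\Delta^\top HGH\Delta/n$.

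The paper first makes the shift deterministic. It writes $\dot\ell_n(\beta_0)/\sqrt n=A_n+B_n+C_n+v$, where $v=-\sqrt n\,G(\beta^*)(\beta_0-\beta^*)$ and $C_n=\{n^{-1/2}\int_0^1\ddot\ell_n(t\beta_0+(1-t)\beta^*)\dint t+\sqrt n\,G(\beta^*)\}(\beta_0-\beta^*)$. To control $C_n$, it uses the deterministic multiplicative bound $\|\ddot\ell_n(\beta)-\ddot\ell_n(\beta^*)\|\le(e^{2C_Z\|\beta-\beta^*\|_2}-1)\|\ddot\ell_n(\beta^*)\|$ (as in \eqref{eq:perturbed_Hessian}), together with operator-norm concentration of $\ddot\ell_n$ at $\beta^*$ only. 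This gives $\|C_n\|_2\lesssim r^2\sqrt n/d+r\log(nd)^2$. Then $B_n$ and $C_n$ are removed by the triangle inequality. Only $\|A_n+v\|_2^2=\|A_n\|_2^2+2A_n^\top v+\|v\|_2^2$ is expanded, with $\|v\|_2^2\asymp nr^2/d^2$ and $\mathrm{Var}(A_n^\top v)=v^\top G(\beta^*)v\asymp nr^2/d^3$. The $d^4/n^2$ term comes from comparing the $r\log(nd)^2$ part of $\|C_n\|_2$ with $nr^2/d^2$. This route needs no uniform-in-$\beta$ Hessian concentration, no continuity of $G$, and no eigenvalue bound away from $\beta^*$.

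\emph{Type I remainder.} With main term $\sum_i\int(Z_i-\mu)\dint M_i$, your $R_n=\sum_i\int(\mu-\bar Z)\dint N_i$ does not complete the identity. It is off by $\sum_i\int(\mu-\bar Z)Y_ie^{\beta^{*\top}Z_i}\dint\Lambda_0=-\sum_i\int(Z_i-\mu)Y_ie^{\beta^{*\top}Z_i}\dint\Lambda_0$. This is a sum of $n$ i.i.d.\ mean-zero terms, so it is $O_p(1)$ after dividing by $\sqrt n$, not negligible.

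Separately, a pathwise bound (at most $n$ events times $\sup_t\|\bar Z-\mu\|_2\lesssim\log(nd)\sqrt{d/n}$) only gives $\|R_n\|_2/\sqrt n\lesssim\sqrt d\log(nd)$, not $O(d^{-1/2})$. Since $\sum_i\int(Z_i-\bar Z)Y_ie^{\beta^{*\top}Z_i}\dint\Lambda_0=0$, the correct remainder is $B_n=n^{-1/2}\sum_i\int(\mu-\bar Z)\dint M_i$. Its smallness comes from the predictable variation: $\bar Z-\mu$ is predictable, so $\mathbb E\|B_n\|_2^2\lesssim\mathbb E\sup_t\|\bar Z(t)-\mu(t)\|_2^2\lesssim\log(nd)^2d/n$. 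Markov's inequality then gives $\|B_n\|_2\le c_1/(2\sqrt d)$, and this is exactly where $\log(nd)^4d^2/n\le C_1$ enters.

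Also, of your two options for $\|S\|_2^2$, only the fourth-moment/Chebyshev route works. Bounded differences gives $O(1)$ fluctuations, not $O(d^{-1/2})$.
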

Rather than privatising the classical score statistic $\dot{\ell}_n(\beta_0)^\top \{V(\beta_0)\}^{-1}\dot{\ell}_n(\beta_0)$, we work directly with the Euclidean norm of the score and calibrate the rejection threshold by $\mathrm{tr}\{G(\beta^*)\}$. Our approach avoids the need to privately estimate the inverse of a $d$-dimensional matrix. Differentially private inference for the Cox model is particularly challenging because the log partial likelihood is formed from weighted sums over the at-risk sets, and hence its gradient and Hessian lack the simpler structure of independent and identically distributed terms. 

Existing work such as \cite{avella2020privacy} and \cite{avella2023differentially} studies the asymptotic validity of classical procedures in more standard M-estimation settings. In contrast, \Cref{lemma:scoretest_theoretical} gives an explicit finite-sample detection guarantee of a differentially private score-type test for \eqref{eq-test-good-reg}, by showing how the separation required for our procedure depends on the dimension, sample size and privacy level. There are three terms in the separation condition \eqref{eq:beta_composite_assumption}: the first is the non-private rate; the second term is from only imposing the eigenvalue conditions of \Cref{assp:eigenvalue} at the true $\beta^*$, and is dominated by the first term when $d^{5/2} \lesssim n$; and the third term is for having sufficient signal relative to the privacy-preserving noise. 

The question of whether the separation rate in \Cref{lemma:scoretest_theoretical} is optimal over all differentially private tests remains open.  We have the following non-private lower bound.

\begin{lemma} \label[lemma]{lemma:beta_composite_lower}
Fix any $\beta_0 \in \mathbb{R}^d$. Suppose we have $\{(T_i, \Delta_i, Z_i)\}_{i=1}^n$ drawn from some distribution in $\mathcal{P}(\beta^*)$ and consider the testing problem in \eqref{eq-test-good-reg}. There exists some absolute constant $c > 0$ such that if $\|\beta_0 - \beta^*\|_2^2 \leq c d^{3/2}/n$, then there is no test $\phi$ that satisfies $\mathbb{P}_{\mathrm{H}_0}(\phi=1) + \mathbb{P}_{\mathrm{H}_1}(\phi=0) < 1/3$. 
\end{lemma}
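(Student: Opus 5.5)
We will prove \Cref{lemma:beta_composite_lower} with the standard Ingster-type mixture argument: construct a simple null and a prior over composite alternatives, then bound the $\chi^2$-divergence between the null law and the mixture. We work in a submodel of $\mathcal{P}(\cdot)$ where the Cox likelihood factorises over individuals. Take time-constant covariates $Z$ uniform on $\{\pm C_Z d^{-1/2}\}^d$ (Rademacher scaled), no censoring before $1$ apart from administrative censoring at $t=1$, and constant baseline hazard $\lambda_0\equiv 1$. Then $(T,\Delta)$ given $Z$ has an explicit density, and $\mathbb{P}(Y(1)=1)\ge e^{-e^{C_\beta C_Z}}$ is bounded below. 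We also check \Cref{assp:eigenvalue}: for $\beta^*$ near $\beta_0=0$, $G(\beta^*)\approx \mathbb{E}[\int_0^1 Y(t)\,dt]\,\mathrm{Cov}(Z)\asymp I_d/d$. This is the reason for the $d^{-1/2}$ scaling of $Z$. It suffices to take $\beta_0=0$, or to reparametrise the baseline for general $\beta_0$.

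For the alternatives, set $\beta_\xi=\beta_0+(r/\sqrt d)\,\xi$ with $\xi$ uniform on $\{\pm1\}^d$, so $\|\beta_\xi-\beta_0\|_2=r$. The per-observation likelihood ratio is
\[
L_\xi(T,\Delta,Z)=\exp\{\Delta\,\beta_\xi^\top Z-(e^{\beta_\xi^\top Z}-1)T\},
\]
writing $\beta_0=0$. The risk of any test is bounded below by $1-\mathrm{TV}(P_0^{\otimes n},\mathbb{E}_\xi P_\xi^{\otimes n})\ge 1-\tfrac12\sqrt{\chi^2}$. We then compute
\[
1+\chi^2=\mathbb{E}_{\xi,\xi'}\bigl(\mathbb{E}_0[L_\xi L_{\xi'}]\bigr)^n .
\]
Conditioning on $Z$ and integrating over $(T,\Delta)$ under the null exponential law gives $\mathbb{E}_0[L_\xi L_{\xi'}\mid Z]=1+h(a,b)$, with $a=\beta_\xi^\top Z$ and $b=\beta_{\xi'}^\top Z$, both of order $r\,C_Z$ and small. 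Taylor expanding, the first-order terms vanish, and the second-order term is $\kappa\,ab$ for some constant $\kappa>0$ (the Fisher information of the one-dimensional exponential model). Using $\mathbb{E}[ZZ^\top]=C_Z^2 I_d/d$, we obtain
\[
\mathbb{E}_0[L_\xi L_{\xi'}]=1+\kappa'\,\frac{r^2}{d^2}\,\xi^\top\xi'+O(r^3).
\]

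The remaining step is the Rademacher computation. We have
\[
\mathbb{E}_{\xi,\xi'}\exp\Bigl(n\kappa' \tfrac{r^2}{d^2}\xi^\top\xi' + nO(r^3)\Bigr)\le \exp\Bigl(\tfrac{n^2\kappa'^2r^4}{2d^3}+nO(r^3)\Bigr).
\]
This is small once $r^2\le c\,d^{3/2}/n$, giving $\chi^2\le 1/10$ and hence a risk above $1/3$. This yields exactly the claimed $d^{3/2}/n$ threshold.

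The main obstacle is the Taylor remainder. A cubic remainder of size $nr^3$ is not negligible when $r^2\asymp d^{3/2}/n$ unless $d^{9/4}\lesssim n^{1/2}$. To handle this, I would instead expand exactly in $Z$ using the symmetry of the Rademacher design. Odd powers of $a$ alone, or of $b$ alone, average to zero over $Z$ at leading orders. The genuine remainder is then of order $n\,\mathbb{E}[a^2b^2]+n\,\mathbb{E}[a^4]\lesssim n r^4/d^2\cdot(\text{const})$ plus mixed terms. This is $O(d^{-1/2})$-small at the critical $r$, and it can alternatively be absorbed by restricting to the regime $d^{3/2}\lesssim n$, where the lemma is non-vacuous. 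A secondary check is that every $\beta_\xi$ satisfies the eigenvalue condition of \Cref{assp:eigenvalue} uniformly. This follows because $\|\beta_\xi-\beta_0\|_2$ is small, so $G(\beta_\xi)$ is a small perturbation of $G(\beta_0)$.
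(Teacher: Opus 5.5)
Your proposal is correct and follows essentially the same argument as the paper. Both use a uniform prior on sign vectors for the alternatives $(r/\sqrt{d})\xi$ and the bound $1+\chi^2=\mathbb{E}_{\xi,\xi'}(\mathbb{E}_0[L_\xi L_{\xi'}])^n$, computed conditionally on $Z$. Both then take a third-order Taylor expansion whose cubic terms vanish under $Z\mapsto -Z$, leaving an $O(nr^4/d^2)=O(d/n)$ remainder at the critical radius, and finish with $\cosh(\lambda)^d\le \exp(d\lambda^2/2)$ to get $r^2\asymp d^{3/2}/n$.

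The differences are cosmetic: the paper uses uniform rather than Rademacher covariates, and it removes censoring by data processing instead of keeping administrative censoring in the likelihood. That choice of the paper also makes $\mathbb{E}_{\beta_0}[L_\xi L_{\xi'}\mid Z]=e^{a+b}/(e^a+e^b-1)$ independent of $\beta_0$ by rescaling time, which gives a clean route to general $\beta_0$. Your suggestion of absorbing $e^{\beta_0^\top Z}$ into the baseline does not work, since that factor depends on $Z$. The paper itself only treats $\beta_0=0$.
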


\Cref{lemma:beta_composite_lower} shows that the non-private component of the separation, namely the term of order~$d^{3/2}/n$, is unavoidable, so in this sense the non-private part of \Cref{lemma:scoretest_theoretical} is nearly minimax optimal. The optimal dependence of the privacy term is currently unknown. To the best of our knowledge, matching upper and lower bounds for simple-null composite-alternative testing under differential privacy constraints have only been established for a small number of settings, notably for discrete distributions \citep[e.g.][]{acharya2018discretedists}, for Gaussian mean testing \citep{pmlr-v178-narayanan22a}, and, more recently, for the white-noise-with-drift model \citep{cai2024federatednonparametrichypothesistesting}. In these works, the lower bounds rely on delicate coupling arguments, and it is not clear how to extend such constructions to the present survival setting, or even to simpler settings with a response variable. Establishing the optimal private separation rate for identity testing in the Cox model therefore remains an interesting open problem.

\subsubsection{Implementation via private trace estimation} \label{sec-private-trace}

The test in \Cref{lemma:scoretest_theoretical} is stated in terms of the population quantity $\mathrm{tr}\{G(\beta^*)\}$ and is therefore not directly implementable. We now show that one can replace this oracle quantity by a differentially private estimator without changing the separation needed for the testing guarantee beyond constant and logarithmic factors. To do so, we consider the negative normalized Hessian of the partial log-likelihood as an empirical version of $G(\beta_0)$: 
\begin{equation} \label{eq:truncated_Hessian}
    H(D; \beta_0) = \frac{-\ddot{\ell}_n(\beta_0)}{n} =\frac{1}{n} \sum_{i=1}^n \int_0^1 \frac{\sum_{j=1}^n Y_j(t) \exp(\beta_0^\top Z_j(t))\{Z_j(t) - \bar{Z}(t, \beta_0)\}^{\otimes 2}}{ \sum_{j=1}^n Y_j(t) \exp(\beta_0^\top Z_j(t))} \dint N_i(t). 
\end{equation}
We show that the sensitivity of $\mathrm{tr}\{H(D; \beta_0)\}$ is of order $\log(n)/n$ in \Cref{lemma:trace_sensitivity} of \Cref{sec-trace-sensitivity}. This allows us to construct a private estimator of $\mathrm{tr}\{G(\beta_0)\}$ and we show the theoretical guarantees of using this in place of the true $\mathrm{tr}\{G(\beta^*)\}$ in the rejection threshold \eqref{eq:beta_composite_threshold}.  
\begin{corollary} \label[corollary]{lemma:trace_plugin}
Let $D_1=\{(T_{1,i}, \Delta_{1,i}, Z_{1,i})\}_{i=1}^{n/2}$ and $D_2=\{(T_{2,i}, \Delta_{2,i}, Z_{2,i})\}_{i=1}^{n/2}$ be i.i.d.~and $D=D_1 \cup D_2$. Let $H(D_1; \beta_0)$ be as defined in \eqref{eq:truncated_Hessian} and define an estimator for its trace as  \begin{equation}\label{eq:trace_estimator_def}
        T(D_1; \beta_0)= \max\left\{0, \mathrm{tr}\{H(D_1; \beta_0)\} + \frac{K(n/2, \beta_0)}{\epsilon} W'\right\},   \quad W' \sim \mathrm{Laplace}(\mu=0, b=1)
\end{equation}
where $K(n, \beta_0) \asymp \log(n)/n$ as defined in \eqref{eq:trace_sensitivity} in \Cref{sec-trace-sensitivity}. Under the same assumptions and settings as in \Cref{lemma:scoretest_theoretical} (up to constants and logarithmic factors), replacing $\mathrm{tr}\{G(\beta^*)\}$ in \eqref{eq:beta_composite_threshold} with $T(D_1; \beta_0)$, the privacy guarantees with respect to $D$ and the test error controls in \Cref{lemma:scoretest_theoretical} still hold for the test $\phi$ constructed using $D_2$ and noise $W \perp W'$.
\end{corollary}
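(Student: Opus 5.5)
Privacy and error control are handled separately, and the sample split makes both nearly immediate from earlier results.

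\emph{Privacy.} By \Cref{lemma:trace_sensitivity}, $K(n/2,\beta_0)$ bounds the sensitivity of $\mathrm{tr}\{H(D_1;\beta_0)\}$. The Laplace mechanism therefore makes $\mathrm{tr}\{H(D_1;\beta_0)\}+K(n/2,\beta_0)W'/\epsilon$ an $(\epsilon,0)$-differentially private function of $D_1$. The truncation $\max\{0,\cdot\}$ is post-processing, so $T(D_1;\beta_0)$ is also $(\epsilon,0)$-private. By \Cref{lemma:scoretest_theoretical}, for every fixed $\tau$ the test on $D_2$ is $(\epsilon,0)$-private with respect to $D_2$. Neighbouring $D,D'$ differ in exactly one of $D_1$ or $D_2$. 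If the change is in $D_1$, the output is post-processing of $T$, with the $D_2$ part held fixed and $W$ independent. If it is in $D_2$, we condition on $T$ and apply the fixed-$\tau$ guarantee, then integrate over $T$. This is parallel composition, and it gives $(\epsilon,0)$-privacy with respect to $D$.

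\emph{Error control.} Conditional on $D_1$ and $W'$, the threshold $\hat\tau=T^{1/2}+c_1d^{-1/2}+c_2C_{\beta_0}(1+\log n)n^{-1/2}\epsilon^{-1}$ is deterministic and independent of $D_2$. The proof of \Cref{lemma:scoretest_theoretical} is assumed to show that type I error is controlled for any threshold at least $\tau$ in \eqref{eq:beta_composite_threshold}. It is also assumed to show that type II error is controlled for any threshold at most $\tau$ plus a margin of the order of the signal $\sqrt n\,r^2$ terms. I would then establish the event
\[
\mathcal E=\Big\{\big|T(D_1;\beta_0)^{1/2}-\mathrm{tr}\{G(\beta^*)\}^{1/2}\big|\le c\,d^{-1/2}+\tilde C\log(n)^{3/2}(n\epsilon)^{-1/2}\Big\},
\]
with $\mathbb P(\mathcal E)\ge 1-1/100$, and enlarge $c_1,c_2$ and the log factors to absorb the deviation. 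Under $\mathrm H_0$ we have $\beta_0=\beta^*$, and under $\mathrm H_1$ the estimator targets $\mathrm{tr}\{G(\beta_0)\}$.

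\emph{Main obstacle: bounding $|T^{1/2}-\mathrm{tr}\{G(\beta^*)\}^{1/2}|$.} I would split it into three terms.
\begin{itemize}
\item \textbf{Laplace noise.} This is $O_P(\log n/(n\epsilon))$. Since $\mathrm{tr}\,G\asymp 1$ by \Cref{assp:eigenvalue} (trace between $\rho_-$ and $\rho_+$), the square root is Lipschitz near the target, and truncation at $0$ only helps.
\item \textbf{Sampling error} $\mathrm{tr}\,H(D_1;\beta_0)-\mathrm{tr}\,G(\beta_0)$. I would reuse the concentration of the empirical Hessian from the proof of \Cref{lemma:scoretest_theoretical}: martingale concentration for $\int\cdot\,\mathrm dM_i$ plus uniform-in-$t$ control of $\bar Z$ versus $\mu$ using \Cref{assp:covariates} and $p_0$. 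This gives $O(\log(nd)\sqrt{d/n})$ for the trace, because the trace is a scalar bounded by $4C_Z^2$, and this is $\ll d^{-1/2}$ under $\log(nd)^4d^2/n\le C_1$.
\item \textbf{Under $\mathrm H_1$: $\mathrm{tr}\,G(\beta_0)$ versus $\mathrm{tr}\,G(\beta^*)$.} This is a Lipschitz bound in $\beta$ on the ball $\mathbb B_{C_\beta}(0)$, giving $O(\|\beta_0-\beta^*\|_2)$. It can be large, so I would not require closeness here. Instead I would use that the power argument only needs $\hat\tau\lesssim \sqrt{\mathrm{tr}\,G(\beta^*)}+\text{margin}$. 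Since $T$ is bounded by a constant, $\hat\tau=O(1)$, and under \eqref{eq:beta_composite_assumption} the score norm grows like $\sqrt n\,\lambda_{\min}r\gtrsim \sqrt n\,r/d$, which exceeds any constant threshold.
\end{itemize}
I expect this last step, making the power proof robust to a misspecified but bounded threshold, to be the crux.
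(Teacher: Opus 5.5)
Your privacy argument (Laplace mechanism with \Cref{lemma:trace_sensitivity}, post-processing for the truncation, parallel composition over the split) matches the paper. So does your type-I argument: a high-probability event on which the plug-in threshold stays within the $c_1d^{-1/2}$ margin when $\beta_0=\beta^*$.

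The gap is in the type-II step you flagged as the crux. The claim that the score norm ``exceeds any constant threshold'' is false at the separation boundary. Under $\mathrm{H}_1$ the statistic is $\|A_n+B_n+C_n+v\|_2$ with $v=-\sqrt n\,G(\beta^*)(\beta_0-\beta^*)$, so $\|v\|_2\asymp\sqrt n\,r/d$, while $\|A_n\|_2\approx\mathrm{tr}\{G(\beta^*)\}^{1/2}\asymp 1$. When the $d^{3/2}/n$ term dominates \eqref{eq:beta_composite_assumption}, $\|v\|_2^2\asymp\log(nd)^4d^{-1/2}$. For example, take $d=n^{1/3}$ and $\epsilon=1$. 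Then all conditions of \Cref{lemma:scoretest_theoretical} hold for large $n$, yet $\|v\|_2^2\asymp\log(nd)^4n^{-1/6}\to0$.

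The test has power only because $\|A_n+v\|_2^2$ exceeds $\mathrm{tr}\{G(\beta^*)\}$ by roughly $\|v\|_2^2\asymp nr^2/d^2$. This beats the $O(d^{-1/2})$ fluctuation of $\|A_n\|_2^2$; it is exactly the comparison in \eqref{eq:beta_comp_shifted}. So the requirement on the threshold is additive: $\hat\tau\le\mathrm{tr}\{G(\beta^*)\}^{1/2}+O(nr^2/d^2)$, a margin that can be $o(1)$. Knowing only $\hat\tau=O(1)$ gives no power. Moreover, $T$ is bounded only with high probability, because of the Laplace noise.

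You therefore cannot sidestep closeness of $T(D_1;\beta_0)$ to $\mathrm{tr}\{G(\beta^*)\}$ under $\mathrm{H}_1$. What you need is that the error is small relative to the signal $nr^2/d^2$. The paper does this in \Cref{lemma:trace_estimator}: on an event of probability at least $1-\alpha-O(1/n)$,
\[
\bigl|T(D_1;\beta_0)-\mathrm{tr}\{G(\beta^*)\}\bigr|\lesssim \frac{\log(dn)^2}{\sqrt n}+r\Bigl(\frac{d\log(dn)^2}{\sqrt n}+1\Bigr)+\frac{K(n/2,\beta_0)\log(1/\alpha)}{\epsilon}.
\]
This comes from comparing $\mathrm{tr}\,\ddot\ell_n(\beta_0)$ with $\mathrm{tr}\,\ddot\ell_n(\beta^*)$ deterministically, via the factor $e^{2C_Zr}-1$ as in \eqref{eq:perturbed_Hessian}, and then using concentration at $\beta^*$ (\Cref{lemma:hessian_convergence} and \Cref{lemma:trace_convergence}). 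The paper then checks that each term is $\lesssim nr^2/d^2$.

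The $O(r)$ bias you wanted to avoid is absorbed precisely because $r\gtrsim d^2/n$ up to logs, which is the $d^4/n^2$ term in \eqref{eq:beta_composite_assumption}. Working at $\beta^*$ also fixes a smaller issue in your sampling-error bullet. The compensator of $N_i$ involves $\beta^*$, so $\mathrm{tr}\,H(D_1;\beta_0)$ does not concentrate around $\mathrm{tr}\{G(\beta_0)\}$ as defined in \eqref{eq-G-def}, although that discrepancy is again only $O(r)$.
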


\Cref{lemma:trace_plugin} shows that the oracle test of \Cref{lemma:scoretest_theoretical} is not merely a conceptual benchmark and provides a data-driven procedure that retains the same guarantee up to constants and logarithmic factors. Our construction can be generalized beyond the Cox model; its main ingredients are a score-type statistic whose non-private fluctuation can be controlled by a scalar calibration term, and an empirical analogue of that calibration term with global sensitivity of order $n^{-1}$ (up to log factors). Whenever this structure is available, the same strategy can be used to turn an oracle private test into a fully implementable differentially private procedure. In particular, the argument should extend to other semi-parametric or generalized linear models with bounded covariates and suitably controlled variance weights.

\section{Private two-sample testing for cumulative hazards}\label{sec-4}
In this section, we consider a distributed two-sample testing problem for right-censored data, where the samples are on two servers. For $k \in \{1, 2\}$,  Server $k$ holds the dataset $D_k = \{(T_{k,i}, \Delta_{k,i})\}_{i=1}^{n_k}$, where the event times $\tilde{T}_{k, i}$'s are independent and identically distributed from a distribution with cumulative hazard $\Lambda_k(t) = \int_0^t \lambda_k(s) \dint s$. We continue to assume that the distributions for the event and censoring times satisfy \Cref{assp:indep_atrisk} from \Cref{sec:assumptions} in a covariate-less setting, and denote this set of distributions by $\mathcal{C}$.  Our goal is to test 
\begin{equation} \label{eq-two-sample-test}
    \mathrm{H}_0: \Lambda_1(t) = \Lambda_2(t), \, t \in [0, 1] \quad \mbox{vs.} \quad \mathrm{H}_1: \|\Lambda_1 - \Lambda_2\|_{\infty} > r,
\end{equation}
where for a function $f:\mathbb{R} \rightarrow \mathbb{R}$ we denote $\|f\|_{\infty} = \sup_{t \in [0, 1]} |f(t)|$. To satisfy the distributed differential privacy constraint, we restrict the class of tests to
\begin{equation*}
    \Phi_{n_1, n_2, \epsilon_1, \epsilon_2, \delta_1, \delta_2} = \left\{\phi(M_1, M_2): M_k \text{ is $(\epsilon_k, \delta_k)$-differentially private w.r.t.~} D_k, \,\mathrm{Range}(\phi)=\{0, 1\} \right\}.
\end{equation*}

\subsection{Testing guarantees and optimality}

For the hypotheses in \eqref{eq-two-sample-test}, we construct tests of the form
\begin{equation} \label{eq-two-sample-test-def}
    \phi = \mathbbm{1}\left\{\|\widehat{\Lambda}_1 - \widehat{\Lambda}_2\|_{\infty} > \tau \right\},
\end{equation}
where $\tau>0$ is a threshold to be specified and $\widehat{\Lambda}_1(\cdot)$ and $\widehat{\Lambda}_2(\cdot)$ are outputs of \Cref{alg:DP-NA} in \Cref{app:DP-NA_alg}, applied to $(D_1, \epsilon_1, \delta_1)$ and $(D_2, \epsilon_2, \delta_2)$ respectively.  

It is shown in \cite{FDPCox} that the output of \Cref{alg:DP-NA} is minimax rate optimal, up to poly-logarithmic factors, for estimating $\{\Lambda(t)\}_{t \in [0, 1]}$ under sup-norm loss. \Cref{prop:cumulative_upper} below gives a sufficient condition on the separation rate under which $\phi$ has non-trivial testing power.  

\begin{proposition} \label[proposition]{prop:cumulative_upper}
Let
    \begin{equation} \label{eq:two_sample_threshold}
        \tau = c\left\{\frac{1}{\sqrt{n_1}} + \frac{\log_2(\min\{\sqrt{n_1}, n_1\epsilon_1\})^2 \log(1/\delta_1)}{n_1 \epsilon_1} + \frac{1}{\sqrt{n_2}} + \frac{\log_2(\min\{\sqrt{n_2}, n_2\epsilon_2\})^2 \log(1/\delta_2)}{n_2 \epsilon_2}\right\}     
    \end{equation}
    be the threshold in \eqref{eq-two-sample-test-def} for some absolute constant $c>0$.  

    We have that (i) the test $\phi$ is in $\Phi_{n_1, n_2, \epsilon_1, \epsilon_2, \delta_1, \delta_2}$; and (ii) under \Cref{assp:indep_atrisk}, there exists an absolute constant $C>0$ such that if the separation $r$ in \eqref{eq-two-sample-test} satisfies
    \begin{equation} \label{eq-two-sample-upper-rate}
    r \geq C \left(\frac{\mathrm{polylog}(n_1, \epsilon_1, \delta_1)}{\min\{n_1^{1/2}, n_1 \epsilon_1\}} +  \frac{\mathrm{polylog}(n_2, \epsilon_2, \delta_2)}{\min\{n_2^{1/2}, n_2 \epsilon_2\}}\right),
    \end{equation}
    then $\mathbb{P}_{\mathrm{H}_0}(\phi = 1) + \mathbb{P}_{\mathrm{H}_1} (\phi = 0) < 1/3$.
\end{proposition}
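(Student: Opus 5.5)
Part (i) follows from the construction and post-processing. By the privacy guarantee of \Cref{alg:DP-NA}, established in \cite{FDPCox}, the output $\widehat{\Lambda}_k$ is $(\epsilon_k,\delta_k)$-differentially private with respect to $D_k$. Take $M_k=\widehat{\Lambda}_k$. The test $\phi$ in \eqref{eq-two-sample-test-def} is a deterministic $\{0,1\}$-valued function of $(M_1,M_2)$, so $\phi\in\Phi_{n_1,n_2,\epsilon_1,\epsilon_2,\delta_1,\delta_2}$. No further argument is needed, since the two servers add independent noise and each mechanism touches only its own data.

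For part (ii), the key input is a high-probability sup-norm estimation bound for \Cref{alg:DP-NA}, which I would extract from the analysis in \cite{FDPCox}. Under \Cref{assp:indep_atrisk}, with probability at least $1-1/12$, I want
\[
\|\widehat{\Lambda}_k-\Lambda_k\|_\infty \le \frac{c}{2}\left\{\frac{1}{\sqrt{n_k}}+\frac{\log_2(\min\{\sqrt{n_k},n_k\epsilon_k\})^2\log(1/\delta_k)}{n_k\epsilon_k}\right\}=:\frac{\tau_k}{2},
\]
where $c$ is the absolute constant in \eqref{eq:two_sample_threshold}. I would obtain this from three pieces:
\begin{itemize}
\item[(a)] a sup-norm deviation bound for the non-private Nelson--Aalen estimator of order $n_k^{-1/2}$. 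This uses a martingale maximal inequality (Lenglart, or Bernstein for martingales together with a chaining/discretisation argument). It relies on $\mathbb{P}(Y(1)=1)\ge p_0$, which keeps the at-risk proportion bounded below uniformly on $[0,1]$ with high probability by a Hoeffding bound, and on $\lambda_0\le C_\lambda$.
\item[(b)] a bound on the privacy noise. I expect \Cref{alg:DP-NA} to be tree-based or binary-mechanism-style, given the $\log_2(\cdot)^2$ factor. A union bound over the $O(\log)$ tree levels of Gaussian or Laplace noise then gives the second term, with $\log(1/\delta_k)$ coming from Gaussian calibration and its tail.
\item[(c)] a discretisation or bias error. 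The grid resolution is chosen as $\min\{\sqrt{n_k},n_k\epsilon_k\}$, so this error is absorbed into the other two terms using the bounded hazard.
\end{itemize}
If \cite{FDPCox} already states this as a theorem with an explicit constant and a failure probability that can be tuned by constants, I would simply invoke it.

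Given this bound, work on the intersection of the two good events, which has probability at least $5/6$ by independence (or a union bound). Set $\tau=\tau_1+\tau_2$.
\begin{itemize}
\item Under $\mathrm{H}_0$, $\Lambda_1=\Lambda_2$, so by the triangle inequality $\|\widehat{\Lambda}_1-\widehat{\Lambda}_2\|_\infty\le(\tau_1+\tau_2)/2<\tau$. Hence $\mathbb{P}_{\mathrm{H}_0}(\phi=1)\le 1/6$.
\item Under $\mathrm{H}_1$, the reverse triangle inequality gives $\|\widehat{\Lambda}_1-\widehat{\Lambda}_2\|_\infty\ge r-\tau/2$. This exceeds $\tau$ whenever $r>3\tau/2$. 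Using $n_k^{-1/2}+\mathrm{polylog}/(n_k\epsilon_k)\lesssim \mathrm{polylog}/\min\{n_k^{1/2},n_k\epsilon_k\}$, this is implied by \eqref{eq-two-sample-upper-rate} for $C$ large. Hence $\mathbb{P}_{\mathrm{H}_1}(\phi=0)\le 1/6$.
\end{itemize}
The two errors sum to at most $1/3$, and strict inequality follows by taking the failure probabilities slightly smaller.

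The main obstacle is the first step of part (ii): obtaining a sup-norm concentration bound for \Cref{alg:DP-NA} in high-probability form, with a failure probability that is a small absolute constant, rather than the in-expectation minimax statement of \cite{FDPCox}. If only an expectation bound $\mathbb{E}\|\widehat{\Lambda}_k-\Lambda_k\|_\infty\lesssim\tau_k$ is available, I would fall back on Markov's inequality, choosing $c$ large enough (for example $24$ times the constant in the expectation bound) that each failure probability is at most $1/12$. That suffices for the constant-level error bound required here.
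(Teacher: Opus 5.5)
Your proposal is correct and follows the paper's proof: privacy via Theorem 5 of \cite{FDPCox} plus post-processing, then the triangle and reverse-triangle inequalities with a union bound over the two servers. The paper uses exactly your fallback, namely Markov's inequality applied to the in-expectation sup-norm bound extracted from the proof of Theorem 5 of \cite{FDPCox} and absorbed by choosing the constants in $\tau$ and $C$ large, so the high-probability derivation in (a)--(c) is not needed.
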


\Cref{prop:cumulative_lower} shows a matching lower bound up to poly-logarithmic factors. 

\begin{proposition} \label[proposition]{prop:cumulative_lower}
 The minimax separation rate of the test \eqref{eq-two-sample-test} is lower bounded by 
\begin{align*}
    r^*(n_1, n_2, \epsilon_1, \epsilon_2, \delta_1,  \delta_2) &= \inf \bigg\{ r>0;\, \exists \ \phi \in \Phi_{n_1, n_2, \epsilon_1, \epsilon_2, \delta_1, \delta_2} \\
    \mathrm{\,\,s.t.\,\,}   \forall &\mathbb{P}_{\Lambda_1}, \mathbb{P}_{\Lambda_2} \in \mathcal{C}:\sup_{t \in [0, 1]} |\Lambda_1(t) - \Lambda_2(t)| > r, \,\mathbb{P}_{\mathrm{H}_0}(\phi=1) + \mathbb{P}_{\mathrm{H}_1}(\phi=0) < \tfrac{1}{3}
     \bigg\}\\
     &\gtrsim \frac{1}{\min\{n_1^{1/2}, n_1 (\epsilon_1 + \delta_1)\}} + \frac{1}{\min\{n_2^{1/2}, n_2 (\epsilon_2 + \delta_2)\}}.
\end{align*}
\end{proposition}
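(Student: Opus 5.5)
The lower bound splits into a non-private term and a privacy term. Because the minimax rate is defined as an infimum over tests that must succeed for every pair in $\mathcal{C}$, it is enough to exhibit, for each term, a null pair and an alternative pair that no test in $\Phi_{n_1,n_2,\epsilon_1,\epsilon_2,\delta_1,\delta_2}$ can separate. The final bound is then the maximum of the four resulting rates, which is at least half their sum. By symmetry, I will perturb only the distribution on Server~1 and keep $\Lambda_2=\Lambda_1$ fixed under both hypotheses. Server~2's output $M_2$ then has the same law under both hypotheses and carries no information, so the problem reduces to a one-sample private testing problem on $D_1$. The same argument with the roles of the servers swapped gives the $n_2$ terms.

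\textbf{Construction.} Take no censoring, or censoring at time $1$ with a constant baseline hazard, so that $\mathbb{P}(Y(1)=1)\ge p_0$ and $\lambda_0\le C_\lambda$ hold. Under $\mathrm{H}_0$, set $\lambda_1=\lambda_2\equiv 1$. Under $\mathrm{H}_1$, set $\lambda_1\equiv 1+2r$ and $\lambda_2\equiv 1$. Then $\|\Lambda_1-\Lambda_2\|_\infty=2r>r$, and for $r$ small the alternative stays in $\mathcal{C}$. Each observation $(T,\Delta)$ has a density in the two models. A direct computation gives, for a single observation, a Hellinger or KL divergence of order $r^2$ and a total variation distance of order $r$.

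\textbf{Non-private term.} The privacy mechanisms are post-processing of the data, so any private test is in particular a test based on $D_1$. By Le Cam's two-point method, the sum of the type I and type II errors is at least $1-\mathrm{TV}(\mathbb{P}_0^{\otimes n_1},\mathbb{P}_1^{\otimes n_1})$. Using $\mathrm{TV}\le\sqrt{n_1\mathrm{KL}/2}\lesssim\sqrt{n_1}\,r$, this is at least $1/3$ whenever $r\le c/\sqrt{n_1}$.

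\textbf{Private term.} I will use the coupling lemma for differentially private tests, in the form of Acharya et al.\ (2018) or Canonne et al.\ (2019). If there is a coupling of $\mathbb{P}_0^{\otimes n_1}$ and $\mathbb{P}_1^{\otimes n_1}$ whose expected Hamming distance is $D$, then any $(\epsilon,\delta)$-DP mechanism satisfies
\[
\mathrm{TV}(M\circ\mathbb{P}_0^{\otimes n_1},M\circ\mathbb{P}_1^{\otimes n_1})\lesssim D(\epsilon+\delta).
\]
The proof is a group-privacy argument combined with Markov's inequality on the Hamming distance. Coordinatewise maximal coupling gives $D=n_1\mathrm{TV}(\mathbb{P}_0,\mathbb{P}_1)\lesssim n_1 r$. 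So whenever $r\le c/\{n_1(\epsilon_1+\delta_1)\}$, the distributions of $(M_1,M_2)$ under the two hypotheses are within total variation $1/3$, and every $\phi\in\Phi$ errs with probability at least $1/3$.

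\textbf{Main obstacle.} The delicate step is the private bound. Group privacy degrades as $e^{k\epsilon}$ and as $k e^{k\epsilon}\delta$, so the argument needs $n_1 r(\epsilon+\delta)\lesssim 1$ for the Markov step to keep the Hamming distance $k$ small. I expect this to be manageable by invoking the cited coupling lemma directly rather than re-deriving it. Finally, I must check that the perturbed hazards remain in $\mathcal{C}$; this holds because $r\lesssim 1$ is the only regime of interest.
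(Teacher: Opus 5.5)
Your proposal is correct and follows essentially the paper's route. You reduce to a one-sample problem by holding one server's law fixed across the hypotheses, which is what the paper's one-versus-two-sample reduction lemma does. You take constant hazards $1$ versus $1+O(r)$, get the $n^{-1/2}$ term from Le~Cam and Pinsker, and get the $1/\{n(\epsilon+\delta)\}$ term from group privacy along a coordinatewise maximal coupling. The paper packages that last step through Lemma~6.1 of \cite{karwa2017finite} plus a max-divergence-to-TV conversion, whereas you use the Acharya et al.\ coupling lemma. Also, writing $K$ for the coupled Hamming distance, the pointwise bound $\min\{1, e^{K\epsilon}-1+Ke^{K\epsilon}\delta\}\le 3K(\epsilon+\delta)$ gives $\mathrm{TV}\le 3\,\mathbb{E}[K](\epsilon+\delta)$ directly, so the Markov step you flag as the main obstacle is not actually needed.
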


Taken together,  Propositions~\ref{prop:cumulative_upper} and~\ref{prop:cumulative_lower} show that our minimax separation rate is optimal up to poly-logarithmic factors when $\delta_k \lesssim \epsilon_k$, $k \in \{1, 2\}$, which is the standard regime in differential privacy. 
The terms $n_1^{-1/2}$ and $n_2^{-1/2}$ correspond to the usual non-private testing rate, while $(n_1\epsilon_1)^{-1}$ and $(n_2\epsilon_2)^{-1}$ capture the cost of privacy. The proof of \Cref{prop:cumulative_lower} follows a general idea from \cite{arias2018remember}, where by fixing one sample as a reference distribution, the two-sample problem can be embedded into a one-sample testing problem for the cumulative hazard function; see \Cref{app:cumulative_lower} for details. 

Finally, the same construction also yields a nearly minimax optimal one-sample test by comparing $\widehat{\Lambda}_1(\cdot)$ against a pre-specified null cumulative hazard function $\Lambda_0(\cdot)$. For this problem, the rejection threshold may be calibrated by Monte Carlo simulations of under the null distribution.

\section{Numerical experiments}
\label{app:numerical}
In this section, we examine the numerical performance of our methods with simulated data. The code to reproduce these experiments can be found at \url{https://github.com/EKHung/DP_survival_tests}.  

\subsection{Testing Cox regression coefficient}
We generate failure time data according to \eqref{eq:hazard}, with $\lambda_0(t) = 1$ and covariates generated as $Z_{ij} \overset{\textrm{i.i.d.}}{\sim} \textrm{Uniform}(-1/\sqrt{d}, 1/\sqrt{d})$ for $i \in \{1, \dots, n\}$ and $j \in \{1, \dots, d\}$. The censoring times are generated from an Exp(0.3) distribution, and we take $T_i = \min\{\tilde{T}_i, C_i, 1\}$, $\Delta_i = \mathbbm{1}\{\tilde{T}_i \leq \min(C_i, 1)\}$. We vary the privacy budget $\epsilon \in \{1, 2, 3, 4\}$ and the number of samples $n \in \{3000, 3500, \dots, 6000\}$.  

In Figures~\ref{fig:binary_test} and \ref{fig:score_test}, we depict the empirical power and Type-I error of the test from \Cref{prop:beta_binary_upper} and \Cref{lemma:trace_plugin} with $\beta_0=0$, respectively. In both figures, we observe the trend of increasing power as $n$ and $\epsilon$ increase, echoing our theoretical findings.

\begin{figure}[htbp]
  \centering
\includegraphics[width=0.9\textwidth]{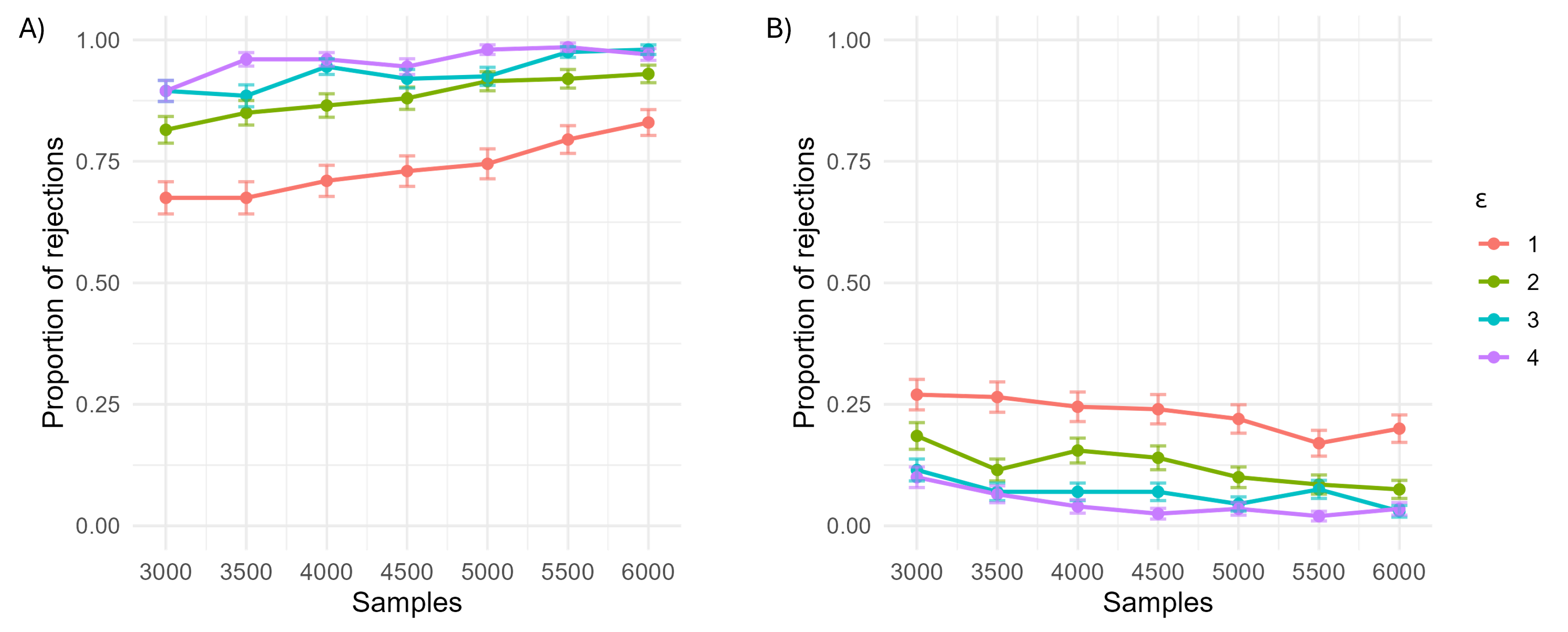}
  \caption{Proportion of rejections from applying the binary likelihood ratio test from \Cref{prop:beta_binary_upper} with $\beta_0=(0, 0, 0)$ and $\beta_1 = (0.2, 0.2, 0.2)$ to simulated data, over 200 repetitions, with bars showing standard error. The true parameter values are $\beta^*=(0.2, 0.2, 0.2)$ (panel A) and $\beta^*=(0, 0, 0)$ (panel B).}
  \label{fig:binary_test}
\end{figure}

\begin{figure}[htbp]
  \centering
\includegraphics[width=0.9\textwidth]{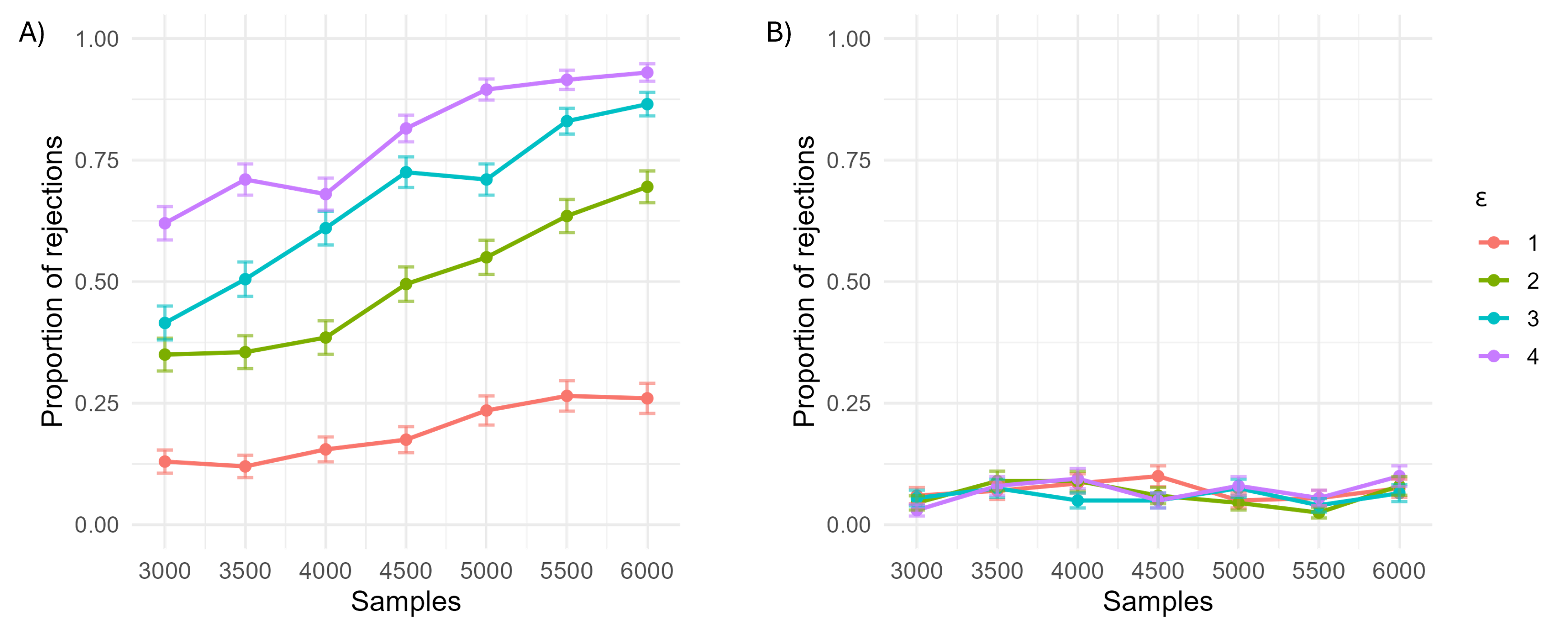}
  \caption{Proportion of rejections from applying the score test from \Cref{lemma:trace_plugin} to simulated data, over 200 repetitions, with bars showing standard error. The true parameter values are $\beta^*=(0.2, 0.2, 0.2)$ (panel A) and $\beta^*=(0, 0, 0)$ (panel B).}
  \label{fig:score_test}
\end{figure}

We take the covariate bound in \Cref{assp:covariates} to be $C_Z=1$, based on the true covariate generating process. The rejection threshold for the score test depends on two tuning parameters: $c_1$ and $c_2$ in \eqref{eq:beta_composite_threshold}, which are based on high probability upper bounds in the non-private test
under $\mathrm{H}_0$ and on the privacy-preserving noise. In our experiments, we set $c_1=0.5$ and $c_2=2$. While $c_2$ can be chosen according to sub-exponential tail bounds, $c_1$ depends on the data generating process; similar hyper-parameter tuning issues for tests have been discussed in \cite{kazan2023test}. A practitioner with further knowledge about the data generating process may be able to obtain a Monte Carlo threshold estimate for type-I error control based on the privatized statistic generated under the null distribution. We provide an example of this in \Cref{fig:bin_testq15} for the binary hypothesis test from \Cref{prop:beta_binary_upper}. All setups remain the same as those described previously, except that the rejection threshold is chosen to control the type-I error at level 0.15, using 2000 Monte Carlo samples of the privatized binary likelihood ratio test statistic under the null $\beta_0=(0, 0, 0)$. We see that the power improves as $n$ and/or $\epsilon$ increases, whilst keeping a desirable type-I error control, justifying our theoretical findings.

\begin{figure}[htbp]
  \centering
\includegraphics[width=0.9\textwidth]{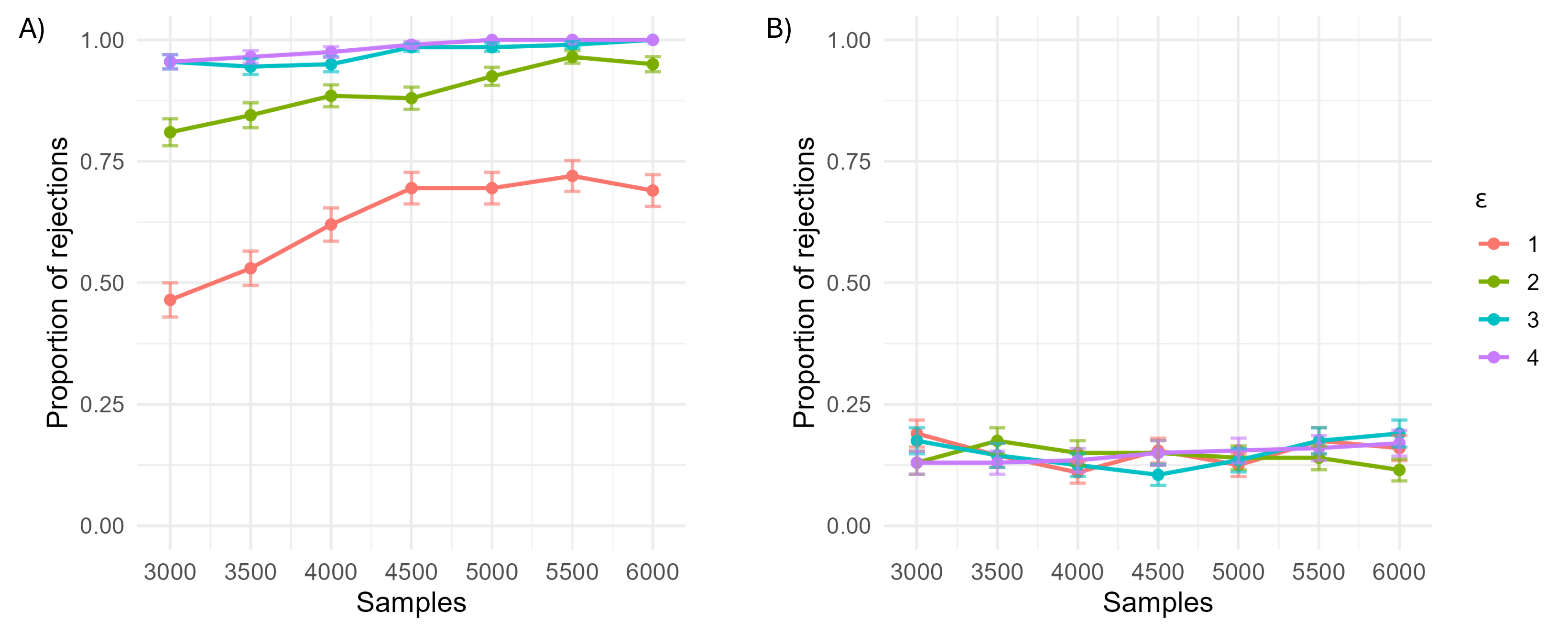}
  \caption{Proportion of rejections from applying the binary hypothesis test from \Cref{prop:beta_binary_upper} with an estimated rejection threshold sampled from the distribution with $\beta^*=(0, 0, 0)$, over 200 repetitions, with bars showing standard error. The true parameter values are $\beta^*=(0.2, 0.2, 0.2)$ (panel A) and $\beta^*=(0, 0, 0)$ (panel B).}
  \label{fig:bin_testq15}
\end{figure}

\subsection{Two-sample testing}
In this subsection, we investigate the numerical performance of the method in \Cref{prop:cumulative_upper}. We consider two servers with the same number of samples and privacy budgets. At the first server, we generate $\tilde{T}_{1, i} \overset{\text{i.i.d.}}{\sim} \mathrm{Exp}(1)$, $C_{1, i}\overset{\text{i.i.d.}}{\sim} \mathrm{Exp}(0.3)$ and we take $T_{1, i} = \min\{\tilde{T}_{1, i}, C_{1, i}\}$ and $\Delta_{1, i} = \mathbbm{1}\{\tilde{T}_{1, i} \leq C_{1, i}\}$. We repeat this procedure at the second server, but generate $\tilde{T}_{2, i} \overset{\text{i.i.d.}}{\sim} \mathrm{Exp}(1 + \gamma)$. We show how increasing $\gamma$ increases the proportion of rejections in  \Cref{fig:two_sample}(A), where there are $n_1=n_2=5000$ samples at each server. In \Cref{fig:two_sample}(B), we estimate the type-I error control by reporting the proportion of rejections when $\gamma=0$ at the second server, varying the number of samples as $n \in \{3000, \dots, 6000\}$. In each sub-figure of \Cref{fig:two_sample}, the privacy budgets are varied as $\epsilon\in \{1, 2, 3, 4\}$ and we fix $\delta = 0.001$ and set the tuning parameter in \eqref{eq:two_sample_threshold} to be $c=2$. We see an improvement in power as sample size, privacy budget or the separation between two hypotheses increases, while retaining reasonable type-I error control. 

\begin{figure}[htbp]
  \centering
\includegraphics[width=0.9\textwidth]{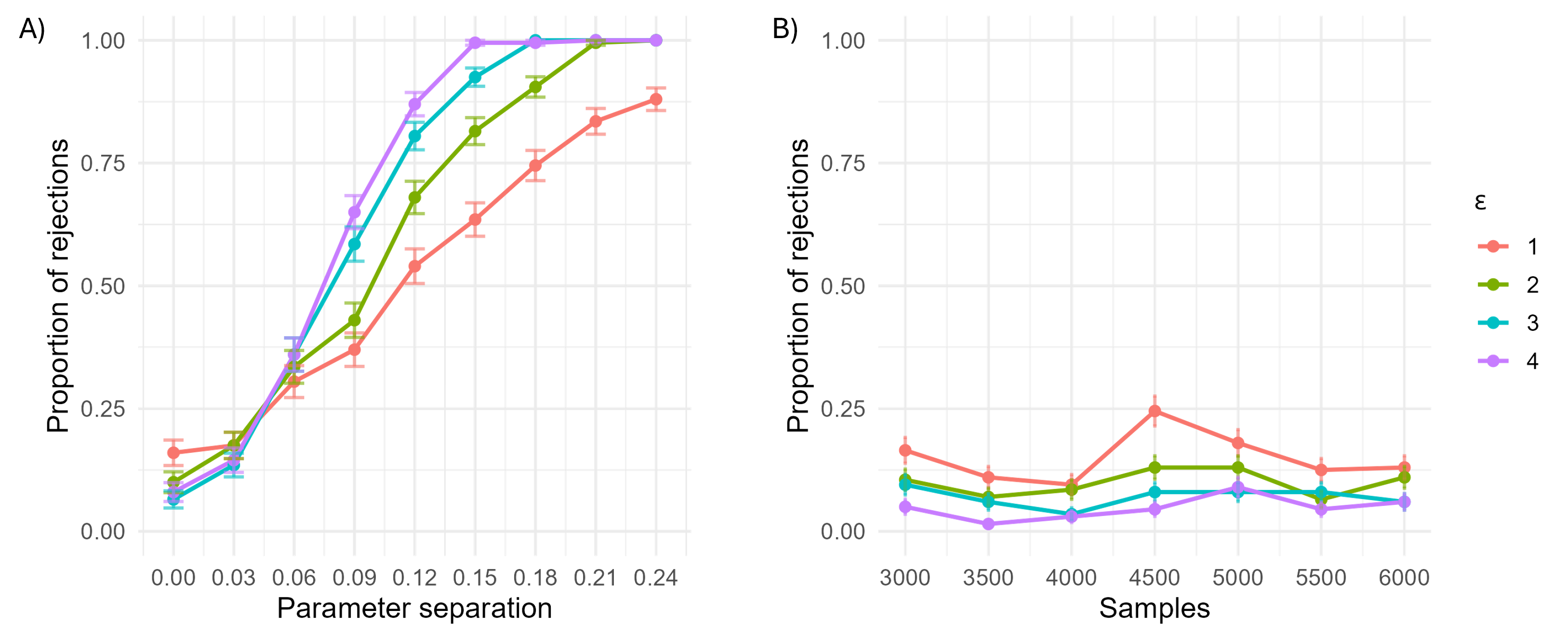}
\caption{Panel A shows the proportion of rejections from applying the two-sample test from \Cref{prop:cumulative_upper} to $n_1=n_2=5000$ simulated samples at each server, with bars showing standard error over 200 repetitions. In Panel B, the failure time distribution at both servers is $\mathrm{Exp}(1)$.}
  \label{fig:two_sample}
\end{figure}

\section*{Acknowledgements}
Hung is supported by the Chancellors' Scholarship scheme and the Statistics Centre for Doctoral Training at the University of Warwick. Yu is partially supported by the Philip Leverhulme Prize and EPSRC programme grant EP/Z531327/1.

\bibliographystyle{apalike}
\bibliography{ref}

\newpage
\appendix 
\section*{Appendices}
The proofs of results in \Cref{sec-2} and \Cref{sec-3} of the main text can be found in \Cref{app:sec-2} and \Cref{app:sec-3} respectively. Additional technical details and the proof of results from \Cref{sec-4} of the main text are contained in \Cref{app:sec-4}. \Cref{app:aux_lemmas} contains auxiliary lemmas used in our proofs. 

We introduce some additional notation. For $v \in \mathbb{R}^d$, we denote $v^{\otimes 0} = 1$, $v^{\otimes 1} = v$, and $v^{\otimes 2} = v v^\top$, and for a matrix $M$, we use $\|M\|$ to denote its $2,2$ operator norm. For a distribution $P$, we use $P^{\otimes n}$ to denote its $n$-fold product distribution. We use $c, C, c_1, c_2$ etc.~to denote absolute constants that may differ between different equations. For two sequences of positive numbers $\{a_n\}$ and $\{b_n\}$, we write $a_n \lesssim b_n$ if there exist an absolute constant $c>0$ such that $a_n \leq c b_n$, and we write $a_n \asymp b_n$ if $a_n \lesssim b_n$ and $b_n \lesssim a_n$. 

\section{Technical details from \texorpdfstring{\Cref{sec-2}}{}}
\label[appendix]{app:sec-2}
This appendix contains the technical details of \Cref{sec-2}: the proofs of \Cref{prop:beta_binary_upper} and \Cref{prop:beta_binary_lower} are in \Cref{sec-proof-prop-1} and \Cref{app:beta_bin_lower} respectively. 
\subsection{Proof of \texorpdfstring{\Cref{prop:beta_binary_upper}}{}} \label[appendix]{sec-proof-prop-1}
\begin{proof} 
\
\\
\textbf{Privacy guarantee:} we will show the privacy guarantee by bounding the sensitivity of the log likelihood ratio. It would then follow from \Cref{lemma:laplace_mechanism} and the post-processing property of differential privacy \citep[e.g.][Proposition 2.1]{dwork2014algorithmic} that the test $\phi$ is $(\epsilon, 0)$-differentially private.

Let  $\beta_0, \beta_1  \in \mathbb{B}_{C_\beta}(0) \subset \mathbb{R}^d$ and $D, D'$ be any two datasets that satisfy $\sum_{i=1}^n \mathbbm{1}\{(T_i, \Delta_i, Z_i) \neq (T_i', \Delta_i', Z_i')\} \leq 1$. Let $\ell_n(\beta_0; D)$ and $\ell_n(\beta_0; D')$ be \eqref{eq:Cox_partial_likelihood} evaluated at the two datasets. We have that
\begin{align*}
    &\left|\ell_n\left( \beta_0; D \right) -  \ell_n \left( \beta_0; D'\right)  - \{\ell_n(\beta_1; D) - \ell_n(\beta_1; D')\}\right| \\
    & \quad\leq \left\|\dot{\ell}_n \left(\theta \beta_0  + (1-\theta) \beta_1; D \right) - \dot{\ell}_n \left(\theta \beta_0  + (1-\theta) \beta_1; D' \right) \right\|_2 \|\beta_0 - \beta_1\|_2, \quad \theta \in [0 , 1]\\
    & \quad\leq c_{\beta_0, \beta_1} (1+ \log(n))\|\beta_0 - \beta_1\|_2, 
\end{align*}
recalling that $c_{\beta_0, \beta_1}= 4C_Z + \exp(2\max\{\|\beta_0\|_2, \|\beta_1\|_2\} C_Z)(2C_Z + C_Z^2)$. The first inequality is from the mean value theorem and the Cauchy--Schwarz inequality and the second inequality follows from bounding the sensitivity of the gradient by Lemma 9 of \cite{FDPCox}, and that $\|\theta \beta_0 + (1-\theta)\beta_1\|_2 \leq \max\{\|\beta_0\|_2, \|\beta_1\|_2\}$.

\bigskip
\noindent \textbf{Testing error:} we now consider bounding the non-private statistic under $\mathrm{H}_0: \beta^* = \beta_0$. By a Taylor expansion, we have that 
    \begin{align}
     \ell_n(\beta^*) - \ell_n(\beta_1) &= -\dot{\ell}_n(\beta^*)^\top (\beta_1 - \beta^*) - \frac{1}{2}(\beta_1 - \beta^*)^\top \underline{\ddot{\ell}_n([\beta_1, \beta^*])} (\beta_1-\beta^*)     \label{eq:binaryLRT_taylor}
\\
     &= - \dot{\ell}_n(\beta^*)^\top (\beta_1 - \beta^*) + \frac{1}{2}(\beta_1 - \beta^*)^\top nG(\beta^*) (\beta_1 - \beta^*) \notag\\
    &\quad + \frac{1}{2}(\beta_1 - \beta^*)^\top \left\{- \underline{\ddot{\ell}_n([\beta_1, \beta^*])} - nG(\beta^*) \right\}(\beta_1-\beta^*) \notag\\ 
    &= (I) + (II) + (III), \label{eq:binaryLRT_terms}
     \end{align}
where $G(\beta^*)$ is as defined in \Cref{assp:eigenvalue} and we denote 
\begin{equation*}
    \underline{\ddot{\ell}_n([\beta_1, \beta^*])} = \int_0^1  \ddot{\ell}_n \left((1-t)\beta^* + t\beta_1 \right) \dint t.
\end{equation*}
We will now show high probability bounds for each of the terms in \eqref{eq:binaryLRT_terms}. 

\bigskip
\noindent \textbf{Term $(I)$}: it suffices to show that there exists $c_1>0$ such that under the conditions in \Cref{prop:beta_binary_upper},  
\begin{equation}
\mathrm{Var}\left(\dot{\ell}_n(\beta^*)^\top(\beta_1-\beta^*) \right) \leq c_1\|\beta_1-\beta^*\|_2^2 \frac{n}{d},
\label{eq:score_var}
\end{equation}
since it would then follow from Chebyshev's inequality that 
\begin{equation}
    \mathbb{P}\left( |(I)| \geq 4\|\beta^*-\beta_1\|_2 \sqrt{\frac{c_1n}{d}}\right) \leq \frac{1}{16}.
    \label{eq:binary_martingale}
\end{equation}
By bounding cross-covariance using the Cauchy--Schwarz inequality and using $2ab \leq a^2 + b^2$, we have that 
\begin{equation}
    \mathrm{Cov}(\dot{\ell}_n(\beta^*)) \preceq 2\mathrm{Cov}\left(\sum_{i=1}^n \int_0^1 (Z_i(t) - \mu(t) ) \dint M_i(t)\right) + 2\mathrm{Cov}\left(\sum_{i=1}^n \int_0^1 (\bar{Z}(t) - \mu(t)) \dint M_i(t) \right), 
    \label{eqref:loglik-covariance}
\end{equation}
where $M_i(t)$ are independent, square-integrable martingales on $[0, 1]$ with predictable variation 
\begin{equation*}
    \langle M_i\rangle(t) = \int_0^t Y_i(s) \exp(\beta^{*\top}Z_i(s)) \dint \Lambda_0(s). 
\end{equation*} 
Since we also have that $\bar{Z}(t) - \mu(t)$ is predictable and bounded by \Cref{assp:covariates}, by Theorem II.3.1 of \citealp{andersen1993statistical}, we can write  
\begin{equation}   
\mathrm{Cov}\left(\sum_{i=1}^n \int_0^1 \left\{\bar{Z}(t) - \mu(t) \right\} \dint M_i(t) \right) = n\mathbb{E} \left[\int_0^1 \left\{\bar{Z}(t) - \mu(t)\right\}^{\otimes 2} Y(t) \exp(\beta_0^\top Z(t)) \dint \Lambda_0(t) \right].   \label{eq:sample_mean_conv1}
\end{equation}
 The maximum eigenvalue of \eqref{eq:sample_mean_conv1} is less than or equal to 
\begin{equation}
    n\mathbb{E} \left[ \sup_{t \in [0, 1]} \|\bar{Z}(t) - \mu(t)\|_2^4\right]^{1/2} \mathbb{E} \left[  \left(\int_0^1 Y(t) \exp(\beta_0^\top Z(t)) \dint \Lambda_0(t) \right)^2 \right]^{1/2}.
    \label{eq:sample_mean_conv2}
\end{equation}
We may bound $\mathbb{E} \left[  \left(\int_0^1 Y(t) \exp(\beta_0^\top Z(t)) \dint \Lambda_0(t) \right)^2 \right]^{1/2}$ by an absolute constant under Assumptions \ref{assp:covariates} and \ref{assp:indep_atrisk}. We also have for some absolute constant $c>0$ that
 \begin{align}
    \mathbb{E} \left[ \sup_{t \in [0, 1]} \|\bar{Z}(t) - \mu(t)\|_2^4\right] &\leq \int_0^{(2C_Z)^4} \mathbb{P} \left( \sup_{t \in [0, 1]} \|\bar{Z}(t) - \mu(t)\|_2^4 > x\right) \dint x  \notag\\
    &\leq \left(\frac{c \log(n)}{\sqrt{n}} \right)^4 + 4 \int_{c \log(n) / \sqrt{n}}^{2C_Z} y^3 \mathbb{P} \left( \sup_{t \in [0, 1]} \|\bar{Z}(t) - \mu(t)\|_2 > y \right) \dint y \notag\\
    &\lesssim \frac{\log(n)^4}{n^2} + \frac{1}{n^2},
    \label{eq:sample_mean_conv3}
\end{align}
where the first inequality is due to \Cref{assp:covariates} and the final inequality is  due to Lemma 20 of \cite{FDPCox}. Since the first term on the right-hand side of $\eqref{eqref:loglik-covariance}$ is $2nG(\beta^*)$, under the assumptions on $d$ in \Cref{prop:beta_binary_upper}, we have that
\begin{equation*}
    \lambda_{\max}\left( \mathrm{Cov}(\dot{\ell}_n(\beta^*)) \right) \lesssim \frac{n}{d},
\end{equation*}
which implies \eqref{eq:score_var}.

\bigskip
\noindent \textbf{Term $(II)$ in \eqref{eq:binaryLRT_terms}}: by \Cref{assp:eigenvalue}, we have that 
\begin{equation}
    (II) \geq  \frac{n \rho_-}{2d} \|\beta_1 - \beta^*\|^2_2.
    \label{eq:binary_mean}
\end{equation}

\bigskip
\noindent \textbf{Term $(III)$ in \eqref{eq:binaryLRT_terms}}: to bound $(III)$, we start with 
\begin{align*}
\left\|\underline{\ddot{\ell}_n([\beta_1, \beta^*])} - \ddot{\ell}_n(\beta^*)\right\| &= \left\| \int_0^1  \left\{\ddot{\ell}_n \left((1-t)\beta^* + t\beta_1 \right) - \ddot{\ell}_n(\beta^*)\right\} \dint t \right\| \notag \\
    &\leq \int_0^1 \left\| \ddot{\ell}_n \left((1-t)\beta^* + t\beta_1 \right) - \ddot{\ell}_n(\beta^*) \right\| \dint t.
\end{align*}
Due to Lemma 3.2 of \cite{huang2013oracle}, for all $t \in [0, 1]$, we can bound 
\begin{align*}
    \left\| \ddot{\ell}_n \left((1-t)\beta^* + t\beta_1 \right) - \ddot{\ell}_n(\beta^*) \right\|  \leq\max \left\{ \left|e^{2C_Z t\|\beta^* - \beta_1\|_2} - 1 \right|, \left|e^{-2 C_Z t\|\beta^* - \beta_1\|_2
    }- 1 \right| \right\} \left\| \ddot{\ell}_n(\beta^*) \right\|.
\end{align*}
We therefore have that 
\begin{align}
\left\|\underline{\ddot{\ell}_n([\beta_1, \beta^*])} -\ddot{\ell}_n(\beta^*)\right\| &\lesssim \left\|\beta_1-\beta^*\|_2\|\ddot{\ell}_n(\beta^*) \right\| \notag \\
&\lesssim \|\beta_1 - \beta^*\|_2 \left( \left\|\ddot{\ell}_n(\beta^*) + nG(\beta^*) \right\| + \left\|-n G(\beta^*) \right\| \right).
\label{eq:perturbed_Hessian}
\end{align}

Since 
\begin{equation*}
    |(III)| \leq \|\beta_1-\beta^*\|^2_2 \left( \left\| - \underline{\ddot{\ell}_n([\beta_1, \beta^*])} + \ddot{\ell}_n(\beta^*) \right\| + \left\|  \ddot{\ell}_n(\beta^*) + nG(\beta^*) \right\|\right),
\end{equation*} 
by combining \eqref{eq:perturbed_Hessian} with \Cref{lemma:hessian_convergence} and \Cref{assp:eigenvalue}, we have for some absolute constant $c_3>0$ that
\begin{equation}
    \mathbb{P} \left( \left|(III) \right| > c_3 \|\beta_1 - \beta^*\|_2^3
    \left\{  \log(dn)^2\sqrt{n} + \frac{n}{d} \right\} + c_3 \|\beta_1 - \beta^*\|^2_2 \log(dn)^2 \sqrt{n} \right) \lesssim \frac{1}{n}. 
    \label{eq:binary-hessian_deviation}
\end{equation}

\bigskip
\noindent \textbf{Privacy noise}: finally, we can bound the noise added to preserve privacy by
\begin{equation}
    \mathbb{P} \left( \left|\frac{c_{\beta_0, \beta_1} (1+\log(n))\|\beta^* - \beta_1\|_2}{\epsilon} W \right| > \frac{c_{\beta_0, \beta_1} \log(16) (1+\log(n))\|\beta^*-\beta_1\|_2}{\epsilon} \right) \leq \exp(-\log(16))
    \label{eq:binary-privacynoise}
\end{equation}
using the tail bound for a standard Laplace random variable. 

\bigskip
\noindent \textbf{Conclusion}: there exist absolute constants $C_1, C_2>0$ such that under the conditions in \Cref{prop:beta_binary_upper}, it holds that 
\begin{align}
    &4\sqrt{\frac{c_1n}{d}} \|\beta^* - \beta_1\|_2 + c_3\|\beta_1 - \beta^*\|_2^3
    \left\{  \log(dn)^2\sqrt{n} + \frac{n}{d} \right\} \notag\\
    & \quad+ c_3\|\beta_1 - \beta^*\|^2_2 \log(dn)^2 \sqrt{n} + \frac{c_{\beta_0, \beta_1}(1+\log(n))\|\beta^* - \beta_1\|_2}{\epsilon} \leq \frac{n\rho_-}{3d} \|\beta^* - \beta_1\|_2^2.
    \label{eq:beta_binary_sep}
\end{align}

For $\beta_0, \beta_1$ satisfying \eqref{eq:beta_binary_sep}, by a union bound argument with \eqref{eq:binary_martingale}, \eqref{eq:binary_mean}, \eqref{eq:binary-hessian_deviation} and \eqref{eq:binary-privacynoise}, we have that 
\begin{align*}
    \mathbb{P}_{\beta_0}(\gamma(\beta_0, \beta_1) < 0 ) &\leq \mathbb{P} \left( |(I)| + |(III)| + \left|\frac{c_{\beta_0, \beta_1} (1+\log(n))\|\beta^* - \beta_1\|_2}{\epsilon} W \right| > (II) \right)  \\
    &\leq \frac{1}{8} + \frac{c'}{n}.
\end{align*}
By similar arguments considering the negative of \eqref{eq:binaryLRT_taylor}, we have that 
\begin{equation*}
    \mathbb{P}_{\beta_1}(\gamma(\beta_0, \beta_1) \geq  0) \leq \frac{1}{8} + \frac{c'}{n}. 
\end{equation*}
Hence, the type-I and -II error control holds for $n$ large enough. 
\end{proof}

\subsection{Proof of \texorpdfstring{\Cref{prop:beta_binary_lower}}{}}
\label[appendix]{app:beta_bin_lower}
\begin{proof}
It suffices to show that there exist $\beta_0, \beta_1 \in \mathbb{R}^d$ and an absolute constant $c > 0$ such that 
\begin{equation*}
        \|\beta_0 - \beta_1\|_2^2 = c \left( \frac{d}{n} + \frac{d}{n^2 (\epsilon + \delta)^2} \right),
    \end{equation*}
and corresponding $\mathbb{P}_{\beta_0} \in \mathcal{P}(\beta_0)$ and $ \mathbb{P}_{\beta_1} \in \mathcal{P}(\beta_1)$ (defined in \Cref{sec:assumptions} of the main text) such that no $(\epsilon, \delta)$-differentially private mechanism $\phi$ satisfies $\mathbb{P}_{\beta_0}^{\otimes n}(\phi=1) + \mathbb{P}_{\beta_1}^{\otimes n}(\phi=0) < \tfrac{1}{3}$. Without loss of generality, we take $C_Z$ from \Cref{assp:covariates} to be $C_Z=1$ since other values would change the rate by an absolute constant factor independent of $n, d, \epsilon$, and $\delta$. 

Let $\beta_0 = (0, \dots, 0)^\top \in \mathbb{R}^d$ and $\beta_1 = (r/\sqrt{d}, \dots, r/\sqrt{d})^{\top} \in \mathbb{R}^d$, where 
\begin{equation}\label{eq-proof-prop-1-r}
    r^2 \asymp \frac{d}{n} + \frac{d}{n^2 (\epsilon + \delta)^2}.    
\end{equation}
We construct the distribution $(T, \Delta, Z) \sim \mathbb{P}_{\beta_0}$ as follows: let  $Z \sim \mathcal{U}[-1/\sqrt{d}, 1/\sqrt{d}]^{\otimes d}$ and $\tilde{T} \sim \textrm{Exp}(1)$. Let $C \sim 1+\textrm{Exp}(1)$ independently and set $T = \min(\tilde{T}, C)$, and $\Delta = \mathbbm{1}\{\tilde{T} \leq C\}$. Construct $\mathbb{P}_{\beta_1}$ similarly, but with $\tilde{T} \mid Z \sim \mathrm{Exp}(\exp(\beta_1 ^\top Z))$. It follows from the proof of Proposition 10 of \cite{FDPCox} that  $\mathbb{P}_{\beta_0} \in \mathcal{P}(\beta_0)$ and $ \mathbb{P}_{\beta_1} \in \mathcal{P}(\beta_1)$.

By the data processing inequality, we have that 
    \begin{align*}
        D_{\mathrm{KL}}(\mathbb{P}_{\beta_0} \parallel \mathbb{P}_{\beta_1}) &\leq \mathbb{E}_Z \mathbb{E}_{\tilde{T} \sim \mathrm{Exp}(1)}\left[ \log \frac{\exp(-\tilde{T})}{\exp(\beta_1^\top Z) \exp \left\{-\exp(\beta_1^\top Z) \tilde{T}\right\}}\right] \\
        &= \mathbb{E}_Z \left[ \exp(\beta_1^\top Z) - 1 - \beta_1^\top Z \right] \\
        &\leq \frac{1}{2} \mathbb{E}_Z \left[(\beta_1^\top Z)^2 + O(|(\beta_1^\top Z)^3|) \right] \\
        &\lesssim r^2/d + r^3/d^{3/2} \asymp r^2/d,
    \end{align*}
provided that $r < 1$. Suppose that $d/n >d/(n^2\epsilon^2)$. For any $(\epsilon, \delta)$-differentially private mechanism $\phi$, we have that  
\begin{equation*}
D_{\mathrm{TV}}(\phi \mathbb{P}_{\beta_0}^{\otimes n}, \phi\mathbb{P}_{\beta_1}^{\otimes n}) \leq D_{\mathrm{TV}}(\mathbb{P}_{\beta_0}^{\otimes n}, \mathbb{P}_{\beta_1}^{\otimes n}) \leq \sqrt{\frac{1}{2} n D_{\mathrm{KL}}(\mathbb{P}_{\beta_0}, \mathbb{P}_{\beta_1})} \lesssim 1,
\end{equation*}
where the first inequality is from the data processing inequality, the second is Pinsker's inequality (e.g.~Lemma 15.6 in \citealp{Wainwright2019}), and the last from the choice of $r$ in \eqref{eq-proof-prop-1-r}.
Otherwise, suppose that $d/(n^2\epsilon^2) \geq d/n$. By Lemma 6.1 of \cite{karwa2017finite} we have that 
\begin{equation*}
    D_\infty^{\tilde{\delta}} (\phi\mathbb{P}_{\beta_0}^{\otimes n}, \phi\mathbb{P}_{\beta_0}^{\otimes n}) \leq 6n \epsilon D_{\mathrm{TV}}(\mathbb{P}_{\beta_0}, \mathbb{P}_{\beta_1}), \quad \tilde{\delta }= \exp(6n \epsilon D_{\mathrm{TV}}(\mathbb{P}_{\beta_0}, \mathbb{P}_{\beta_1})) n\delta D_{\mathrm{TV}}(\mathbb{P}_{\beta_0}, \mathbb{P}_{\beta_1}),
\end{equation*}
where $D_\infty^\delta(P, Q)$ denotes the $\delta$-approximate max-divergence between two distributions $P, Q$ \citep[e.g.][Definition 3.6]{dwork2014algorithmic}. By Lemmas C.4 and C.5 of \cite{cai2024optimalfederatedlearningnonparametric} and Pinsker's inequality, we therefore have that 
\begin{align*}
    D_{\mathrm{TV}}(\phi \mathbb{P}_{\beta_0}^{\otimes n}, \phi\mathbb{P}_{\beta_1}^{\otimes n}) &\leq 2\tilde{\delta} +  \sqrt{3n \epsilon D_{\mathrm{TV}}(\mathbb{P}_{\beta_0}, \mathbb{P}_{\beta_1})\{\exp( 6n \epsilon D_{\mathrm{TV}}(\mathbb{P}_{\beta_0}, \mathbb{P}_{\beta_1})) - 1\}} \\
    &\leq 2\exp(6n\epsilon r/\sqrt{d}) \frac{nr \delta}{\sqrt{d}} + 6 \frac{n\epsilon r}{\sqrt{d}},
\end{align*}
where the last line holds if $n\epsilon r/\sqrt{d} < 0.2$, since $e^{6x} - 1 \leq 12x$ for all $x \leq 0.2$. By the choice of $r$ in \eqref{eq-proof-prop-1-r}, we can therefore bound $ D_{\mathrm{TV}}(\phi \mathbb{P}_{\beta_0}^{\otimes n}, \phi\mathbb{P}_{\beta_1}^{\otimes n}) \lesssim 1$. We conclude the proof by noting that the sum of type-I and II errors for any test $\phi$ is equal to $1- D_{\mathrm{TV}}(\phi \mathbb{P}_{\beta_0}^{\otimes n}, \phi \mathbb{P}_{\beta_1}^{\otimes n})$.
\end{proof}

\section{Technical details from \texorpdfstring{\Cref{sec-3}}{}}
\label[appendix]{app:sec-3}
The proof of the non-private lower bound result of \Cref{lemma:beta_composite_lower} can be found in \Cref{app:composite_lower_bound}. The remaining sub-sections provide the technical details for proving the upper bound: the proof of \Cref{lemma:scoretest_theoretical} is in \Cref{app:scoretest_theoretical}, the bound on the sensitivity of the private trace estimator is in \Cref{sec-trace-sensitivity}, and the proof of \Cref{lemma:trace_plugin} is in \Cref{app:trace_plugin}. 

\subsection{Proof of \texorpdfstring{\Cref{lemma:beta_composite_lower}}{}}
\label[appendix]{app:composite_lower_bound}
\begin{proof} 
We define the minimax separation gap for the identity testing problem as 
\begin{align*} \label{eq-separation-def-id}
& r^*(d, n) = \inf \Big\{ r > 0 \ : \ \forall \beta_0 \in \mathbb{B}_{C_\beta}(0) \subset \mathbb{R}^d, \exists  \text{ a test } \phi \text{ s.t.~}  \forall \mathbb{P}_{\beta_0} \in \mathcal{P}(\beta_0) \text{ and } \mathbb{P}_{\beta_1} \in \mathcal{P}(\beta_1): \\ 
& \hspace{5cm} \|\beta_0 - \beta_1\|_2>r, \,\mathbb{P}_{\beta_0}^{\otimes n} (\phi=1) + \mathbb{P}_{\beta_1}^{\otimes n}(\phi=0) < 1/3  \Big\}, 
\end{align*}
where $\mathcal{P}(\beta)$ is defined in \Cref{sec:assumptions} of the main text. 

\medskip
\noindent \textbf{Construction.} 
Consider the model given by $Z \sim \mathcal{U}[-1/\sqrt{d}, 1/\sqrt{d}]^{\otimes d}, \,T \mid Z \sim \mathrm{Exp}(\exp(Z^\top \beta))$, and $C \sim \mathrm{Exp}(1) + 1$ independently of $Z$ and $\tilde{T}$, and denote this distribution by $\mathbb{P}_\beta$. It follows from the proof of Proposition 10 in \cite{FDPCox} that for $\|\beta\|_2$ small enough, $\mathbb{P}_\beta$ satisfies Assumptions~\ref{assp:covariates}, \ref{assp:indep_atrisk} and \ref{assp:eigenvalue}, i.e.~it would belong to $\mathcal{P}(\beta^*)$.  For some $r>0$ to be specified later, let $\mathcal{P}_1(r) = \{\mathbb{P}_{(r/\sqrt{d}) \theta}, \,\theta \in \{\pm1\}^d\}$. We therefore have that
\begin{align*}
    r^*(d, n) \geq \sup\left\{r>0: \mathbb{P}_0^{\otimes  n}(\phi=1)+\sup_{\mathbb{P}_1 \in \mathcal{P}_1(r)} \mathbb{P}_1^{\otimes n}(\phi=0)\geq \frac{1}{3} \,\,\forall \text{ tests } \phi \right\}.
\end{align*}

\bigskip
\noindent \textbf{Lower bound via the TV distance.}  We can lower bound 
\begin{align*}
   \inf_{\phi} \sup_{\mathbb{P}_1 \in \mathcal{P}_1(r)} \left\{\mathbb{P}_0(\phi = 1) + \mathbb{P}_1 (\phi = 0)\right\} &\geq 1 - D_{\mathrm{TV}}(\mathbb{P}_0^{\otimes n}, \mathbb{P}_{\pi, n}) 
\end{align*}
where $\mathbb{P}_{\pi, n}$ is the mixture distribution of $\mathbb{P}_1$ with $\theta$ having i.i.d.~Radamacher coordinates. It now suffices to upper bound $D_{\mathrm{TV}}(\mathbb{P}_0^{\otimes n}, \mathbb{P}_{\pi,  n})$.

 Due to the data-processing inequality, we suppress the effect of the censoring in defining $\mathbb{P}_0^{\otimes n}$ for the null distribution and $\mathbb{P}_{\pi, n}$ for the mixture distribution below. We can bound 
\begin{align*}
    D_{\mathrm{TV}}(\mathbb{P}_0^{\otimes n}, \mathbb{P}_{\pi, n}) \leq \frac{1}{2} \sqrt{D_{\chi^2}(\mathbb{P}_0^{\otimes n},\mathbb{P}_{\pi, n})} &= \sqrt{\mathbb{E}_0\left[\left(\frac{\dint \mathbb{P}_{\pi, n}}{\dint \mathbb{P}_0^{\otimes n}}\right)^2\right] - 1}\\
    &=  \sqrt{\mathbb{E}_{\theta, \theta'}\left[\mathbb{E}_0 \left[ \frac{\dint\mathbb{P}_{(r/\sqrt{d})\theta}^{\otimes n}}{\dint\mathbb{P}_0^{\otimes n}}\frac{\dint\mathbb{P}_{(r/\sqrt{d})\theta'}^{\otimes n}}{\dint\mathbb{P}_0^{\otimes n}} \right] \right] - 1},
\end{align*}
where the inequality is by Cauchy--Schwarz and we can interchange the expectation from the mixture expansion due to Tonelli's theorem. 

For any $\theta, \theta'$, we have  
\begin{align*}
&\mathbb{E}_0 \left[ \frac{\dint\mathbb{P}_{(r/\sqrt{d})\theta}^{\otimes n}}{\dint\mathbb{P}_0^{\otimes n}}\frac{\dint\mathbb{P}_{(r/\sqrt{d})\theta'}^{\otimes n}}{\dint\mathbb{P}_0^{\otimes n}} \right] \\
&\quad=\mathbb{E}_{Z_{1:n}, T_{1:n} \sim \mathrm{Exp}(1)} \left[ \exp \left\{\left(\frac{r}{\sqrt{d}} \theta + \frac{r}{\sqrt{d}} \theta'\right)^\top \sum_{i=1}^n Z_i \right\} \exp \left\{ \sum_{i=1}^n -T_i \left(( e^{\frac{r}{\sqrt{d}} \theta^\top Z_i} + e^{\frac{r}{\sqrt{d}} \theta'^\top Z_i}) - 2 \right) \right\} 
\right] \\
&\quad= \prod_{i=1}^n \mathbb{E}_{Z_i} \left[\frac{ e^{\frac{r}{\sqrt{d}} \theta^\top Z_i + \frac{r}{\sqrt{d}} \theta'^\top Z_i}}{e^{\frac{r}{\sqrt{d}} \theta^\top Z_i} + e^{\frac{r}{\sqrt{d}} \theta'^\top Z_i} - 1} \right]\\
&\quad\stackrel{(a)}{=} \prod_{i=1}^n \mathbb{E}_{Z_i} \Bigg[ 1 + \left(\frac{r}{\sqrt{d}} \theta^\top Z_i\right)\left( \frac{r}{\sqrt{d}} \theta'^\top Z_i\right) + \frac{1}{2}\left(\frac{r}{\sqrt{d}} \theta^\top Z_i\right)^2\left(\frac{r}{\sqrt{d}} \theta'^\top Z_i\right)\\
& \hspace{4cm} +  \frac{1}{2} \left(\frac{r}{\sqrt{d}} \theta^\top Z_i \right) \left( \frac{r}{\sqrt{d}} \theta'^\top Z_i \right)^2 + O \left( \left|\frac{r}{\sqrt{d}} \theta^\top Z_i \right| + \left|\frac{r}{\sqrt{d}} \theta'^\top Z_i \right| \right)^4 \Bigg] \\
&\quad \stackrel{(b)}{=} \left(1 + c_1\frac{r^2}{d^2} \theta ^\top \theta' + O(r^4/d^2) \right)^n \\
&\quad \stackrel{(c)}{\leq} \exp \left\{ n \left(\frac{c_1r^2}{d^2} \theta ^\top \theta' + O(r^4/d^2) \right) \right\}
\end{align*}
where $(a)$ is by \Cref{lemma:lrt_taylor} and $(b)$ is due the data being i.i.d.~with
\begin{equation*}
    \mathbb{E}_Z\left[\left(\frac{r}{\sqrt{d}} \theta^\top Z \right)^2 \left(\frac{r}{\sqrt{d}} \theta'^\top Z \right) \right] = \frac{r^3}{d^{3/2}} \sum_{j, k, l=1}^ d\mathbb{E}_Z\left[\{ \theta\}_j \{ \theta\}_k \{\theta'\}_l \{Z\}_j\{Z\}_k\{Z\}_l \right] = 0,
\end{equation*}
where we use $\{x\}_k$ to denote the $k$-th coordinate of $x \in \mathbb{R}^d$ , and 
\begin{equation*}
    \mathbb{E}_Z\left[ \left(\frac{r}{\sqrt{d}} \theta^\top Z \right)^4 \right] = \frac{r^2}{d^2} \mathbb{E}_Z\left[\sum_{j=1}^d \{\theta\}_j^4 \{Z\}_j^4 +  6\sum_{j<k}^d \{\theta\}_j^2 \{Z\}_j^2 \{ \theta\}_k^2\{Z\}_k^2 \right] \asymp \frac{r^4}{d^2}
\end{equation*}
for $Z \sim \mathcal{U}[-1/\sqrt{d}, 1/\sqrt{d}]^{\otimes d}$. The inequality $(c)$ uses that $1+x \leq e^x$ for all $x \in \mathbb{R}$.  

By considering the m.g.f.~of Rademacher random variables, we have that  
\begin{align*}
    \mathbb{E}_{\theta, \theta'} \left[\exp \left(n \frac{c_1r^2}{d^2} \theta^\top \theta' \right) \right] 
    &= \cosh \left( \frac{nc_1r^2}{d^2} \right)^d \leq \exp\left( \frac{n^2c_1^2r^4}{2d^3} \right),
\end{align*}
where the final step is from the inequality $\cosh(x) \leq \exp(x^2/2)$ for all $x \in \mathbb{R}$. Therefore, by taking $r = c d^{3/4}/\sqrt{n}$ for some appropriate absolute constant $c>0$, we can bound $D_{\mathrm{TV}}(\mathbb{P}_0^{\otimes n}, \mathbb{P}_{\pi, n}) \leq 2/3$, which concludes the proof. 
\end{proof}

\subsection{Proof of \texorpdfstring{\Cref{lemma:scoretest_theoretical}}{}}
\label[appendix]{app:scoretest_theoretical}
\begin{proof}
The privacy guarantee follows from Lemma 9 in \cite{FDPCox}, which bounds the $\ell_2$-sensitivity of the partial likelihood gradient \eqref{eq:Cox_grad} for any $\beta_0 \in \mathbb{R}^d$ by 
\begin{equation*}
    \sup_{D, D': \sum_{i=1}^n \mathbbm{1}\{D_i \neq D_i'\} = 1} \left\| \frac{\dot{\ell}_n(\beta_0; D)}{\sqrt{n}} - \frac{\dot{\ell}_n(\beta_0; D')}{\sqrt{n}} \right\|_2 \leq \frac{C_{\beta_0} (1+\log(n))}{\sqrt{n}},
\end{equation*}
where we write $D = \{(T_i, \Delta_i, \{Z_i(t), t \in [0, 1]\})\}_{i=1}^n$ and similarly for $D'$. By the reverse triangle inequality, this is also a bound on the sensitivity of $f(D) = \|\dot{\ell}_n(\beta_0; D)/\sqrt{n}\|_2$. It follows from \Cref{lemma:laplace_mechanism} and the post-processing property of differential privacy \citep[e.g.~Proposition 2.1 in][]{dwork2014algorithmic} that the test $\phi$ is $(\epsilon, 0)$-differentially private.

For the error guarantee, we start by setting up some notation for the non-private part of the test. Using a Taylor expansion, we obtain
\begin{align*}
    \frac{\dot{\ell}_n(\beta_0)}{\sqrt{n}} &= \frac{\dot{\ell}_n(\beta^*)}{\sqrt{n}} +  \frac{1}{\sqrt{n}}\int_0^1 \ddot{\ell}_n(t\beta_0 + (1-t)\beta^*) \dint t (\beta_0 - \beta^*) \\
    &= A_n + B_n + C_n - \sqrt{n}G(\beta^*)(\beta_0-\beta^*),
\end{align*}
where we define 
\begin{equation*}
    A_n = \frac{1}{\sqrt{n}}\sum_{i=1}^n \int_0^1 (Z_i(t) - \mu(t)) \dint M_i(t), \quad B_n = \frac{1}{\sqrt{n}}\sum_{i=1}^n \int_0^1 (\mu(t) - \bar{Z}(t)) \dint M_i(t),
\end{equation*}
and 
\begin{equation*}
    C_n = \left\{\frac{1}{\sqrt{n}} \int_0^1 \ddot{\ell}_n(t\beta_0 + (1-t)\beta^*) \dint t + \sqrt{n} G(\beta^*)\right\} (\beta_0 - \beta^*).
\end{equation*}

\bigskip
\noindent \textbf{Type-I error:} by the triangle inequality, we have that
\begin{align}
    &\mathbb{P}\left(\|A_n + B_n\|_2 + \frac{C_{\beta_0} (1+\log(n))}{\sqrt{n} \epsilon}W > \sqrt{\mathrm{tr}\{G(\beta^*)\}} + \frac{c_1}{\sqrt{d}} + \frac{C_{\beta_0}(1+\log(n))\log(1/\alpha)}{\sqrt{n} \epsilon}\right) \notag \\
    &\quad\leq \mathbb{P}\left(\|A_n\|_2> \sqrt{\mathrm{tr}\{G(\beta^*)\}} + \frac{c_1}{2\sqrt{d}}\right) + \mathbb{P} \left( \|B_n\|_2 > \frac{c_1}{2 \sqrt{d}} \right) + \mathbb{P}\left( W >  \log(1/\alpha) \right)
    \label{eq:beta_binary_typeI}
\end{align}
We can bound the first term by 
\begin{align}    &\mathbb{P}\left(\|A_n\|_2 > \sqrt{\mathrm{tr}\{G(\beta^*)\}} + \frac{c_1}{2\sqrt{d}}\right) \notag\\
    &\quad= \mathbb{P}\left(\|A_n\|^2_2 > \mathrm{tr}\{G(\beta^*)\} + \sqrt{\frac{c_1 \mathrm{tr}\{G(\beta^*)\}}{d}} + \frac{c_1^2}{4d}\right) \notag\\
    &\quad\leq \mathbb{P}\left( \left|\|A_n\|^2_2 - \mathbb{E}[\|A_n\|^2_2] \right| \geq \mathrm{tr}\{G(\beta^*)\} + \sqrt{\frac{c_1 \mathrm{tr}\{G(\beta^*)\}}{d}} + \frac{c_1^2}{4d} - \mathbb{E}[\|A_n\|^2_2] \right) \notag\\
     &\quad\leq \mathbb{P}\left( \left|\|A_n\|^2_2 - \mathbb{E}\left[\|A_n\|^2_2 \right] \right| \geq  \sqrt{\frac{c_1 \mathrm{tr}\{G(\beta^*)\}}{d}} + \frac{c_1^2}{4d} \right)
     \label{eq:scoretest_type1}\\
     &\quad\leq \frac{4\mathrm{tr}\{G(\beta^*)^2\} + c/n}{(\sqrt{c_1 \mathrm{tr}\{G(\beta^*)\}/d} + c_1^2/(4d))^2} \lesssim \frac{1}{c_1} \notag
\end{align}
where in the final line we used \Cref{assp:eigenvalue}, \Cref{lemma:vector_expvar} and Chebyshev's inequality and the final inequality holds for $n, d$ large enough.   

To bound the second term of \eqref{eq:beta_binary_typeI}, note that by \eqref{eq:sample_mean_conv1}, \eqref{eq:sample_mean_conv2} and \eqref{eq:sample_mean_conv3}, we have that $\mathrm{Cov}(B_n)$ has maximum eigenvalue upper bounded by $c \log(n)^2/n$. It follows from Markov's inequality that 
\begin{equation}
    \mathbb{P} \left( \|B_n\|_2 \geq \frac{c_1}{2\sqrt{d}}  \right) \leq \frac{\mathbb{E}[\|B_n\|^2_2]}{c_1^2/4d} \leq \frac{c \log(n)^2d/n}{c_1^2/4d} \leq cC_1/c_1^2,
    \label{eq:sample_mean_convergence}
\end{equation}
where the final inequality follows from the conditions in \Cref{lemma:scoretest_theoretical} and is assumed to be sufficiently small for the error guarantee.

 By a tail bound for a standard Laplace random variable, the third term of \eqref{eq:beta_binary_typeI} can be bound by $\alpha/2$. We may scale $c_1$ based on the absolute constant $\rho_+$ from \Cref{assp:eigenvalue} so that by a union bound argument, for large enough $n, d$, we can bound $\mathbb{P}_{\mathrm{H}_0}(\phi=1) \leq 1/6$.  

\bigskip
\noindent \textbf{Type-II error:} we may bound 
\begin{align*}
    &\mathbb{P} \Bigg( \left\|A_n + B_n + C_n - \sqrt{n}G(\beta^*)(\beta_0-\beta^*) \right\|_2 + \frac{C_{\beta_0}(1+\log(n))}{\sqrt{n} \epsilon}W \\
    &\hspace{6cm}\leq \sqrt{\mathrm{tr}\{G(\beta^*)\}} + \frac{c_1}{\sqrt{d}} + \frac{C_{\beta_0}(1+\log(n)) \log(1/\alpha)}{\sqrt{n} \epsilon} \Bigg) \\
    &\quad \leq \mathbb{P} \Bigg( \left\|A_n + B_n + C_n - \sqrt{n}G(\beta^*)(\beta_0 - \beta^*) \right\|_2 \\
    &\hspace{6cm} \leq \sqrt{\mathrm{tr}\{G(\beta^*)\}} + \frac{c_1}{\sqrt{d}} + \frac{2C_{\beta_0}(1+\log(n)) \log(1/\alpha)}{\sqrt{n} \epsilon} \Bigg) + \frac{\alpha}{2}.  
\end{align*}
To simplify notation, we denote 
\begin{equation}
    \kappa = 2C_{\beta_0} (1+\log(n))\log(1/\alpha).
    \label{eq:kappa_notation}
\end{equation}
We can bound the first term by
\begin{align}
    &\mathbb{P}\left(\left\|A_n + B_n + C_n - \sqrt{n}G(\beta^*)(\beta_0-\beta^*) \right\|_2 \leq \sqrt{\mathrm{tr}\{G(\beta^*)\}} + \frac{c_1}{\sqrt{d}} + \frac{\kappa}{\sqrt{n} \epsilon}\right) \notag\\
    &\quad \leq \mathbb{P}\left( \left|\|A_n + B_n - \sqrt{n} G(\beta^*)(\beta_0 - \beta^*)\|_2 - \|C_n\|_2 \right| \leq  \sqrt{\mathrm{tr}\{G(\beta^*)\}} + \frac{c_1}{\sqrt{d}} + \frac{\kappa}{\sqrt{n} \epsilon} \right) \notag\\
    & \quad= \mathbb{P}\Bigg(\|C_n\|_2  - \sqrt{\mathrm{tr}\{G(\beta^*)\}} - \frac{c_1}{\sqrt{d}} - \frac{\kappa}{\sqrt{n} \epsilon}  \notag \\
    &\hspace{3cm} \leq \|A_n + B_n - \sqrt{n} G(\beta^*)(\beta_0 - \beta^*)\|_2 \leq \|C_n\|_2 + \sqrt{\mathrm{tr}\{G(\beta^*)\}} + \frac{c_1}{\sqrt{d}} + \frac{\kappa}{\sqrt{n} \epsilon} \Bigg) \notag\\
    & \quad\leq \mathbb{P} \left(\|A_n + B_n - \sqrt{n} G(\beta^*)(\beta_0 - \beta^*)\|_2 \leq  \sqrt{\mathrm{tr}\{G(\beta^*)\}} + \frac{c_1}{\sqrt{d}} + \frac{\kappa}{\sqrt{n} \epsilon}+ c_3 \left(\frac{r^2 \sqrt{n}}{d} + r \log(nd)^2 \right) \right) \notag\\
    &\quad \quad +  \mathbb{P} \left(\|C_n\|_2 \geq c_3 \left(\frac{r^2 \sqrt{n}}{d} + r \log(nd)^2 \right)\right).
    \label{eq:composite_hessian}
\end{align}
where we denote $r= \|\beta_0 - \beta^*\|_2$. The inequality in \eqref{eq:composite_hessian} is from noting that for positive $X, Y$, for any $t, a >0$, we have that 
\begin{equation*}
    \{X-t \leq Y \leq X+t \} \subseteq \{Y \leq t+a\} \cup \{X \geq a\}.
\end{equation*}
We may bound the second term of \eqref{eq:composite_hessian} by $O(1/n)$, since 
\begin{align*}
    \|C_n\|_2 &\leq  \left\|\frac{1}{\sqrt{n}} \int_0^1 \ddot{\ell}_n(t\beta_0 + (1-t)\beta^*) \dint t + \sqrt{n} G(\beta^*) \right\| \|\beta_0 - \beta^*\|_2 \\
    &\leq \left( \frac{1}{\sqrt{n}} \int_0^1 \left\| \ddot{\ell}_n(t\beta_0 - (1-t) \beta^*) - \ddot{\ell}_n(\beta^*) \right\| \dint t + \left\| \frac{1}{\sqrt{n}} \ddot{\ell}_n(\beta^*) + \sqrt{n} G(\beta^*)\right\| \right) \|\beta_0 - \beta^*\|_2\\
    &\lesssim \frac{r^2}{\sqrt{n}}\|\ddot{\ell}_n(\beta^*)\| + r\left\| \frac{1}{\sqrt{n}} \ddot{\ell}_n(\beta^*) + \sqrt{n} G(\beta^*)\right\|
\end{align*}
where the final inequality is by similar arguments leading to \eqref{eq:perturbed_Hessian}. We have from \Cref{lemma:hessian_convergence} that we can bound 
\begin{equation*}
\left\| \frac{1}{\sqrt{n}} \ddot{\ell}_n(\beta^*) + \sqrt{n} G(\beta^*)\right\|\lesssim \log(nd)^2, 
\end{equation*}
with probability greater than $1-c/n$, which also implies that $\| \ddot{\ell}_n(\beta^*)\|_2 \lesssim n/d + \log(nd)^2\sqrt{n}$ with high probability, by a triangle inequality.  

As for the first term of \eqref{eq:composite_hessian}, we have that 
\begin{align}
     &\mathbb{P} \left(\|A_n + B_n - \sqrt{n} G(\beta^*)(\beta_0 - \beta^*)\|_2 \leq \sqrt{\mathrm{tr}\{G(\beta^*)\}} + \frac{c_1}{\sqrt{d}} + \frac{\kappa}{\sqrt{n} \epsilon}+ c_3 \left(\frac{r^2 \sqrt{n}}{d} + r \log(nd)^2 \right) \right) \notag\\
    & \quad\leq \mathbb{P} \left(\|A_n - \sqrt{n} G(\beta^*)(\beta_0 - \beta^*)\|_2 \leq \sqrt{\mathrm{tr}\{G(\beta^*)\}} + \frac{c_1}{\sqrt{d}} + \frac{\kappa}{\sqrt{n} \epsilon}+ c_3 \left(\frac{r^2 \sqrt{n}}{d} + r \log(nd)^2 \right) + c_4 \sqrt{\frac{d}{n}} \right) \notag\\
    &\quad \quad+ \mathbb{P}\left(\|B_n\|_2 \geq c_4 \sqrt{\frac{d}{n}} \right). \label{eq:beta_composite_alternative2}
\end{align}
The second probability in \eqref{eq:beta_composite_alternative2} can be bound using \eqref{eq:sample_mean_convergence} by e.g.~1/20, under the conditions in \Cref{lemma:scoretest_theoretical} with appropriate absolute constants. 
Letting $v= -\sqrt{n}G(\beta^*)(\beta_0 - \beta^*)$ and $q= c_5/\sqrt{d} + \kappa/(\sqrt{n} \epsilon)+ c_3\left(r^2 \sqrt{n} /d + r \log(nd)^2 \right)$, where $c_5 = \max\{c_1 + c_4\sqrt{C_1}, 10\rho_+$\}, we can bound the first probability of \eqref{eq:beta_composite_alternative2} by 
\begin{align}
    &\mathbb{P}\left(\|A_n+v\|^2_2 \leq \left(\sqrt{\mathrm{tr}\{G(\beta^*)\}} +q \right)^2 \right) \leq \mathbb{P}\left(\|A_n\|^2_2 \leq \mathrm{tr}\{G(\beta^*)\} - \frac{c_5}{\sqrt{d}} \right) \notag\\
    &\hspace{4cm}+ \mathbb{P} \left( 2A_n^\top v \leq \left(\sqrt{\mathrm{tr}\{G(\beta^*)\}} + q \right)^2 - \|v\|_2^2 -  \mathrm{tr}\{G(\beta^*)\} + \frac{c_5}{\sqrt{d}} \right).
    \label{eq:beta_comp_shifted}
\end{align}
The first term of \eqref{eq:beta_comp_shifted} can be bounded by
\begin{align*}
\mathbb{P} \left(  \left| \|A_n\|^2_2 - \mathrm{tr}\{G(\beta^*)\} \right| \geq \frac{c_5}{\sqrt{d}} \right) &\leq \frac{\mathrm{Var}(\|A_n\|_2^2)}{c_5^2/d} =  \frac{4 \mathrm{tr}\{\mathrm{Cov}(A_n)^2\} + c/n}{c_5^2/d} \\
&\leq \frac{4 \rho_+^2/d + c/n}{c_5^2/d} \leq \frac{1}{20} 
\end{align*}
where the first line is by Chebyshev's inequality and \Cref{lemma:vector_expvar} and the second line holds by \Cref{assp:eigenvalue} and $n$ large enough. 

Under \Cref{assp:eigenvalue} and large enough absolute constants $C_1, C_2$ in the conditions of \Cref{lemma:scoretest_theoretical}, we can bound 
\begin{align*}
\sqrt{\mathrm{tr}\{G(\beta^*)\}} \left(\frac{1}{\sqrt{d}} + \frac{\kappa}{\sqrt{n} \epsilon}+ \frac{r^2 \sqrt{n}}{d} + r \log(nd)^2 \right) +  \frac{c_5}{\sqrt{d}} + \frac{\kappa^2}{n \epsilon^2} + \left(\frac{r^2\sqrt{n}}{d}  + r \log(nd)^2 \right)^2  \lesssim \frac{nr^2}{d^2},
\end{align*}
which leads to  
\begin{align*}
    \left(q - \sqrt{\mathrm{tr}\{G(\beta^*)\}} \right)^2 - \|v\|^2_2 - \mathrm{tr}\{G(\beta^*)\} + \frac{c_5}{\sqrt{d}} = 2q\sqrt{\mathrm{tr}\{G(\beta^*)\}} + q^2 + \frac{c_5}{\sqrt{d}} -\|v\|^2_2 \asymp -\frac{nr^2}{d^2}.
\end{align*}
Since $A_n^\top v$ is mean zero and  $\mathrm{Var}(A_n^\top v) = v^\top G(\beta^*) v \asymp nr^2/d^3$, by Chebyshev's inequality, we can upper bound the second term of \eqref{eq:beta_comp_shifted} by 
\begin{equation*}
    \mathbb{P} \left( |2A_n^\top v| \geq \frac{nr^2}{d^2}  \right) \leq \frac{4 \mathrm{Var}(A_n^\top v)}{(nr^2/d^2)^2} \lesssim \frac{d}{nr^2} \lesssim \frac{1}{\sqrt{d}},
\end{equation*}
where the last inequality holds under \eqref{eq:beta_composite_assumption}. We can therefore bound the type-II error probability by
\begin{align*}
    \mathbb{P}_{\mathrm{H}_1} \left( \phi=0 \right) \leq \frac{c}{\sqrt{n}} + \frac{c}{\sqrt{d}} + \frac{1}{10} + \frac{\alpha}{2} \leq \frac{1}{6}
\end{align*}
where the second inequality holds for suitably chosen $\alpha$ and $n, d$ large enough.
\end{proof}

\subsection{Sensitivity of the trace estimator} \label[appendix]{sec-trace-sensitivity}

\begin{lemma} \label[lemma]{lemma:trace_sensitivity}
Let $H(\,\cdot\, ; \beta_0)$ be as defined in \eqref{eq:truncated_Hessian}. For any two size $n$ datasets $D, D'$ that differ in one entry, we have that 
\begin{align}
    & \left|\mathrm{tr}\{H(D; \beta_0)\} - \mathrm{tr}\{H(D'; \beta_0)\} \right| \notag \\
    &\leq\frac{C_Z^2}{n}
\Bigg\{2 +\exp\!\left(2C_Z\|\beta_0\|_2\right)(6+4\log (n))
+2\exp\!\left(4C_Z\|\beta_0\|_2\right) \notag \\
&\hspace{1.5cm}+\frac{\exp\!\left(3C_Z\|\beta_0\|_2\right)(1+\log (n))
+6\exp\!\left(2C_Z\|\beta_0\|_2\right)}{n}
+\frac{2\exp\!\left(4C_Z\|\beta_0\|_2\right)(1+\log (n))}{n^2}
\Bigg\}.
    \label{eq:trace_sensitivity}
\end{align}   
The right-hand side of \eqref{eq:trace_sensitivity} therefore serves as a bound for the sensitivity of the trace of $H(\,\cdot\, ; \beta_0)$ and we denote this quantity by $K(n, \beta_0)$ in the main text. 
\end{lemma}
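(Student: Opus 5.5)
The plan is to rewrite the trace as a sum over event times of a weighted variance. For each $t$, set $w_j(t) = Y_j(t)\exp(\beta_0^\top Z_j(t))$, $S^{(k)}(t) = \sum_j w_j(t) Z_j(t)^{\otimes k}$ (with $S^{(2)}$ replaced by $\sum_j w_j\|Z_j\|_2^2$ after taking traces), and $\bar Z = S^{(1)}/S^{(0)}$. Then
\begin{equation*}
\mathrm{tr}\{H(D;\beta_0)\} = \frac{1}{n}\sum_{i=1}^n \Delta_i\, V(T_i), \qquad V(t) = \frac{\sum_j w_j(t)\|Z_j(t)\|_2^2}{S^{(0)}(t)} - \|\bar Z(t)\|_2^2 .
\end{equation*}
Under \Cref{assp:covariates} we have $0\le V(t)\le C_Z^2$, and $e^{-C_Z\|\beta_0\|_2}\le w_j\le e^{C_Z\|\beta_0\|_2}$ whenever $Y_j(t)=1$. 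Without loss of generality, $D$ and $D'$ differ only in the $n$-th observation.

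The difference then splits into two parts. The first is the change in the outer integrator: the terms $\Delta_n V(T_n)$ and $\Delta'_n V'(T'_n)$ together contribute at most $2C_Z^2/n$, which gives the leading constant $2$. The second is, for every other $i$ with $\Delta_i=1$, the change $|V(T_i)-V'(T_i)|$ in the weighted variance when one atom of the at-risk set is changed.

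For fixed $t$, a single replaced atom changes $S^{(0)}$ by at most $e^{C_Z\|\beta_0\|_2}$ and changes $S^{(1)}$ and the second-moment sum by at most $e^{C_Z\|\beta_0\|_2}C_Z$ and $e^{C_Z\|\beta_0\|_2}C_Z^2$ respectively. Writing $V = a - \|b\|^2$ and telescoping the ratios via $a/s - a'/s' = (a-a')/s + a'(s'-s)/(ss')$ together with $\|b\|^2-\|b'\|^2 = (b-b')^\top(b+b')$, we get
\begin{equation*}
|V(t)-V'(t)| \lesssim \frac{C_Z^2\, e^{2C_Z\|\beta_0\|_2}}{S^{(0)}(t)\wedge S'^{(0)}(t)} .
\end{equation*}
Higher powers $e^{3C_Z\|\beta_0\|_2}$ and $e^{4C_Z\|\beta_0\|_2}$ appear from products of two such perturbations, and these produce the $1/n$ and $1/n^2$ correction terms in \eqref{eq:trace_sensitivity}. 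Since $S^{(0)}(t)\ge e^{-C_Z\|\beta_0\|_2}R(t)$, where $R(t)=\sum_j Y_j(t)$ is the at-risk count, each such term is of order $C_Z^2 e^{2C_Z\|\beta_0\|_2}/R(t)$ up to the exponent bookkeeping.

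To sum these terms, order the observed times. The $k$-th smallest observed time has at least $n-k+1$ subjects at risk, with a shift of at most one because of the replaced individual. Hence
\begin{equation*}
\sum_i \Delta_i \frac{1}{R(T_i)} \le \sum_{k=1}^n \frac{1}{n-k+1} \le 1+\log n ,
\end{equation*}
and this is the harmonic-sum argument used in Lemma~9 of \cite{FDPCox} for the gradient sensitivity. Multiplying by $1/n$ gives the $\exp(2C_Z\|\beta_0\|_2)(c+c\log n)/n$ term. The terms with $e^{4C_Z\|\beta_0\|_2}$ and without $\log n$ come from the boundary case where $R(t)$ is one or two, where the bound $V\le C_Z^2$ must be used directly.

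The main obstacle is the constant bookkeeping needed to reproduce exactly $6+4\log n$, $2e^{4C_Z\|\beta_0\|_2}$ and the $1/n$, $1/n^2$ remainders. In particular, one must handle separately whether the replaced individual is at risk at $t$ in $D$, in $D'$, or in both. I would follow the case split in the proof of Lemma~9 of \cite{FDPCox}, applied to the three sums $S^{(0)}$, $S^{(1)}$ and the second-moment sum. The order $\log(n)/n$ follows robustly from the steps above, and only the precise constants require this careful tracking.
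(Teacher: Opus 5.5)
The overall plan matches the paper's. The replaced subject's own event contributes at most $2C_Z^2/n$ (the paper's terms (I)+(II)). Every other event contributes through the change of the weighted variance at that time (terms (III)+(IV)). These changes are then summed using the harmonic bound over ordered observed times.

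The gap is that the lemma is an explicit inequality, not a rate. $K(n,\beta_0)$ is used verbatim as the Laplace scale in \eqref{eq:trace_estimator_def}, so an ``$\lesssim$'' bound does not deliver privacy, and the constants you defer are the content of the statement.

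Moreover, the route you describe cannot produce them. Write $A=e^{2C_Z\|\beta_0\|_2}$. Bounding the second-moment ratio and the squared weighted mean separately with your telescoping identities gives per-time bounds of roughly $2C_Z^2A/R(t)$ and $6C_Z^2A/R(t)$. That yields a leading term near $8AC_Z^2\log(n)/n$, while \eqref{eq:trace_sensitivity} has $4AC_Z^2\log(n)/n$, and the $4$ is essentially attained. To see this, take $\beta_0=-\|\beta_0\|_2u$ and let all other subjects have $Z_j\equiv C_Zu$ and fail at distinct times. In $D$, let the replaced subject be censored before every other observed time; in $D'$, give it $Z\equiv -C_Zu$ and keep it at risk throughout, censored at $1$. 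The trace then changes by $\frac{4C_Z^2}{n}\sum_{m=1}^{n-1}\frac{Am}{(m+A)^2}$, which is $\frac{4C_Z^2A\log n}{n}(1+o(1))$. So no bookkeeping of separately bounded pieces gives $K(n,\beta_0)$ for large $n$.

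The same applies to the paper's (III)/(IV) split if every step is done exactly. The cross term in $b_1$ is genuinely of order $A/R(t)$. Bounding $\{S^{(0)}(D)+S^{(0)}(D')\}/S^{(0)}(D)$ by $2+e^{C_Z\|\beta_0\|_2}/n$ in $b_2$ fails when few subjects are at risk. So the paper's intermediate constants are not a safe template.

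Two smaller points. First, the numerator of your per-time bound should be $e^{C_Z\|\beta_0\|_2}$; with $e^{2C_Z\|\beta_0\|_2}/S^{(0)}$ you only get $e^{3C_Z\|\beta_0\|_2}/R(t)$. Second, the paper's $2e^{4C_Z\|\beta_0\|_2}$ comes from $\sum_k k^{-2}\le 2$ applied to a piece of order $A^2/R(t)^2$, not from risk sets of size one or two.

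The missing idea is to control the change of the weighted variance as a single quantity. Let $\mathcal{S}(t)$ be the at-risk subjects other than the replaced one, with total weight $W$, weighted mean $m$ and weighted variance $V_o$. Adding a subject with weight $w$ and covariate $z$ gives exactly
\[
V_+-V_o=\frac{w}{W+w}\Big\{\frac{W}{W+w}\|z-m\|_2^2-V_o\Big\}.
\]
Use $\|z-m\|_2\le C_Z+\|m\|_2$, $V_o\le C_Z^2-\|m\|_2^2$, $(C_Z+\|m\|_2)^2+C_Z^2-\|m\|_2^2\le 4C_Z^2$, and $w/W\le A/|\mathcal{S}(t)|$. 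When the subject is at risk in both $D$ and $D'$, pair the positive part of one update with the negative part of the other. This gives $|V_D(t)-V_{D'}(t)|\le C_Z^2\min\{1,4A/|\mathcal{S}(t)|\}$.

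Along the ordered times of the other $n-1$ subjects, $|\mathcal{S}(T_{(k)})|\ge n-k$. Their events therefore contribute at most $\frac{C_Z^2}{n}\sum_{m=1}^{n-1}\min\{1,4A/m\}\le \frac{C_Z^2}{n}(4A+4A\log n)$. The sensitivity is then at most $\frac{C_Z^2}{n}(2+4A+4A\log n)$, which lies below the right-hand side of \eqref{eq:trace_sensitivity} because $A\ge 1$.
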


\begin{proof}
    We can rewrite  
\begin{equation*}
 H(D; \beta_0)  = \frac{1}{n} \sum_{i=1}^n \int_0^1 \left(\frac{S^{(2)}(D; t, \beta_0)}{{S}^{(0)}(D; t, \beta_0)} - \left\{ \frac{S^{(1)}(D; t, \beta_0)}{S^{(0)}(D; t, \beta_0)} \right\}^{\otimes 2}\right)  \dint N_i(t),
\end{equation*}
in which we denote
\begin{equation*}
    S^{(k)}(D; t, \beta_0) = \frac{1}{n}\sum_{i=1}^n Y_i(t) \exp(\beta_0^\top Z_i(t)) Z_i(t) ^{\otimes k}, \quad k \in \{0, 1, 2\}.
\end{equation*}

Without loss of generality, assume that the different entry is in the first observation, so that $D=(T_1, \Delta_1, Z_1) \cup \{(T_i, \Delta_i, Z_i)\}_{i=2}^n$ and $D'=(T_1', \Delta_1', Z_1') \cup \{(T_i, \Delta_i, Z_i)\}_{i=2}^n$. By the triangle inequality we have that 
\begin{align}
    &|\mathrm{tr}\{H(D; \beta_0)\} - \mathrm{tr}\{H(D'; \beta_0)\}| \notag\\
    &\leq \frac{1}{n} \int_0^1 \left| \mathrm{tr} \left(\frac{S^{(2)}(D; t, \beta_0)}{S^{(0)}(D; t, \beta_0)} - \left\{ \frac{S^{(1)}(D; t, \beta_0)}{S^{(0)}(D; t, \beta_0)} \right\}^{\otimes 2} \right) \right| \dint N_1(t)  \notag\\
    &\quad + \frac{1}{n}  \int_0^1 \left| \mathrm{tr} \left(\frac{S^{(2)}(D'; t, \beta_0)}{S^{(0)}(D'; t, \beta_0)} - \left\{ \frac{S^{(1)}(D'; t, \beta_0)}{S^{(0)}(D'; t, \beta_0)} \right\}^{\otimes 2} \right) \right|\dint N_1'(t) \notag \\ 
    &\quad + \frac{1}{n} \sum_{i=2}^n \int_0^1  \left| \mathrm{tr} \left(\frac{S^{(2)}(D; t, \beta_0)}{S^{(0)}(D; t, \beta_0)}  - \frac{S^{(2)}(D'; t, \beta_0)}{S^{(0)}(D'; t, \beta_0)} \right)\right| \dint N_i(t) \notag\\
    &\quad + \frac{1}{n} \sum_{i=2}^n \int_0^1 \left| \mathrm{tr} \left(\frac{S^{(1)}(D; t, \beta_0)^{\otimes 2}}{S^{(0)}(D; t, \beta_0)^2} - \frac{S^{(1)}(D'; t, \beta_0)^{\otimes 2}}{S^{(0)}(D'; t, \beta_0)^2} \right)\right| \dint N_i(t) \notag\\ 
    &= (I) + (II) + (III) + (IV).     \label{eq:trace_sensitivity_triangle}
\end{align}

\medskip
\noindent \textbf{Term (I) and (II):} we can bound each of $(I)$ and $(II)$ in \eqref{eq:trace_sensitivity_triangle} by $C_Z^2/n$. This is because by \Cref{assp:covariates}, we have that as weighted averages, for all $t \in [0, 1]$
\begin{equation}
    \mathrm{tr} \left(\frac{S^{(2)}(D; t, \beta_0)}{S^{(0)}(D; t, \beta_0)} \right) =  \frac{\sum_{i=1}^n Y_i(t) \exp(Z_i(t)^\top \beta_0)) \|Z_i\|_2^2}{\sum_{i=1}^n Y_i(t) \exp(Z_i(t)^\top \beta_0)}   \in [0, C_Z^2]
    \label{eq:deg2_bound}
\end{equation}
and similarly 
\begin{equation}
     \mathrm{tr} \left( \left\{\frac{S^{(1)}(D; t, \beta_0)}{S^{(1)}(D; t, \beta_0)} \right\}^{\otimes 2} \right) = 
     \left\|\frac{S^{(1)}(D; t, \beta_0)}{S^{(0)}(D; t, \beta_0)}\right\|_2^2 \in [0, C_Z^2].
     \label{eq:deg1_bound}
\end{equation}

\medskip
\noindent \textbf{Term (III):} we can write the integrand of $(III)$ as 
\begin{align*}
    & \left|\mathrm{tr} \left\{\frac{S^{(2)}(D; t, \beta_0)}{S^{(0)}(D; t, \beta_0)} - \frac{S^{(2)}(D'; t, \beta_0)}{S^{(0)}(D'; t, \beta_0)} \right\} \right| \\
    & = \left|\mathrm{tr} \left\{\frac{S^{(2)}(D; t, \beta_0)S^{(0)}(D'; t, \beta_0) - S^{(2)}(D'; t, \beta_0) S^{(0)}(D; t, \beta_0)}{S^{(0)}(D; t, \beta_0)S^{(0)}(D'; t, \beta_0)} \right\} \right| \\
    &\leq \frac{ |\mathrm{tr} \{S^{(2)}(D; t, \beta_0) - S^{(2)}(D'; t, \beta_0)\}|}{S^{(0)}(D; t, \beta_0)} + \frac{ \left|\mathrm{tr} \left\{S^{(2)}(D'; t, \beta_0) \left(S^{(0)}(D; t, \beta_0) - S^{(0)}(D'; t, \beta_0)\right) \right\} \right|}{S^{(0)}(D'; t, \beta_0) S^{(0)}(D;t, \beta_0)} \\
    &=a_1(t) + a_2(t).
\end{align*}

Integrating $a_1(t)$ against the jump process, we have 
\begin{align}
    \frac{1}{n} \sum_{i=1
    }^n \int_0^1 a_1(t) \dint N_i(t) &= \frac{1}{n} \sum_{i=1
    }^n \int_0^1 \frac{ |\mathrm{tr} \{Y_1(t) \exp(\beta_0^\top Z_1(t))Z_1^{\otimes 2} - Y_1'(t) \exp(\beta_0^\top Z_1'(t)) Z_1'^{\otimes 2}\}|}{\sum_{i=1}^n Y_i(t) \exp(\beta_0^\top Z_i(t))} \dint N_i(t) \notag \\
    &\leq \frac{\exp(2C_Z \|\beta_0\|_2)C_Z^2}{n} \int_0^1 \frac{1}{\sum_{j=1}^n Y_i(t)} \dint N_i(t) \notag \\
    &\leq \frac{\exp(2C_Z\|\beta_0\|_2)C_Z^2(1 + \log (n))}{n}.
    \label{eq:sens_log_example}
\end{align}
The final inequality is by noting that there are at most $n$ possible jump times, and at each of these jump times, $\sum_{j=1}^n Y_j(t)$ takes a distinct value in $\{1, \dots, n\}$ as the failure times are continuous random variables, and we may bound 
\begin{equation*}
    \sum_{i=1}^n \frac{1}{i} \leq 1 + \int_1^{n} \frac{1}{x} \dint x \leq 1 + \log(n).
\end{equation*}

We next bound $a_2(t)$ by 
\begin{align*}
    c_2(t) &\leq C_Z^2 \left| 1 - \frac{Y_1'(t) \exp(\beta_0^\top Z_1'(t)) + \sum_{j=2}^n Y_j(t) \exp(\beta_0^\top Z_j(t))}{Y_1(t) \exp(\beta_0^\top Z_1(t)) + \sum_{j=2}^n Y_j(t) \exp(\beta_0^\top Z_j(t))} \right| \\
    &\leq \frac{C_Z^2 |Y_1(t) \exp(\beta_0^\top Z_1(t)) - Y_1'(t) \exp(\beta_0^\top Z_1'(t))|}{Y_1(t) \exp(\beta_0^\top Z_1(t)) + \sum_{j=2}^n Y_j(t) \exp(\beta_0^\top Z_j(t))} \leq \frac{C_Z^2 \exp(2C_Z\|\beta_0\|_2)}{\sum_{j=1}^n Y_j(t)}
\end{align*}
where the first inequality is by \eqref{eq:deg2_bound}.

We therefore have that 
\begin{align*}
    (III) \leq \frac{1}{n} \sum_{i=1}^n \int_0^1 \left\{a_1(t) + a_2(t)\right\} \dint N_i(t) \leq  \frac{2 \exp(2C_Z\|\beta_0\|_2)C_Z^2(1 + \log (n))}{n}.
\end{align*}

\bigskip
\noindent \textbf{Term (IV):} we can write the integrand of $(IV)$ as 
    \begin{align*}
   &\frac{ \left|\mathrm{tr} \left\{S^{(1)}(D; t, \beta_0)^{\otimes 2} S^{(0)}(D'; t, \beta_0)^2 - S^{(1)}(D'; t, \beta_0)^{\otimes 2} S^{(0)}(D; t, \beta_0)^2 \right\} \right|}{S^{(0)}(D';t, \beta_0)^2 S^{(0)}(D; t, \beta_0)^2} \\
   &\leq \frac{ \left|\mathrm{tr} \left\{S^{(1)}(D; t, \beta_0)^{\otimes 2}  - S^{(1)}(D'; t, \beta_0)^{\otimes 2} \right\} \right|}{S^{(0)}(D;t, \beta_0)^2} + \frac{ \left|\mathrm{tr} \left\{S^{(1)}(D'; t, \beta_0)^{\otimes 2} \left(S^{(0)}(D;t, \beta_0)^2 - S^{(0)}(D'; t, \beta_0)^2\right) \right\} \right|}{S^{(0)}(D'; t, \beta_0)^2S^{(0)}(D;t, \beta_0)^2} \\
   &= b_1(t) + b_2(t).
\end{align*}

We first note that 
\begin{align*}
    S^{(1)}(D; t, \beta_0)^{\otimes 2} &= \frac{1}{n^2} \Bigg( Y_1(t) \exp(2\beta_0^\top Z_1(t)) Z_1(t)^{\otimes2} +  U_1(t)^{\otimes 2}  \\
    &\hspace{2cm} + Y_1(t) \exp(\beta_0^\top Z_1(t)) Z_1(t) U_1(t)^\top + U_1(t) Y_1(t) \exp(\beta_0^\top Z_1(t)) Z_1(t)^\top  \Bigg),
\end{align*}
where $U_1(t) = \sum_{i=2}^n Y_i(t) \exp(\beta_0^\top Z_i(t)) Z_i(t)$. We can therefore decompose the bound for $d_1(t)$ into bounding 
\begin{align*}
   b_1(t) &\leq \frac{\mathrm{tr}\left\{Y_1(t) \exp(2\beta_0^\top Z_1(t)) Z_1(t)^{\otimes 2} - Y_1'(t) \exp(2\beta_0^\top Z_1'(t)) Z_1'(t)^{\otimes 2} \right\}}{n^2 S^{(0)}(D; t, \beta_0)^2} + \frac{2 U_1(t)^\top V_1(t) }{n^2 S^{(0)}(D; t, \beta_0)^2} \\
   &= b_{11}(t) + b_{12}(t), 
\end{align*}
in which we introduce the notation 
\begin{equation*}
    V_1(t) = Y_1(t) \exp(\beta_0^\top Z_1(t)) Z_1(t) S^{(0)}(D'; t, \beta_0)^2 - Y_1'(t) \exp(\beta_0^\top Z_1'(t)) Z_1'(t) S^{(0)}(D; t, \beta_0)^2.
\end{equation*}

By similar reasoning leading to \eqref{eq:sens_log_example}, we may bound 
\begin{align*}
    \frac{1}{n} \sum_{i=1}^n \int_0^1 b_{11}(t) \dint N_i(t) \leq \frac{\exp(4C_Z\|\beta_0\|_2)C_Z^2}{n} \sum_{i=1}^n \frac{1}{i^2} \leq \frac{2\exp(4C_Z\|\beta_0\|_2)C_Z^2 }{n}. 
\end{align*}

We may bound $b_{12}(t)$ by 
\begin{align*}
    b_{12}(t) &\leq \frac{2 C_Z \|V_1(t)\|_2}{nS^{(0)}(D; t, \beta_0)} \\
    &= \frac{2C_Z}{n}  \left\| \frac{ Y_1(t) \exp(\beta_0^\top Z_1(t)) Z_1(t) S^{(0)}(D'; t, \beta_0)^2 - Y_1'(t) \exp(\beta_0^\top Z_1'(t)) Z_1'(t) S^{(0)}(D; t, \beta_0)^2}{S^{(0)}(D; t, \beta_0)}\right\|_2 \\
    &\leq \frac{2C_Z}{n} \left\|\frac{Y_1(t) \exp(\beta_0^\top Z_1(t)) Z_1(t) \{S^{(0)}(D'; t, \beta_0)^2-S^{(0)}(D; t, \beta_0)^2\}}{S^{(0)}(D; t, \beta_0)} \right\|_2 \\
    &\quad + \frac{2 C_Z}{n} \left\| \frac{S^{(0)}(D; t, \beta_0)^2 \left\{ Y_1'(t) \exp(\beta_0^\top Z_1'(t)) Z_1'(t) - Y_1(t) \exp(\beta_0^\top Z_1(t)) Z_1(t) \right\}}{S^{(0)}(D; t, \beta_0)}\right\|_2\\
    &\leq  \frac{2C_Z}{n}  \Bigg\| \left(2+\frac{Y_1'(t) \exp(\beta_0^\top Z_1'(t)) - Y_1(t) \exp(\beta_0^\top Z_1(t))}{\sum_{j=1}^n Y_j(t) \exp(\beta_0^\top Z_j(t))} \right) \\
    &\hspace{5cm} \times Y_1(t) \exp(\beta_0^\top Z_1(t)) Z_1(t) \left\{S^{(0)}(D'; t, \beta_0) -S^{(0)}(D; t, \beta_0) \right\} \Bigg\|_2 \\
    &\quad +  \frac{4 C_Z^2 \exp(C_Z\|\beta_0\|_2)}{n} S^{(0)}(t, \beta_0) \\
    &\leq \frac{2C_Z^2 \exp(C_Z \|\beta_0\|_2)}{n} \left( 2+ \frac{\exp(C_Z \|\beta_0\|_2)}{n S^{(0)}(D; t, \beta_0)}\right) \frac{\exp(C_Z \|\beta_0\|_2)}{n} + \frac{4 C_Z^2 \exp(C_Z\|\beta_0\|_2)}{n} S^{(0)}(t, \beta_0) , 
\end{align*}
where the first inequality is due to \eqref{eq:deg1_bound}. We can therefore bound
\begin{align*}
    \frac{1}{n}\sum_{i=1}^n \int_0^1 b_{12}(t) \dint N_i(t) &\leq \frac{4C_Z^2 \exp(2C_Z\|\beta_0\|_2)}{n^2} + \frac{2C_Z^2 \exp(4C_Z\|\beta_0\|_2 )(1+\log(n))}{n^3} \\
    &\quad +\frac{2 C_Z^2 \exp(2C_Z\|\beta_0\|_2)(n+1)}{n^2}.  
\end{align*}

We can bound $b_2(t)$ similarly:
\begin{align*}
    b_2(t) &\leq \mathrm{tr}\left\{ \frac{S^{(1)}(D'; t, \beta_0)^{\otimes 2}}{S^{(0)}(D;t, \beta_0)^2}\right\}  \frac{S^{(0)}(D; t, \beta_0)+S^{(0)}(D'; t, \beta_0)}{S^{(0)}(D; t, \beta_0)} \left|\frac{S^{(0)}(D; t, \beta_0) - S^{(0)}(D'; t, \beta_0) } {S^{(0)}(D; t, \beta_0)} \right|\\
    &\leq C_Z^2 \left(2 + \frac{ \exp(C_Z\|\beta_0\|_2)}{n} \right) \frac{\exp(C_Z\|\beta_0\|_2)}{nS^{(0)}(D; t, \beta_0)};
\end{align*}
the second inequality is due to \eqref{eq:deg1_bound} and \eqref{eq:sens_log_example}. Integrating against the jump process leads to
\begin{align*}
    \frac{1}{n} \sum_{i=2}^n \int_0^1 b_2(t) \dint N_i(t) \leq \frac{2C_Z^2 \exp(2C_Z \|\beta_0\|_2)(1+ \log(n))}{n} + \frac{C_Z^2 \exp(3C_Z\|\beta_0\|_2)(1+\log(n))}{n^2}. 
\end{align*}

We thus have 
\begin{align*}
    (IV) &\leq \frac{1}{n} \sum_{i=1}^n \int_0^1 \left\{b_{11}(t) + b_{12}(t) + b_2(t) \right\} \dint N_i(t) \\
&\leq\frac{C_Z^2}{n}
\Bigg\{\exp\!\left(2C_Z\|\beta_0\|_2\right)(4+2\log n)
+2\exp\!\left(4C_Z\|\beta_0\|_2\right)\\
&\hspace{1.5cm}+\frac{\exp\!\left(3C_Z\|\beta_0\|_2\right)(1+\log n)
+6\exp\!\left(2C_Z\|\beta_0\|_2\right)}{n}
+\frac{2\exp\!\left(4C_Z\|\beta_0\|_2\right)(1+\log n)}{n^2}
\Bigg\}.
\end{align*}

Combining the bounds on terms $(I), (II), (III)$, and $(IV)$ concludes the proof. 
\end{proof}

\subsection{Proof of \texorpdfstring{\Cref{lemma:trace_plugin}}{}}
\label[appendix]{app:trace_plugin}
    \begin{proof} The privacy guarantee for $T(D_1; \beta_0)$ follows from the bound on the sensitivity of $\mathrm{tr}\{H(D; \beta_0)\}$ in \Cref{lemma:trace_sensitivity}, \Cref{lemma:laplace_mechanism}, and post-processing properties of differential privacy \citep[e.g.~Proposition 2.1 in][]{dwork2014algorithmic}. The overall privacy guarantee with respect to $D=D_1 \cup D_2$ then follows from the parallel composition \citep[e.g.~Theorem 2 in][]{Smith2022ParallelComposition}. 

     For the statement on error guarantees with the plug-in estimator, we adapt the proof of \Cref{lemma:scoretest_theoretical} to use $T(D_1; \beta_0) = \mathrm{tr}\{G(\beta^*)\} + \xi$ in place of $\mathrm{tr}\{G(\beta^*)\}$ in the threshold for rejection \eqref{eq:beta_composite_threshold}. Here, $\xi$ is the error term in estimating the trace, and we note that $\mathrm{tr}\{G(\beta^*)\} + \xi \geq 0$ due to the truncating the estimator to be non-negative. 

The effect on the type-I error control of using $T(D; \beta_0)$ would appear as a change in \eqref{eq:scoretest_type1} in the proof of \Cref{lemma:scoretest_theoretical}.  Define the event
\begin{equation*}
    \mathcal{E} = \left\{|\xi| < \frac{\bar{c}_1 \log(dn)^2}{\sqrt{n}} + \frac{K(n, \beta_0)\log(1/\alpha)}{\epsilon} \right\} \subset \left\{|\xi| < \frac{c_1^2}{2d} \right\}.
\end{equation*}
Under the null hypothesis, by \Cref{lemma:trace_estimator}, we have that $\mathbb{P}(\mathcal{E}^c) \leq \alpha + \bar{c}_2/n$ for some absolute constant $\bar{c}_2$. Here, $c_1$ is the absolute constant from the proof of \Cref{lemma:scoretest_theoretical}; the set inclusion is by appropriate choice of absolute constants in the conditions of \Cref{lemma:scoretest_theoretical}.  

Replacing the theoretical $\mathrm{tr}\{G(\beta^*)\}$ in the threshold with with $T(D; \beta_0)$, conditional on $\mathcal{E}$,  \eqref{eq:scoretest_type1} in the proof of \Cref{lemma:scoretest_theoretical} becomes 
\begin{align*}    &\mathbb{P}\left(\|A_n\|_2 > \sqrt{\mathrm{tr}\{G(\beta^*)\} + \xi} + \frac{c_1}{2\sqrt{d}}\right) \\
 &\quad\leq \mathbb{P}\left( \left|\|A_n\|^2_2 - \mathbb{E}[\|A_n\|^2_2] \right| \geq  \sqrt{\frac{c_1 (\mathrm{tr}\{G(\beta^*)\} + \xi)}{d}} + \frac{c_1^2}{d} + \xi\right) \\
     &\quad\leq \frac{4\mathrm{tr}\{G(\beta^*)^2\} + c/n}{(\sqrt{2c_1 \{\mathrm{tr}\{G(\beta^*)\} + c_1^2/d\}/d} + c_1^2/(2d) )^2} \lesssim \frac{1}{c_1} 
\end{align*}
for large enough $n$ and $d$. 

For the type-II error control, the main change lies in 
\eqref{eq:beta_comp_shifted}. Defining $v= -\sqrt{n}G(\beta^*)(\beta_0 - \beta^*)$, $r=\|\beta_0-\beta^*\|_2$, and $q= c_5/\sqrt{d} + \kappa/(\sqrt{n} \epsilon)+ c_3\left(r^2 \sqrt{n} /d + r \right)$ as before, we consider instead 
\begin{align}
    &\mathbb{P}\left(\|A_n+v\|^2_2 \leq \left(\sqrt{\mathrm{tr}\{G(\beta^*)\} + \xi} +q \right)^2 \right) \leq \mathbb{P}\left(\|A_n\|^2_2 \leq \mathrm{tr}\{G(\beta^*)\} - \frac{c_5}{\sqrt{d}} \right) \notag\\
    &\hspace{4cm}+ \mathbb{P} \left( 2A_n^\top v \leq \left(\sqrt{\mathrm{tr}\{G(\beta^*)\} + \xi} + q \right)^2 - \|v\|_2^2 -  \mathrm{tr}\{G(\beta^*)\} + \frac{c_5}{\sqrt{d}} \right).
    \label{eq:beta_comp_shifted_plugin}
\end{align}
The first term of \eqref{eq:beta_comp_shifted_plugin} can be dealt with in the same way as in the proof of \Cref{lemma:scoretest_theoretical}. 

By \Cref{lemma:trace_sensitivity}, we have for some absolute constants $\bar{c}_1, \bar{c}_2, \bar{c}_1'>0$ that the event
\begin{equation*}
    \mathcal{E} = \left\{ |\xi| \leq \bar{c}_1 \left \{  \frac{\log(dn)^2}{\sqrt{n}} + r \left( \frac{d\log(nd)^2}{\sqrt{n}} + 1 \right) \right\} + \frac{K(n, \beta_0)\log(1/\alpha)}{\epsilon} \right\} \subset \left\{ |\xi| \leq \frac{c_1}{d} + \bar{c}_1' r\right\}
\end{equation*}
satisfies $\mathbb{P}(\mathcal{E}^c) \leq \alpha + \bar{c}_2/n$; the set inclusion is under the conditions in \Cref{lemma:scoretest_theoretical} with appropriate choice of absolute constants. Under $\mathcal{E}$ and recalling the notation $\kappa$ defined in \eqref{eq:kappa_notation}, we have that 
\begin{align*}
\xi + \sqrt{\mathrm{tr}\{G(\beta^*)\} + \xi} \left(\frac{1}{\sqrt{d}} +\frac{\kappa}{\sqrt{n} \epsilon} +  \frac{r^2 \sqrt{n}}{d} + r \right) +  \frac{1}{\sqrt{d}} +  \frac{\kappa^2}{n \epsilon^2} + \left(\frac{r^2\sqrt{n}}{d} + r \right)^2 \lesssim\frac{nr^2}{d^2},
\end{align*}
provided that $\epsilon \lesssim 1$.
The rest of the proof can proceed as in \Cref{lemma:scoretest_theoretical}. 
\end{proof}

\begin{lemma}
Let $T(D; \beta_0)$ be as defined in \eqref{eq:trace_estimator_def}. There exists absolute constants $c_1, c_2>0$ such that for any $\beta_0, \beta^* \in \mathbb{B}_{C_\beta}(0) \subset \mathbb{R}^d$ and $\alpha>0$, we have 
    \begin{align*}
        \mathbb{P}\Bigg( \left|T(D; \beta_0) - \mathrm{tr}\{G(\beta^*)\} \right| > &\frac{K(n, \beta_0)\log(1/\alpha)}{\epsilon} \\
        & +c_1 \left \{  \frac{\log(dn)^2}{\sqrt{n}} + \|\beta_0-\beta^*\|_2 \left( \frac{d\log(dn)^2}{\sqrt{n}} + 1 \right) \right\} \Bigg) \leq \alpha + \frac{c_2}{n},   
        \end{align*}
        where $K(n, \beta_0)$ is defined in \Cref{lemma:trace_sensitivity}. 
        \label[lemma]{lemma:trace_estimator}
\end{lemma}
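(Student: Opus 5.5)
The plan is a three-term decomposition. Write $T(D;\beta_0)-\mathrm{tr}\{G(\beta^*)\}$ and use that the truncation $\max\{0,\cdot\}$ is $1$-Lipschitz and $\mathrm{tr}\{G(\beta^*)\}\ge 0$. This gives
\begin{align*}
|T(D;\beta_0)-\mathrm{tr}\{G(\beta^*)\}| &\le \frac{K(n,\beta_0)}{\epsilon}|W'| + \left|\mathrm{tr}\{H(D;\beta_0)\}-\mathrm{tr}\{H(D;\beta^*)\}\right| \\
&\quad + \left|\mathrm{tr}\{H(D;\beta^*)\}-\mathrm{tr}\{G(\beta^*)\}\right|.
\end{align*}
The privacy noise term is handled by the standard Laplace tail bound $\mathbb{P}(|W'|>\log(1/\alpha))\le\alpha$. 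This yields the $K(n,\beta_0)\log(1/\alpha)/\epsilon$ contribution with probability at least $1-\alpha$.

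For the stochastic term, note that $H(D;\beta^*)=-\ddot{\ell}_n(\beta^*)/n$. The trace of a $d\times d$ matrix is at most $d$ times its operator norm. Lemma~\ref{lemma:hessian_convergence} (used already in the proofs of Propositions~\ref{prop:beta_binary_upper} and~\ref{lemma:scoretest_theoretical}) gives
\[
\|\ddot{\ell}_n(\beta^*)+nG(\beta^*)\|\lesssim \log(nd)^2\sqrt n
\]
with probability at least $1-c/n$, so crudely the trace error is $\lesssim d\log(nd)^2/\sqrt n$. This is too large by a factor of $d$ compared with the claimed $\log(dn)^2/\sqrt n$. I would therefore instead bound the trace deviation directly, as a scalar. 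Decompose
\[
\mathrm{tr}\{H(D;\beta^*)\} = \frac1n\sum_i\int_0^1 \frac{\sum_j Y_j e^{\beta^{*\top}Z_j}\|Z_j-\bar Z\|_2^2}{\sum_j Y_j e^{\beta^{*\top}Z_j}}\dint N_i(t).
\]
Then write $\dint N_i=\dint M_i+Y_ie^{\beta^{*\top}Z_i}\dint\Lambda_0$, replace empirical averages $S^{(k)}$ by their expectations, and replace $\bar Z$ by $\mu$. The integrand is bounded by $C_Z^2$ (cf.~\eqref{eq:deg2_bound}), so the martingale part has variance $O(1/n)$ and a Bernstein/Freedman bound gives deviation $O(\sqrt{\log n/n})$. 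The replacement errors are controlled by $\sup_t|S^{(k)}-s^{(k)}|$ for the scalar quantities $S^{(0)}$, $\mathrm{tr}S^{(2)}$, and $\|S^{(1)}\|_2$, together with $\sup_t\|\bar Z-\mu\|_2$. These are $O(\log(n)/\sqrt n)$ with probability $1-c/n$ by Lemma 20 of \cite{FDPCox}, exactly as in \eqref{eq:sample_mean_conv3}. This yields the $c_1\log(dn)^2/\sqrt n$ term.

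For the bias term from evaluating at $\beta_0$ rather than $\beta^*$, use Lemma 3.2 of \cite{huang2013oracle} as in \eqref{eq:perturbed_Hessian}. For positive semidefinite matrices $A,B$ with $\|A-B\|\le\eta\|B\|$, one has $|\mathrm{tr}A-\mathrm{tr}B|\le d\eta\|B\|$. Here $\eta\lesssim\|\beta_0-\beta^*\|_2$ (using boundedness $\|\beta_0-\beta^*\|_2\le 2C_\beta$ to linearise the exponential), and $\|H(D;\beta^*)\|\le\|G(\beta^*)\|+\log(nd)^2/\sqrt n\lesssim 1/d+\log(nd)^2/\sqrt n$ on the event of Lemma~\ref{lemma:hessian_convergence}. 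Hence this term is at most $c\|\beta_0-\beta^*\|_2\{1+d\log(dn)^2/\sqrt n\}$, matching the statement. Alternatively, $|\mathrm{tr}H(\beta_0)-\mathrm{tr}H(\beta^*)|\le\|\beta_0-\beta^*\|_2\sup_\beta\|\nabla_\beta\mathrm{tr}H(\beta)\|_2$, whose gradient is a bounded third-moment weighted average of order $C_Z^3$; this gives the $O(\|\beta_0-\beta^*\|_2)$ bound directly.

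A union bound over the three events finishes the argument with total failure probability at most $\alpha+c_2/n$. I expect the main obstacle to be the stochastic term. One must get a dimension-free $\log(dn)^2/\sqrt n$ rate for the trace, rather than $d$ times the operator-norm rate. This requires redoing the Hessian concentration for the scalar trace functional, using that all trace-type averages are bounded by $C_Z^2$ regardless of $d$, instead of citing Lemma~\ref{lemma:hessian_convergence} directly.
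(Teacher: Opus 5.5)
Your proposal is correct and follows essentially the paper's proof. It uses the same three-way union bound: the Laplace tail for $W'$, the $\beta_0$-versus-$\beta^*$ term via Lemma 3.2 of \cite{huang2013oracle} together with \Cref{lemma:hessian_convergence}, and a dimension-free concentration bound for $\mathrm{tr}\{\ddot{\ell}_n(\beta^*)/n\}+\mathrm{tr}\{G(\beta^*)\}$, which the paper likewise obtains in \Cref{lemma:trace_convergence} by redoing Lemma 18 of \cite{FDPCox} for the scalar trace (martingale part, replacement of $\bar{Z}$ by $\mu$, Hoeffding). Your explicit handling of the truncation (1-Lipschitzness and $\mathrm{tr}\{G(\beta^*)\}\geq 0$), your Freedman bound on the martingale integral, and your alternative deterministic bound $|\mathrm{tr}\{H(D;\beta_0)\}-\mathrm{tr}\{H(D;\beta^*)\}|\lesssim C_Z^3\|\beta_0-\beta^*\|_2$ via the weighted third central moment are all valid, and if anything slightly cleaner than the corresponding steps in the paper.
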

\begin{proof}
We can show the probability bound via a union bound argument: 
\begin{align}
     &\mathbb{P}\Bigg( \left|T(D; \beta_0) - \mathrm{tr}\{G(\beta^*)\} \right| > c_1 \left \{  \frac{\log(dn)^2}{\sqrt{n}} + \|\beta_0-\beta^*\|_2 \left( \frac{d\log(dn)^2}{\sqrt{n}} + 1 \right) \right\} + \frac{K(n, \beta_0)\log(1/\alpha)}{\epsilon}  \Bigg) \notag \\
     &\quad \leq  \mathbb{P} \left(\left|\mathrm{tr}\left\{\ddot{\ell}_n(\beta_0)/n\right\} - \mathrm{tr}\left\{\ddot{\ell}_n(\beta^*)/n\right\} \right| \geq c_1 \|\beta_0-\beta^*\|_2 \left( \frac{d\log(dn)^2}{\sqrt{n}} + 1 \right)  \right) \notag\\
     &\quad \quad + \mathbb{P}\left(\left|\mathrm{tr}\left\{\ddot{\ell}_n(\beta^*)/n\right\} + \mathrm{tr}\{G(\beta^*)\} \right| \geq c_1 \frac{\log(dn)^2}{\sqrt{n}}\right) + \mathbb{P} \left( |W'| \geq \log(1/\alpha) \right) \notag\\
     &\quad = (I) + (II) + (III).    \label{eq:trace_estimator_triangle}
\end{align}

To bound $(I)$, we observe that with probability at least $1 - c/n$, we have that
\begin{align*}
    \frac{1}{n}\left|\mathrm{tr} \left\{\ddot{\ell}_n(\beta^*) - \ddot{\ell}_n(\beta_0) \right\} \right| &\leq \frac{\max \left\{|\exp(2C_Z\|\beta_0 - \beta^*\|_2) - 1|, |\exp(-2 C_Z \|\beta_0 - \beta^*\|_2) - 1| \right\}}{n} \mathrm{tr}\{\ddot{\ell}_n(\beta^*)\}\\
    &\lesssim \frac{d \|\beta_0-\beta^*\|_2}{n} \left\{ \left\|\ddot{\ell}_n(\beta^*) + nG(\beta^*) \right\| + \frac{n}{d} \right\} \\
    &\lesssim \|\beta_0-\beta^*\|_2 \left\{ \frac{d\log(dn)^2}{\sqrt{n}} + 1 \right\}
\end{align*}
The high probability event is for obtaining the final inequality and follows from \Cref{lemma:hessian_convergence}. The first inequality is by Lemma 3.2 of \cite{huang2013oracle} and the second is by the triangle inequality and \Cref{assp:eigenvalue}.  

We can bound the second term in \eqref{eq:trace_estimator_triangle} by $(II) \lesssim \frac{1}{n}$; the proof of this is deferred to \Cref{lemma:trace_convergence}. Finally, we have that $(III) \leq \alpha$ by the tail bound for a standard Laplace random variable. 
\end{proof}

\section{Technical details from \texorpdfstring{\Cref{sec-4}}{}}
\label[appendix]{app:sec-4}
\Cref{app:DP-NA_alg} contains the details of the cumulative hazard estimator used in \Cref{prop:cumulative_upper}. The proof of the upper and lower bounds can be found in Appendices \ref{app:cumulative_upper} and \ref{app:cumulative_lower} respectively. 

\subsection{Cumulative hazard estimator}
The private cumulative hazard estimator used in \Cref{prop:cumulative_upper} can be found in \Cref{alg:DP-NA}.
\label[appendix]{app:DP-NA_alg}
\begin{algorithm}[h!]
\caption{Differentially private Nelson--Aalen estimator}
\label{alg:DP-NA}
\textbf{Input}: Dataset $\{(T_{i}, \Delta_{ i})\}_{i = 1}^{n}$, privacy parameters $\epsilon, \delta >0$.

\begin{algorithmic}[1]
\State Set 
\begin{equation*}
    \hat{p} = \frac{1}{\lfloor 0.05n \rfloor} \sum_{i=1}^{\lfloor 0.05n \rfloor} Y_i(1) + \frac{\sqrt{2 \log(1.25/\delta)}}{n \epsilon}W, \quad W \sim \mathcal{N}(0, 1)
\end{equation*}
\State Set 
\begin{equation*}
    n' = n-\lfloor0.05n\rfloor, \quad c = 0.9\hat{p}, \quad\text{and }  h=\left \lfloor \frac{1}{2} \log_2 \left( \min \{n', (n')^2 \epsilon^2\}\right) \right \rfloor.
\end{equation*}
    \For{$m = 1, \ldots, 2^h$}
        \State Set 
            \begin{equation*}
                x_{h, m} = \int_{(m-1)/2^h}^{m/2^h} \sum_{i=\lfloor0.05n\rfloor+1}^{n} \frac{\dint  N_{i}(t)}{\max \left\{cn', \sum_{i=\lfloor0.05n\rfloor+1}^n Y_i(t) \right\}}
            \end{equation*}
        \For{$l = h-1, h-2, \ldots, 1$}
            \For{$m = 1, \ldots, 2^l$}
                \State Set $x_{l, m} = x_{ l+1, 2m-1} + x_{l+1, 2m}$
            \EndFor
        \EndFor
    \EndFor
    \For{$l = 1, \dots, h$}
        \For{$m = 1, \dots, 2^l$}
            \State Generate independently \begin{equation*}
                W_{l, m} \sim \mathcal{N}\left(0, \left[\frac{1}{c^4} + \frac{3}{c^2}\right]\frac{2 \log(1/\delta)/\epsilon + 1}{(n')^2 \epsilon/h} \right).
            \end{equation*}
            \State Set $x_{ l, m} = x_{ l, m} + W_{l, m}$.
        \EndFor
    \EndFor
\State For any $t \in [0, 1]$, let $(b_1, \dots b_h)$ be the binary representation of $\lfloor 2^h t \rfloor$ and set 
    \begin{equation*}
        \widehat{\Lambda}(t) =  \max \left\{0, \sum_{l=1}^h \mathbbm{1}\{b_l = 1\} x_{l, \, \sum_{k=1}^l 2^{l-k} b_k}\right\}.  
    \end{equation*} 
\State \textbf{Output}: $\widehat{\Lambda}(t)_{t \in [0, 1]}$.
\end{algorithmic}
\end{algorithm}

\subsection{Proof of \texorpdfstring{\Cref{prop:cumulative_upper}}{}}
\label[appendix]{app:cumulative_upper}
\begin{proof}
The privacy guarantees for $\widehat{\Lambda}_1$ and $\widehat{\Lambda}_2$ follow from Theorem 5 of \cite{FDPCox}, so we have that $\phi \in \Phi_{n_1, n_2, \epsilon_1, \epsilon_2, \delta_1, \delta_2}$.  

\bigskip
\noindent \textbf{Type-I error:} suppose that $\Lambda_1(t) = \Lambda_2(t)=\Lambda(t)$ for $t \in [0, 1]$, and let $r$ be as defined in \eqref{eq-two-sample-upper-rate}. By the triangle inequality, we have that 
\begin{equation*}
\left\{
\sup_{t \in [0,1]} |\widehat{\Lambda}_1(t) - \widehat{\Lambda}_2(t)| > \tau
\right\} \subseteq \left\{
\sup_{t \in [0,1]} |\widehat{\Lambda}_1(t) - \Lambda(t)| > \frac{\tau}{2} \right\}
\cup \left\{\sup_{t \in [0,1]} |\widehat{\Lambda}_2(t) - \Lambda(t)| > \frac{\tau}{2} \right\}.
\end{equation*}
By Markov's inequality, we have 
\begin{align}
\label{eq:hazard_Markov}
    \mathbb{P}\left( \sup_{t \in [0, 1]} |\widehat{\Lambda}_1(t) - \Lambda(t)|> \frac{\tau}{2}\right) &\leq \frac{2\mathbb{E} \left[\sup_{t \in [0, 1]} |\widehat{\Lambda}_1(t) - \Lambda(t)| \right]}{\tau} \notag\\ 
    &\leq  \frac{c'}{\tau} \left\{ \frac{1}{\sqrt{n_1}} + \frac{\log_2(\min\{n_1^{1/2}, n_1\epsilon_1\})^2 \log(1/\delta_1)}{n_1 \epsilon_1} \right\}
\end{align}
where the second inequality is from following the proof of Theorem 5 of \cite{FDPCox} for a single server setting. An analogous result holds for  $\widehat{\Lambda}_2$, so choosing appropriate absolute constants for defining $\tau$ in \eqref{eq:two_sample_threshold} leads to the type-I error bound. 

\bigskip
\noindent \textbf{Type-II error:} we now suppose that $\Lambda_1(t)$ and $\Lambda_2(t)$ satisfy $\sup_{t \in [0, 1]} |\Lambda_1(t) - \Lambda_2(t)| > r$. For any $t \in [0, 1]$ we have that
\begin{align*}
    |\widehat{\Lambda}_1(t) - \widehat{\Lambda}_2(t)| &\geq |\widehat{\Lambda}_1(t) - \Lambda_2(t)| - |\widehat{\Lambda}_2(t) - \Lambda_2(t)| \\
    &\geq |\Lambda_1(t) - \Lambda_2(t) | - | \widehat{\Lambda}_1(t) - \Lambda_1(t)|- |\widehat{\Lambda}_2(t) - \Lambda_2(t)|. 
\end{align*}
This pointwise inequality implies that 
\begin{equation*}
    \sup_{t \in [0, 1]} |\widehat{\Lambda}_1(t) - \widehat{\Lambda}_2(t)| \geq \sup_{t \in [0, 1]} |\Lambda_1(t) - \Lambda_2(t)| - \sup_{t \in [0, 1]} | \widehat{\Lambda}_1(t) - \Lambda_1(t)|- \sup_{t \in [0, 1]} |\widehat{\Lambda}_2(t) - \Lambda_2(t)|.
\end{equation*}
We therefore have that 
\begin{align*}
    \mathbb{P}_{\mathrm{H}_1} \left(\phi=0 \right) &= \mathbb{P} \left( \sup_{t \in [0, 1]} |\widehat{\Lambda}_1(t) - \widehat{\Lambda}_2(t)| \leq \tau\right) \\
    &\leq \mathbb{P} \left( \sup_{t \in [0, 1]} |\widehat{\Lambda}_1(t) - \Lambda_1(t)| + \sup_{t \in [0,1]}|\widehat{\Lambda}_2(t) - \Lambda_2(t)| \geq r - \tau \right)\\
    &\leq \mathbb{P} \left( \sup_{t \in [0, 1]} |\widehat{\Lambda}_1(t) - \Lambda_1(t)| \geq \frac{r - \tau}{2} \right) + \mathbb{P} \left( \sup_{t \in [0, 1]} |\widehat{\Lambda}_2(t) - \Lambda_2(t)| \geq \frac{r -\tau}{2} \right) \\
    &\leq \frac{2c'}{r-\tau} \Bigg\{ \frac{1}{\sqrt{n_1}} + \frac{\log_2(\min\{n_1^{1/2}, n_1\epsilon_1\})^2 \log(1/\delta_1)}{n_1 \epsilon_1}  \\
     &\hspace{3cm} +\frac{1}{\sqrt{n_2}}  + \frac{\log_2(\min\{n_2^{1/2}, n_2\epsilon_2\})^2 \log(1/\delta_2)}{n_2 \epsilon_2} \Bigg\}
\end{align*}
where the final line is derived in a similar way to \eqref{eq:hazard_Markov}. It follows that there exists some absolute constant $C$ such that for $r$ satisfying \eqref{eq-two-sample-upper-rate}, the type-II error can be controlled. 
\end{proof}

\subsection{Proof of \texorpdfstring{\Cref{prop:cumulative_lower}}{}}
\label[appendix]{app:cumulative_lower}
\begin{proof}
    Similar to \cite{arias2018remember}, we consider a reduction to the one-sample goodness-of-fit testing problem. For $\mathbb{P}_{\Lambda_1} \in \mathcal{C}$, we define the goodness-of-fit detection boundary as  
\begin{align*}
    \tilde{r}^*(\mathbb{P}_{\Lambda_1},  n, \epsilon, \delta) = \inf \bigg\{ r>0: \,
    &\exists\, (\epsilon, \delta)\text{-differentially private } \phi \text{ s.t. } \mathbb{P}_{\Lambda_1}^{\otimes n}(\phi=1) + \mathbb{P}_{\Lambda_2}^{\otimes n}(\phi=0) < \tfrac{1}{3} \\
    &\quad \forall \mathbb{P}_{\Lambda_2} \in \mathcal{C}: \sup_{t \in [0, 1]} |\Lambda_2(t) - \Lambda_1(t)| > r \bigg\};
\end{align*}
recalling that $\mathcal{C}$ is the set of distributions for the right-censored observations $(T, \Delta)$ on $\mathbb{R}_{\geq 0} \times \{0, 1\}$ that satisfy \Cref{assp:indep_atrisk} in a covariate-less setting.
We can bound the two-sample testing problem rate by that of the one-sample testing problem, the proof of which we defer to \Cref{subsec:onelessthantwo}.  
\begin{lemma}
For all $n_1, n_2 \in \mathbb{N}$ and $\epsilon_1, \epsilon_2 > 0$, $\delta_1, \delta_2 \geq 0$, it holds that 
\label[lemma]{lemma:onelessthantwo}
\begin{equation*}
    \sup_{\mathbb{P}_{\Lambda_1} \in \mathcal{C}} \max \left\{\tilde{r}^*(\mathbb{P}_{\Lambda_1}, n_1, \epsilon_1, \delta_1), \,\tilde{r}^*(\mathbb{P}_{\Lambda_1}, n_2, \epsilon_2, \delta_2) \right\} \leq r^*(n_1, n_2, \epsilon_1, \epsilon_2, \delta_1, \delta_2).
\end{equation*}
\end{lemma}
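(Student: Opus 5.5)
The plan is a reduction: any two-sample test achieving separation $r$ yields a one-sample test achieving the same separation. By symmetry in the roles of the two servers, it is enough to show $\tilde{r}^*(\mathbb{P}_{\Lambda_1}, n_1, \epsilon_1, \delta_1) \le r^*(n_1,n_2,\epsilon_1,\epsilon_2,\delta_1,\delta_2)$ for every fixed $\mathbb{P}_{\Lambda_1} \in \mathcal{C}$. Here sample 1 plays the role of the fixed reference distribution and sample 2 is the unknown one. To handle the second term in the maximum, swap the roles and embed the one-sample problem with $n_2$ observations into server $2$.

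\textbf{Setup.} Fix any $r > r^*(n_1,n_2,\ldots)$. By definition of the infimum, there is a test $\phi(M_1,M_2)\in\Phi_{n_1,n_2,\epsilon_1,\epsilon_2,\delta_1,\delta_2}$ with error below $1/3$ for all pairs in $\mathcal{C}$ with sup-distance greater than $r$, and in particular under every null pair.

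\textbf{Constructing the one-sample test.} Fix a reference $\mathbb{P}_{\Lambda_1}\in\mathcal{C}$. Given the one-sample data $D\sim \mathbb{P}^{\otimes n_2}_{\Lambda}$ of size $n_2$, do the following:
\begin{itemize}
\item draw an independent synthetic dataset $D_1\sim\mathbb{P}_{\Lambda_1}^{\otimes n_1}$ using external randomness, which is possible because $\mathbb{P}_{\Lambda_1}$ is known;
\item set $\tilde\phi(D)=\phi(M_1(D_1),M_2(D))$.
\end{itemize}

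\textbf{Privacy.} Neighbouring datasets $D,D'$ differ only in the input to $M_2$. For any set $A$, condition on $D_1$ and the randomness of $M_1$, apply the $(\epsilon_2,\delta_2)$ guarantee of $M_2$ together with post-processing by $\phi$, and then integrate. This shows $\tilde\phi$ is $(\epsilon_2,\delta_2)$-differentially private.

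\textbf{Errors.} If $\Lambda=\Lambda_1$, the pair $(D_1,D)$ is exactly a two-sample null draw. If $\sup_t|\Lambda-\Lambda_1|>r$, it is an alternative draw with separation greater than $r$, with both distributions in $\mathcal{C}$. The type-I and type-II errors of $\tilde\phi$ therefore equal those of $\phi$ at these pairs, so their sum is below $1/3$. Hence $\tilde r^*(\mathbb{P}_{\Lambda_1},n_2,\epsilon_2,\delta_2)\le r$. Letting $r\downarrow r^*$ and taking the supremum over $\mathbb{P}_{\Lambda_1}$ gives the bound, and the symmetric construction gives the $(n_1,\epsilon_1,\delta_1)$ term.

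\textbf{Main obstacle.} The delicate point is the quantifier structure. The definition of $r^*$ must guarantee a single test $\phi$ that works uniformly over all pairs, including every null pair with common $\Lambda=\Lambda_1$; otherwise the synthetic reference sample would not give a valid null. I would check this carefully against the infimum's definition, and if needed read the "$\forall$" as ranging over both hypotheses. The privacy step with external randomness also needs care, but it is standard.
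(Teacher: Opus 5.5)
Your reduction is correct, and it takes a more direct route than the paper. You build one explicit one-sample test, $\tilde\phi(D)=\phi(M_1(D_1),M_2(D))$, using a synthetic reference sample $D_1\sim\mathbb{P}_{\Lambda_1}^{\otimes n_1}$. You check $(\epsilon_2,\delta_2)$-privacy by conditioning on $(D_1,M_1)$ and applying post-processing. This single test then inherits the two-sample guarantee uniformly over all alternatives, which is exactly what $\tilde r^*$ requires.

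The paper instead argues by contradiction through total variation. When one server's data has the same law under both hypotheses, the two-sample error sum is at least $1-D_{\mathrm{TV}}(M_1(\mathbb{P}_{\Lambda_1}^{\otimes n_1}),M_1(\mathbb{P}_{\Lambda_2}^{\otimes n_1}))$, and the paper plays this against a one-sample infeasibility statement. That route is shorter and fits the two-point construction used afterwards in the proof of \Cref{prop:cumulative_lower}. As written, however, it has two weak points:
\begin{itemize}
\item It posits a single $\Lambda_2$ that defeats every $(\epsilon_1,\delta_1)$-private mechanism at once. But $\tilde r^*>r^*$ only yields, for each private test, some alternative at which that test fails.
\item For its final inequality to hold, the displayed alternative law should be $M_1(\mathbb{P}_{\Lambda_2}^{\otimes n_1})\otimes M_2(\mathbb{P}_{\Lambda_1}^{\otimes n_2})$.
\end{itemize}
Neither issue arises for you, because you apply the two-sample guarantee to one fixed simulated test and write the alternative pair out explicitly.

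Two minor points. Your first paragraph says it suffices to bound the $(n_1,\epsilon_1,\delta_1)$ term with sample~1 as the reference. Your construction (reference in server~1, data in server~2) actually bounds the $(n_2,\epsilon_2,\delta_2)$ term; this is harmless given your symmetric step. Also, your reading of the null as both samples drawn from $\mathbb{P}_{\Lambda_1}$ matches how the paper itself uses $\mathbb{P}_{\mathrm{H}_0}$.
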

   By \Cref{lemma:onelessthantwo}, it suffices to show that 
    \begin{equation*}
        \tilde{r}^* \left(\mathbb{P}_{\Lambda_1}, n, \epsilon, \delta\right) \gtrsim \frac{1}{\sqrt{n}} + \frac{1}{n(\epsilon + \delta)}
    \end{equation*}
for some $\mathbb{P}_{\Lambda_1} \in \mathcal{C}$. Take $\Lambda_1(t) = t$ and $\Lambda_2(t) = \{1 + c(1/\sqrt{n} + 1/(n(\epsilon + \delta))\}t$ for some $c >0$ to be implicitly defined later. We will define $\mathbb{P}_{\Lambda_i}$ to be the distribution where failure times are distributed with cumulative hazard $\Lambda_i$ and censoring times as an independent $\mathrm{Exp}(1)$ random variable. 

Let $\phi$ be any $(\epsilon, \delta)$-differentially private mechanism. Suppose that $1/\sqrt{n} > 1/(n(\epsilon+\delta))$. It follows from a data-processing inequality and Pinsker's inequality that  
\begin{equation*}
    D_{\mathrm{TV}}\left(\phi\mathbb{P}_{\Lambda_1}^{\otimes n}, \phi\mathbb{P}_{\Lambda_2}^{\otimes n} \right) \leq  \sqrt{\frac{n}{2} D_{\mathrm{KL}}(\mathbb{P}_{\Lambda_1}, \mathbb{P}_{\Lambda_2})} \leq 2/3
\end{equation*}
for some small enough absolute constant $c>0$. The last bound is due to $D_{\mathrm{KL}}(\mathbb{P}_{\Lambda_1}, \mathbb{P}_{\Lambda_2}) \leq D_{\mathrm{KL}}(E_1, E_2)$ by a data-processing inequality and that
\begin{align*}
    D_{\mathrm{KL}}(E_1, E_2) &= \log\left( 1 + c \left(\frac{1}{\sqrt{n}} + \frac{1}{n (\epsilon+\delta)} \right) \right) + \frac{1}{1 + c(1/\sqrt{n} + 1/(n (\epsilon+\delta)))} - 1 \\
    &\leq \frac{4c^2}{ \min\{n, n^2 (\epsilon + \delta)^2\}} \lesssim \frac{1}{n}.
\end{align*}
Here, we write $E_i$ for the exponential distribution with cumulative hazard $\Lambda_i$ and the inequality is from 
\begin{equation*}
    \log(1+x) + \frac{1}{1+x} - 1 \leq \frac{x^2 + x + 1 - x - 1}{1+x} \leq x^2, \quad \forall x \geq 0.
\end{equation*}

For the other case where we have $1/(n(\epsilon+\delta)) > 1/\sqrt{n}$, by similar arguments to the proof of \Cref{prop:beta_binary_lower} and the bound on $D_{\mathrm{KL}}(\mathbb{P}_{\Lambda_1}, \mathbb{P}_{\Lambda_2})$ above, we have that 
\begin{align*}
    D_{\mathrm{TV}}(\phi \mathbb{P}_{\Lambda_1}^{\otimes n}, \phi\mathbb{P}_{\Lambda_2}^{\otimes n}) &\leq 2\exp(6n \epsilon D_{\mathrm{TV}}(\mathbb{P}_{\Lambda_1}, \mathbb{P}_{\Lambda_2})) n\delta D_{\mathrm{TV}}(\mathbb{P}_{\Lambda_1}, \mathbb{P}_{\Lambda_2}) \\
    &\quad+  \sqrt{3n \epsilon D_{\mathrm{TV}}(\mathbb{P}_{\Lambda_1}, \mathbb{P}_{\Lambda_2})\{\exp( 6n \epsilon D_{\mathrm{TV}}(\mathbb{P}_{\Lambda_1}, \mathbb{P}_{\Lambda_2})) - 1\}} \\
    &\leq 4 c \exp(12 c) + 12c,
\end{align*}
so we may bound $D_{\mathrm{TV}}(\phi \mathbb{P}_{\Lambda_1}^{\otimes n}, \phi \mathbb{P}_{\Lambda_2}^{\otimes n}) \leq 2/3$ by taking a sufficiently small absolute constant $c$. This concludes the proof since the sum of type-I and -II errors for any test $\phi$ is equal to $1- D_{\mathrm{TV}}(\phi \mathbb{P}_{\Lambda_1}^{\otimes n}, \phi \mathbb{P}_{\Lambda_2}^{\otimes n})$.
\end{proof}

\subsection{Proof of \texorpdfstring{\Cref{lemma:onelessthantwo}}{}}
\label[appendix]{subsec:onelessthantwo}
\begin{proof}
    Fix any $n_1, n_2 \in \mathbb{N}$ and $\epsilon_1, \epsilon_2 > 0$, $\delta_1, \delta_2 \geq 0$. Suppose that there exists $\mathbb{P}_{\Lambda_1} \in \mathcal{C}$ such that  
    \begin{equation*}
        \tilde{r}^*(\mathbb{P}_{\Lambda_1}, n_1, \epsilon_1, \delta_1) > r^*(n_1, n_2, \epsilon_1, \epsilon_2, \delta_1, \delta_2).
    \end{equation*}
    This implies that there exists some cumulative hazard $\Lambda_2$ satisfying $\sup_{t \in [0, 1]}|\Lambda_1 - \Lambda_2| \in (r^*, \tilde{r}^*)$ such that for all $(\epsilon_1, \delta_1)$-differentially private mechanisms $M'$, 
        \begin{equation*}
        D_{\mathrm{TV}} \left(M'(\mathbb{P}_{\Lambda_1}^{\otimes n_1}), M' (\mathbb{P}_{\Lambda_2}^{\otimes n_1}) \right) < 2/3.
    \end{equation*}
    However, since $\sup_{t \in [0, 1]}|\Lambda_1(t) - \Lambda_2(t)| > r^*$, there exists $(\epsilon_1, \delta_1)$-differentially private $M_1$ and $(\epsilon_2, \delta_2)$-differentially private $M_2$ such that 
    \begin{align*}
        1/3 &\geq \mathbb{P}_{\mathrm{H}_0}(\phi=1) + \mathbb{P}_{\mathrm{H}_1}(\phi=0)\\
        &=1 - D_\mathrm{TV} \left( \phi \left\{ M_1(\mathbb{P}_{\Lambda_1}^{\otimes n_1}) \otimes M_2(\mathbb{P}_{\Lambda_1}^{\otimes n_2}) \right\}, \phi \left\{M_1(\mathbb{P}_{\Lambda_2}^{\otimes n_1}) \otimes M_2(\mathbb{P}_{\Lambda_2}^{\otimes n_2}) \right\}\right) \\
        &\geq  1- D_{\mathrm{TV}} \left( M_1(\mathbb{P}_{\Lambda_1}^{\otimes n_1}), M_1(\mathbb{P}_{\Lambda_2}^{\otimes n_1}) \right),
    \end{align*}
    leading to a contradiction. Similarly, we have $ \sup_{\mathbb{P}_{\Lambda_1} \in \mathcal{C}} \tilde{r}^*(\mathbb{P}_{\Lambda_1}, n_2, \epsilon_2, \delta_2) \leq r^*(n_1, n_2, \epsilon_1, \epsilon_2, \delta_1, \delta_2)$.
\end{proof}

\section{Auxiliary lemmas}
\label[appendix]{app:aux_lemmas}
\begin{lemma}[Laplace mechanism, e.g.~Theorem 3.6 in \citealp{dwork2014algorithmic}] \label[lemma]{lemma:laplace_mechanism}
    Let $\mathcal{D}$ be the collection of all datasets and let $f:\mathcal{D} \rightarrow \mathbb{R}^d$. Define the \emph{sensitivity} of $f$ to be $\mathrm{sens}(f) = \sup_{D\sim D'} \|f(D) - f(D')\|_1$, where the supremum is over all pairs $D, D' \in \mathcal{D}$ such that $D$ and $D'$ differ in one entry. Then the mechanism 
    \begin{equation*}
        M(D) = f(D) + \frac{\mathrm{sens}(f)}{\epsilon} W, \quad W_i \overset{\mathrm{i.i.d.}}{\sim} \mathrm{Laplace}(\mu=0, b=1), \, i \in \{1, \dots, d\}
    \end{equation*}
    satisfies $(\epsilon, 0)$-differential privacy. 
\end{lemma}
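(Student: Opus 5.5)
The statement is the Laplace mechanism: for $f:\mathcal{D}\to\mathbb{R}^d$ with $\ell_1$-sensitivity $\Delta=\mathrm{sens}(f)$, the output $M(D)=f(D)+(\Delta/\epsilon)W$ is $(\epsilon,0)$-differentially private. I will compare output densities directly. Assume first that $0<\Delta<\infty$; the case $\Delta=0$ is trivial because $f$ is then constant, so $M(D)$ and $M(D')$ have the same law.

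Set $b=\Delta/\epsilon$. Since the $W_i$ are i.i.d.\ standard Laplace, $M(D)$ has Lebesgue density on $\mathbb{R}^d$
\begin{equation*}
p_D(x)=\prod_{i=1}^d \frac{1}{2b}\exp\left(-\frac{|x_i-f_i(D)|}{b}\right)=(2b)^{-d}\exp\left(-\frac{\|x-f(D)\|_1}{b}\right).
\end{equation*}
Fix neighbouring datasets $D,D'$ differing in one entry. The reverse triangle inequality gives $\|x-f(D')\|_1-\|x-f(D)\|_1\le \|f(D)-f(D')\|_1\le\Delta$ for every $x$. Hence
\begin{equation*}
\frac{p_D(x)}{p_{D'}(x)}=\exp\left(\frac{\|x-f(D')\|_1-\|x-f(D)\|_1}{b}\right)\le \exp(\Delta/b)=e^{\epsilon}
\end{equation*}
pointwise in $x$.

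For any Borel set $A\subseteq\mathbb{R}^d$, integrating this pointwise bound gives $M(R\in A\mid D)=\int_A p_D\le e^\epsilon\int_A p_{D'}=e^\epsilon M(R\in A\mid D')$. This is exactly \Cref{def-cdp} with $\delta=0$.

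The only point that needs care is that the definition of sensitivity uses a supremum, so it bounds every neighbouring pair uniformly. That is what makes the single constant $e^{\epsilon}$ valid for all $D,D'$. No step here is a genuine obstacle; the argument is a standard density-ratio computation.
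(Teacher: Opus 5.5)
Your proof is correct and is the standard density-ratio argument from Theorem 3.6 of \cite{dwork2014algorithmic}, which the paper cites without reproducing a proof. Your argument only uses $\|f(D)-f(D')\|_1\le\Delta$, so it also covers the way the paper applies the lemma, where an upper bound on the sensitivity is used as the noise scale instead of the exact supremum.
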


\begin{lemma}[Corollary of Lemma 16 in \citealp{FDPCox}] Let $\dot{\ell}_n(\beta^*)$ be the score function at the true parameter value. We have that 
\begin{equation*}
        \mathbb{P}\left( \|\dot{\ell}_n(\beta^*) / \sqrt{n}\|_2 > u\right) \lesssim \exp ( -cu^2)
    \end{equation*}
    for some absolute constant $c$. 
\label[lemma]{lemma:true_grad_convergence}
\end{lemma}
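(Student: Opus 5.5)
The plan is to derive the sub-Gaussian tail of $\|\dot{\ell}_n(\beta^*)/\sqrt{n}\|_2$ from the martingale decomposition already used in the proof of \Cref{prop:beta_binary_upper}:
\begin{equation*}
\frac{\dot{\ell}_n(\beta^*)}{\sqrt{n}} = A_n + B_n, \qquad A_n = \frac{1}{\sqrt{n}}\sum_{i=1}^n \int_0^1 \{Z_i(t)-\mu(t)\}\dint M_i(t), \qquad B_n = \frac{1}{\sqrt{n}}\sum_{i=1}^n \int_0^1 \{\mu(t)-\bar{Z}(t)\}\dint M_i(t),
\end{equation*}
where $M_i(t) = N_i(t) - \int_0^t Y_i(s)\exp(\beta^{*\top}Z_i(s))\dint\Lambda_0(s)$. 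Lemma 16 of \cite{FDPCox} should give exactly this concentration, so the statement is essentially a restatement of it. I would verify that its hypotheses reduce to Assumptions~\ref{assp:covariates} and~\ref{assp:indep_atrisk}, and then check that the constant is dimension-free at the scale of interest.

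For $A_n$, the summands $\xi_i = \int_0^1 \{Z_i(t)-\mu(t)\}\dint M_i(t)$ are i.i.d. and mean zero. They are also bounded in norm. Indeed, $\|Z_i(t)-\mu(t)\|_2 \le 2C_Z$, $N_i$ has at most one jump, and the compensator is bounded by $e^{C_ZC_\beta}C_\lambda$. Hence $\|\xi_i\|_2 \le 2C_Z(1+e^{C_ZC_\beta}C_\lambda)$, which is an absolute constant. A vector Hoeffding or Pinelis inequality for bounded i.i.d. vectors in a Hilbert space then gives
\begin{equation*}
\mathbb{P}\big(\|A_n\|_2 > \mathbb{E}\|A_n\|_2 + u\big) \le \exp(-cu^2),
\end{equation*}
with $\mathbb{E}\|A_n\|_2 \le \mathrm{tr}\{G(\beta^*)\}^{1/2} \le \rho_+^{1/2}$ by \Cref{assp:eigenvalue}. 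The mean shift can be absorbed into the constant in $\lesssim$, since for $u \ge 2\rho_+^{1/2}$ we have $u - \rho_+^{1/2} \ge u/2$, and for smaller $u$ the bound is trivial after enlarging the leading constant.

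For $B_n$, I would use the uniform convergence $\sup_t \|\bar{Z}(t)-\mu(t)\|_2 \lesssim \log(n)/\sqrt{n}$, which holds with exponentially small failure probability by Lemma 20 of \cite{FDPCox}. On that event, write $B_n$ through the integrated counting and compensator parts and bound crudely: $\|B_n\|_2 \le \sqrt{n}\,\sup_t\|\bar{Z}-\mu\|_2 \cdot O(1) \lesssim \log n$. This crude bound only yields sub-Gaussianity for $u \gtrsim \log n$, so it is not enough on its own. To do better, I would instead treat $B_n$ as a martingale with bounded predictable integrand and apply a Freedman/Bernstein inequality for vector-valued martingales to $\sqrt{n}B_n$. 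Its predictable quadratic variation is $O(n\sup_t\|\bar{Z}-\mu\|^2_2)$, which is $O(\log^2 n)$ on the good event, and its jumps are bounded by $2C_Z$. This gives $\mathbb{P}(\|B_n\|_2>u)\lesssim \exp(-cu^2)$ for all $u$ of order at least $n^{-1/2}\log n$, and the good event itself fails with probability $\lesssim \exp(-c\log^2 n)$. Combining the two pieces by a union bound with $u/2$ each gives the claim.

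The main obstacle is that $\bar{Z}$ depends on all observations, so $B_n$ is not a sum of independent terms; the martingale route is what gets around this. If the Freedman step proves delicate, the fallback is to cite Lemma 16 of \cite{FDPCox} directly, as the label \emph{corollary} suggests. In that case the only remaining work is to check that its conditions are implied by Assumptions~\ref{assp:covariates}--\ref{assp:eigenvalue} and that its constant does not depend on $d$; this holds because every bound above uses only $C_Z$, $C_\beta$, $C_\lambda$ and $\rho_+$.
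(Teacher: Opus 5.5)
The paper gives no argument for this lemma beyond importing Lemma 16 of \cite{FDPCox}, so your fallback coincides with the paper. Your treatment of $A_n$ via a Hilbert-space Hoeffding/Pinelis bound is also fine. The self-contained route, however, fails at the final union bound for $B_n$. You apply Freedman on the event $\mathcal{G}=\{\sup_t\|\bar Z(t)-\mu(t)\|_2\lesssim \log(n)/\sqrt{n}\}$ and then add $\mathbb{P}(\mathcal{G}^c)\lesssim\exp(-c\log^2 n)$. That term does not shrink with $u$, so what you actually obtain is $\mathbb{P}(\|B_n\|_2>u)\lesssim \exp(-cu^2)+\exp(-c\log^2 n)$. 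This is not $\lesssim\exp(-cu^2)$ once $u\gg\log n$. The statement is not vacuous in that range, because $\|\dot\ell_n(\beta^*)/\sqrt n\|_2$ is only capped deterministically by $2C_Z\sqrt n$. So the window $\log n\ll u\lesssim\sqrt n$ is left uncovered. Your remark that the crude bound $\|B_n\|_2\lesssim\log n$ on $\mathcal{G}$ gives sub-Gaussianity for $u\gtrsim\log n$ has the same defect.

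The fix is to drop the localization entirely. Since $\|\mu(t)-\bar Z(t)\|_2\le 2C_Z$ always, the predictable variation of the $\mathbb{R}^d$-valued martingale $\sqrt n B_n=\int_0^1\{\mu(t)-\bar Z(t)\}\dint\bar M(t)$, with $\bar M=\sum_i M_i$, is bounded \emph{deterministically} by $4C_Z^2e^{C_ZC_\beta}C_\lambda\, n$. Its jumps are bounded by $2C_Z$. A Hilbert-space Freedman/Pinelis inequality then gives $\mathbb{P}(\|B_n\|_2>u)\le 2\exp\{-u^2/(C+cu/\sqrt n)\}$, which is $\lesssim\exp(-c'u^2)$ for $u\lesssim\sqrt n$. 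For larger $u$ the probability is zero, since $\|B_n\|_2\lesssim\sqrt n$ surely. Better still, the same two facts (predictable variation at most $Cn$, jumps at most $2C_Z$) hold for $\dot\ell_n(\beta^*)=\sum_i\int_0^1\{Z_i(t)-\bar Z(t)\}\dint M_i(t)$ itself. A single martingale inequality therefore proves the lemma, and you need neither the $A_n/B_n$ split nor the uniform rate $\sup_t\|\bar Z-\mu\|_2\lesssim\log(n)/\sqrt n$. That rate only shows $B_n$ is negligible, which a constant-scale sub-Gaussian bound does not require.
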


\begin{lemma}[Corollary of Lemma 18 in \citealp{FDPCox}] Let $\ddot{\ell}_n(\beta^*)$ be the Hessian of the log partial-likelihood evaluated at the true parameter value. There exists an absolute constant $c_1>0$ such that 
\begin{equation*}
    \mathbb{P} \left(\left\| \ddot{\ell}_n(\beta^*) + n G^*(\beta^*) \right\| \leq c_1 {\log(dn)^2} n^{1/2} \right) \lesssim  \frac{1}{n},
\end{equation*}
where $G(\beta^*)$ is defined in \eqref{eq-G-def}. 
\label[lemma]{lemma:hessian_convergence}
\end{lemma}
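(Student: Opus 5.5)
We read the displayed event as a typo for its complement. The intended claim, consistent with how the lemma is used in the proofs of \Cref{prop:beta_binary_upper} and \Cref{lemma:scoretest_theoretical}, is
\[
\mathbb{P}\bigl(\|\ddot{\ell}_n(\beta^*) + nG(\beta^*)\| > c_1\log(dn)^2 n^{1/2}\bigr) \lesssim 1/n .
\]
We prove this by splitting $-\ddot{\ell}_n(\beta^*)/n$ into a martingale part and a compensator part, and showing each is within $c\log(dn)^2/\sqrt{n}$ of its target in operator norm, with probability $1-O(1/n)$.

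\textbf{Decomposition.} Write $S^{(k)}(t)=n^{-1}\sum_i Y_i(t)e^{\beta^{*\top}Z_i(t)}Z_i(t)^{\otimes k}$ as in the proof of \Cref{lemma:trace_sensitivity}, with population versions $s^{(k)}(t)=\mathbb{E}[Y(t)e^{\beta^{*\top}Z(t)}Z(t)^{\otimes k}]$. Let $V_n(t)=S^{(2)}/S^{(0)}-(S^{(1)}/S^{(0)})^{\otimes 2}$. Using $\dint N_i = \dint M_i + Y_i e^{\beta^{*\top}Z_i}\dint\Lambda_0$, we get
\[
-\frac{\ddot{\ell}_n(\beta^*)}{n} = \frac1n\sum_{i=1}^n\int_0^1 V_n(t)\dint M_i(t) + \int_0^1\Bigl\{S^{(2)}(t)-\frac{S^{(1)}(t)^{\otimes 2}}{S^{(0)}(t)}\Bigr\}\dint\Lambda_0(t).
\]
The target is
\[
G(\beta^*)=\int_0^1\{s^{(2)}-s^{(1)\otimes 2}/s^{(0)}\}\dint\Lambda_0 .
\]

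\textbf{Martingale term.} $V_n$ is predictable and $0\preceq V_n\preceq C_Z^2 I$. Each jump of the matrix martingale $n^{-1}\sum_i\int V_n\dint M_i$ therefore has operator norm at most $C_Z^2/n$. Its predictable quadratic variation has norm $\lesssim C_Z^4 C_\lambda e^{C_ZC_\beta}/n$, using Assumptions~\ref{assp:covariates} and~\ref{assp:indep_atrisk}. A matrix Freedman inequality for continuous-time martingales (Tropp's, applied to the Hermitian dilation) then bounds its operator norm by $c\sqrt{\log(dn)/n}+c\log(dn)/n$ with probability $1-1/n$.

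\textbf{Compensator term, pointwise.} The difference from $G(\beta^*)$ is bounded by
\[
C_\lambda\sup_t\bigl\{\|S^{(2)}-s^{(2)}\| + \|S^{(1)\otimes2}/S^{(0)}-s^{(1)\otimes 2}/s^{(0)}\|\bigr\}.
\]
The second summand is controlled by $\|S^{(1)}-s^{(1)}\|_2$ and $|S^{(0)}-s^{(0)}|$, times constants. This uses a lower bound on $S^{(0)}$: we have $s^{(0)}(t)\ge p_0e^{-C_ZC_\beta}$ by \Cref{assp:indep_atrisk}, and a scalar Hoeffding bound (uniform in $t$, since $Y_i$ is monotone) gives $S^{(0)}\ge s^{(0)}/2$ with probability $1-e^{-cn}$. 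At a fixed $t$, matrix Bernstein applied to the i.i.d. bounded summands gives
\[
\|S^{(k)}(t)-s^{(k)}(t)\|\lesssim\sqrt{\log(dn)/n}\quad\text{with probability } 1-n^{-3}.
\]

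\textbf{Compensator term, uniform in $t$ (main obstacle).} Passing from fixed $t$ to $\sup_t$ is the hard step; the operator-norm control and union bound are where the $\log(dn)^2$ arises. I would take a grid $t_k=k/n^{2}$ and union bound over it. Between grid points, two things change.
\begin{itemize}
\item The covariates move by at most $L_Z/n^2$ (\Cref{assp:covariates}), contributing $O(n^{-2})$.
\item The indicators $Y_i$ only decrease, and only for those $i$ with $T_i\in(t_k,t_{k+1}]$.
\end{itemize}
For the second point, the expected number of such $T_i$ is $o(1)$ provided $T$ has a bounded density. That holds for the event time by $\lambda_0\le C_\lambda$. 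For the censoring time it does not follow directly, so I would instead bracket using quantile grid points of the distribution of $T$. Either way, a Bernstein bound together with a union over the grid shows the count is at most $c\log n$ with probability $1-1/n$. The $S^{(k)}$ therefore change by at most $c\log(n)/n$ within each grid cell. Combining the three bounds gives an overall deviation of order $\log(dn)^2/\sqrt{n}$, which is crude but sufficient. Multiplying by $n$ yields the stated bound.
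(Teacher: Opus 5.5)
Your reading of the displayed event as a typo for its complement (and of $G^*$ as $G$) is right, and your argument goes through. It is, however, organised differently from the route the paper relies on. The paper gives no proof of this lemma and defers to Lemma~18 of \cite{FDPCox}. The structure of that argument is visible in the proof of \Cref{lemma:trace_convergence}, which splits $-\ddot{\ell}_n(\beta^*)/n - G(\beta^*)$ into three terms: (I) the same martingale term as yours; (II) $-\int_0^1 S^{(0)}(t,\beta^*)\{\bar Z(t,\beta^*)-\mu(t,\beta^*)\}^{\otimes 2}\dint\Lambda_0(t)$, which comes from the exact identity $G_n - V_nS^{(0)} = S^{(0)}(\bar Z-\mu)^{\otimes 2}$; and (III) $\int_0^1 G_n(t,\beta^*)\dint\Lambda_0(t) - G(\beta^*)$. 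Term (III) is an average of i.i.d.\ bounded matrices with the time integral inside each summand, so a single matrix Bernstein bound handles it with no uniformity in $t$. Uniformity is needed only for the centred vector process $\xi(t)=S^{(0)}(\bar Z-\mu)$. Since (II) equals $-\int_0^1 \xi^{\otimes 2}/S^{(0)}\dint\Lambda_0$, that process enters quadratically, at order $\log(dn)^2/n$. You instead linearise $S^{(1)\otimes 2}/S^{(0)}$ around $s^{(1)\otimes 2}/s^{(0)}$. This forces first-order uniform-in-$t$ control of $S^{(0)}$, $S^{(1)}$ and, as you wrote it, the matrix $S^{(2)}$, hence your grid-plus-quantile-bracketing step and its complication from atoms in the censoring law. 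That step is sound, but part of it is unnecessary: $\int_0^1\{S^{(2)}-s^{(2)}\}\dint\Lambda_0$ is itself an i.i.d.\ average, exactly like the paper's (III), so only the scalar $S^{(0)}$ and the vector $S^{(1)}$ need uniform control. In return, your route is self-contained. It treats the martingale term in operator norm explicitly through a matrix Freedman inequality in its continuous-time form (e.g.\ that of Bacry, Ga\"iffas and Muzy for counting-process martingales). It also yields a bound of order $\sqrt{n\log(dn)}+\log(dn)$, comfortably within $c_1\log(dn)^2 n^{1/2}$. One small fix: monotonicity of $Y_i$ does not give $S^{(0)}(t)\ge s^{(0)}(t)/2$ uniformly in $t$, because the factor $e^{\beta^{*\top}Z_i(t)}$ also moves. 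What you actually use, $\inf_t S^{(0)}(t)\ge e^{-C_ZC_\beta}n^{-1}\sum_i Y_i(1)\ge e^{-C_ZC_\beta}p_0/2$ with probability $1-e^{-cn}$, follows from Hoeffding's inequality as in \eqref{eq:hessian_denominator}.
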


\begin{lemma}
\label[lemma]{lemma:trace_convergence}
For the Hessian of the log partial-likelihood evaluated at the true parameter value $\ddot{\ell}_n(\beta^*)$, it holds that 
\begin{equation*}
    \mathbb{P}\left( \left| \mathrm{tr} \left\{\frac{1}{n}\ddot{\ell}_n(\beta^*) + G(\beta^*) \right\}\right| > \frac{c \log(dn)^2}{n^{1/2}}\right) \lesssim \frac{1}{n},
\end{equation*}
where $c>0$ is an absolute constant that does not depend on $n$ or $d$.

\end{lemma}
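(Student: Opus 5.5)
We prove \Cref{lemma:trace_convergence} by writing the normalized Hessian trace through the quantities $S^{(k)}(t,\beta^*)$ from the proof of \Cref{lemma:trace_sensitivity}. Let $s^{(k)}(t)=\mathbb{E}[Y(t)\exp(\beta^{*\top}Z(t))Z(t)^{\otimes k}]$ be their population versions, and note that $\mu(t,\beta^*)=s^{(1)}/s^{(0)}$. Then
\[
-\mathrm{tr}\{\ddot{\ell}_n(\beta^*)/n\} = \frac{1}{n}\sum_{i=1}^n\int_0^1 g_n(t)\dint N_i(t), \qquad g_n(t)=\frac{\mathrm{tr}\,S^{(2)}}{S^{(0)}}-\Big\|\frac{S^{(1)}}{S^{(0)}}\Big\|_2^2 ,
\]
and $\mathrm{tr}\,G(\beta^*)=\int_0^1 g(t)\,s^{(0)}(t)\dint\Lambda_0(t)$, where $g=\mathrm{tr}\,s^{(2)}/s^{(0)}-\|s^{(1)}/s^{(0)}\|_2^2$. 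Writing $\dint N_i=\dint M_i+Y_i e^{\beta^{*\top}Z_i}\dint\Lambda_0$, we split the difference into three terms. The first is the martingale term $(T_1)=n^{-1}\sum_i\int g_n\dint M_i$. The second is the plug-in error $(T_2)=\int (g_n-g)S^{(0)}\dint\Lambda_0$. The third is the compensator fluctuation $(T_3)=\int g\,(S^{(0)}-s^{(0)})\dint\Lambda_0$.

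\textbf{Step 1 (martingale and i.i.d.\ terms).} Since $g_n$ is predictable and $0\le g_n\le C_Z^2$ by \eqref{eq:deg2_bound}--\eqref{eq:deg1_bound}, the predictable variation of $(T_1)$ is at most $C_Z^4 n^{-1}\int S^{(0)}\dint\Lambda_0\lesssim 1/n$, using \Cref{assp:covariates}, \Cref{assp:indep_atrisk} and $\|\beta^*\|\le C_\beta$. Bernstein's inequality for martingales with bounded jumps $C_Z^2/n$ then gives $|(T_1)|\lesssim \sqrt{\log n/n}$ with probability $1-1/n$. The term $(T_3)$ is an average of i.i.d.\ variables bounded by an absolute constant, because $g$ is deterministic and bounded. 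Hoeffding's inequality therefore gives $|(T_3)|\lesssim\sqrt{\log n/n}$ with the same probability.

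\textbf{Step 2 (plug-in term, main obstacle).} We need $\sup_t|g_n(t)-g(t)|\lesssim \log(dn)^2/\sqrt n$ with probability $1-c/n$. The key point is that we only need scalar (trace) quantities, so the ambient dimension $d$ enters only logarithmically.
\begin{itemize}
\item[(a)] $\mathrm{tr}\,S^{(2)}$ and $S^{(0)}$ are scalar empirical processes in $t$ indexed by monotone indicators $Y_i(t)$ times Lipschitz functions of $t$, by \Cref{assp:covariates}. A chaining or bracketing bound over a grid of mesh $1/n$ gives uniform deviations of order $\sqrt{\log n/n}$.
\item[(b)] For the vector $S^{(1)}$, we invoke Lemma 20 of \cite{FDPCox}, already used in \eqref{eq:sample_mean_conv3}, which yields $\sup_t\|\bar Z(t)-\mu(t)\|_2\lesssim\log(n)/\sqrt n$ with high probability.
\item[(c)] Since $S^{(0)}\ge p_0e^{-C_ZC_\beta}/2>0$ uniformly with high probability, by \Cref{assp:indep_atrisk}, the ratios are Lipschitz in their arguments.
\item[(d)] The difference of squared norms satisfies $|\|\bar Z\|^2-\|\mu\|^2|\le 2C_Z\|\bar Z-\mu\|_2$.
\end{itemize}
Combining (a)--(d) bounds $\sup_t|g_n-g|$, and then $|(T_2)|\le\sup_t|g_n-g|\int S^{(0)}\dint\Lambda_0$. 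If chaining in step (a) proves awkward, we would instead reuse \Cref{lemma:hessian_convergence} directly: taking the trace gives $|\mathrm{tr}(\cdot)|\le d\|\cdot\|$, which loses a factor $d$. That route is acceptable only under the regime conditions of \Cref{lemma:scoretest_theoretical}, so the direct scalar argument above is preferred. A union bound over the three terms completes the proof.
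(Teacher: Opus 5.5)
Your proof is correct. Your martingale term $(T_1)$ is the same object as the paper's Term (I); the difference is in how you treat the compensator.

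The paper re-centres the weighted covariance at the population mean. With $G_n(t,\beta^*)=\frac1n\sum_i Y_i(t)e^{\beta^{*\top}Z_i(t)}\{Z_i(t)-\mu(t,\beta^*)\}^{\otimes 2}$, the identity $V_nS^{(0)}=G_n-S^{(0)}(\bar Z-\mu)^{\otimes 2}$ splits the compensator into two pieces:
\begin{itemize}
\item $\mathrm{tr}\int_0^1 G_n\dint\Lambda_0$, an i.i.d.\ average handled by Hoeffding with no supremum over $t$;
\item an exactly quadratic remainder $\int_0^1 S^{(0)}\|\bar Z-\mu\|_2^2\dint\Lambda_0$, which is $O(\log(dn)^2/n)$ using only the sup-norm bound on $\xi(t)=S^{(0)}(\bar Z-\mu)$ and the lower bound on $\frac1n\sum_i Y_i(1)$.
\end{itemize}

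You instead compare the nonlinear ratio $g_n$ with $g$ uniformly in $t$. So $(T_2)$ is first order, and you need separate uniform laws for $S^{(0)}$, $\mathrm{tr}\,S^{(2)}$ and $\bar Z$. The resulting $\log(n)/\sqrt n$ still fits the target, so nothing is lost in the rate. However, the paper's linearisation shows your step (a) is avoidable. Since $S^{(0)}\|\bar Z\|_2^2=S^{(0)}\|\mu\|_2^2+2\mu^\top\xi+S^{(0)}\|\bar Z-\mu\|_2^2$, everything in $(T_2)+(T_3)$ except that last quadratic term becomes an integrated i.i.d.\ average.

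If you keep step (a), be careful with the grid. A uniform mesh of width $1/n$ does not give valid brackets for $t\mapsto Y_i(t)$ when the censoring law has atoms, which the model class allows. Use a grid on quantiles of $T$ instead, or use the VC property of half-lines $\{T\ge t\}$ together with the $O(1/n)$ variation of the Lipschitz factor within a cell.

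Conversely, your Freedman/Bernstein bound for $\int g_n\dint M_i$, with predictable integrand bounded by $C_Z^2$ and jumps at most $C_Z^2/n$, is the more direct treatment of the martingale term. The paper bounds that integral by $4C_Z^2\sup_t|M(t)|$. This does not follow from boundedness of $\mathrm{tr}\,V_n$ alone; it implicitly needs an integration by parts controlling the variation of $t\mapsto\mathrm{tr}\,V_n(t)$.
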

\begin{proof}
This proof is essentially the proof of Lemma 18 in \cite{FDPCox} with some minor modifications. We start by introducing the notation 
\begin{equation*}
G_n(t, \beta) = \frac{1}{n}\sum_{i=1}^n Y_i(t) \exp(Z_i(t)^\top\beta)[Z_i(t) - \mu(t, \beta)]^{\otimes 2}, \quad \mu(t, \beta) = \frac{\mathbb{E}\{Z(t) Y(t) \exp(\beta^\top Z(t))\}}
{\mathbb{E}\{Y(t) \exp(\beta^\top Z(t))\}},
\end{equation*}
and 
\begin{equation*}
    V_n(t, \beta) = \frac{\sum_{i=1}^{n} Y_{i}(t) \exp\{\beta^{\top} Z_{i}(t)\} \left\{Z_{i}(t) - \bar{Z}(t, \beta) \right\}^{\otimes 2}}{\sum_{i=1}^n Y_i(t) \exp(\beta^\top Z_i(t))},
    \end{equation*}
so that $\mathbb{E} \int_0^1 G_n(t, \beta^*) \dint \Lambda_0(t) = G(\beta^*)$ and $\ddot{\ell}_n(\beta) = -\sum_{i=1}^n \int_0^1 V_n(t, \beta) \dint N_i(t)$.

By the triangle inequality, we have that 
\begin{align}
    &  \left|\mathrm{tr} \left(\frac{1}{n}\ddot{\ell}_n(\beta^*) + \mathbb{E} \int_0^1 G_n(t, \beta^*)\,  d\Lambda_0(t) \right) \right| \nonumber \\
    &\leq  \left| \mathrm{tr} \left( \frac{1}{n}\sum_{i=1}^n \int_0^{1} V_n(t, \beta^*) \dint N_i(s)- \int_0^{1} V_n(t, \beta^*) S^{(0)}(t, \beta^*)\, \dint \Lambda_0(t) \right)\right| \nonumber \\
    & \quad+ \left| \mathrm{tr} \left(\int_0^{1} V_n(t, \beta^*) S^{(0)}(t, \beta^*) \dint \Lambda_0(t) - \int_0^{1} G_n(t, \beta^*) \dint \Lambda_0(t)  \right)\right|\nonumber \\
    & \quad+ \left| \mathrm{tr} \left(\int_0^{1} G_n(t, \beta^*) \dint \Lambda_0(t) - \mathbb{E} \int_0^{1} G_n(t, \beta^*)\dint \Lambda_0(t)  \right)\right| \nonumber \\
    &= (I) + (II) + (III),
    \label{eq:trace_triangle}
\end{align}
where $S^{(0)}(t, \beta^*) = \frac{1}{n} \sum_{i=1}^n Y_i(t) \exp\{Z_i(t)^\top \beta^*\}$. We will give high probability upper bounds for the three terms in \eqref{eq:trace_triangle}.

\bigskip
\noindent \textbf{Term (I).}  Since $V_n(t, \beta^*)$ is the weighted average of some $v_i v_i^\top$, where each $\|v_i\|_2 \leq 2C_Z$ by \Cref{assp:covariates}, we have that for all times $t \in [0, 1]$, $\mathrm{tr}\{V_n(t, \beta^*)\}$ is a weighted average of $\mathrm{tr}\{v_i v_i^\top\} = \mathrm{tr}\{v_i^\top v_i\} \leq 4C_Z^2$. It follows that  
\begin{equation*}
    \left| \mathrm{tr} \left(\frac{1}{n}\int_0^{1}\sum_{i=1}^nV_n(t, \beta^*) \dint N_i(s)- \int_0^{1} V_n(t, \beta^*) S^{(0)}(t, \beta^*)\, \dint \Lambda_0(t) \right)\right| \leq 4C_Z^2  \sup_{t\in [0, 1]} |M(t)|
\end{equation*}
where $M(t) = \frac{1}{n}\sum_{i=1}^n N_i(t) - \int_0
^t S^{(0)}(s, \beta^*) \dint \Lambda_0(s)$ is a martingale. Since the jump times 

\begin{equation*}
    \tau_j = \inf \left\{t \in [0, 1], \,\sum_{i=1}^n N_i(t)=j \right\}, \quad j \in \{1, \dots, n\}
\end{equation*}
are stopping times, by noting that $\sup_{t \in [0, 1]} |M(t)| \leq \sup_{j} |M(\tau_j)| + 1/n$ and considering the discrete martingale $\{M(\tau_j)\}_{j=1}^n$, we obtain 
\begin{align}
    & \mathbb{P} \bigg\{\left| \mathrm{tr} \left(\frac{1}{n}\sum_{i=1}^n \int_0^{1} V_n(t, \beta^*) \dint N_i(s)- \int_0^{1} V_n(t, \beta^*) S^{(0)}(t, \beta^*)\, \dint \Lambda_0(t) \right)\right| > 4C_Z^2 \left(x+ \frac{2}{n} \right) \bigg\} \notag\\
    &\quad \leq 2 \exp \left(\frac{-cnx^2}{1+x} \right). \label{eq:hessian_1}
\end{align}

\noindent \textbf{Term (II).} We can write 
\begin{align*}
& \left| \mathrm{tr}\left( \int_0^{1} V_n(t, \beta^*)\, S^{(0)}(t, \beta^*) \dint\Lambda_0(t) - \int_0^{1} G_n(t, \beta^*) \dint\Lambda_0(t) \right)\right| \\
&\quad= \left| \mathrm{tr} \left( \int_0^{1} \{\bar{Z}(t, \beta^*) - \mu(t, \beta^*)\}^{\otimes 2} S^{(0)}(t, \beta^*)\, \dint \Lambda_0(t) \right) \right|.
\end{align*}
Let $R_i = \exp(-C_Z \|\beta^*\|_2) Y_i(1)$ for $i \in \{1, \dots, n\}$, and let 
\begin{equation*}
    \xi(t) = S^{(0)}(t, \beta^*) \left\{\bar{Z}(t, \beta^*) - \mu(t, \beta^*)\right\} = \frac{1}{n}\sum_{i=1}^n Y_i(t) \exp(Z_i(t)^\top\beta^*)\left\{Z_i(t) - \mu(t, \beta^*)\right\}.
\end{equation*}
Since $S^{(0)}(t, \beta^*) \geq \frac{1}{n}\sum_{i=1}^n R_i$ for all $t \in [0, 1]$, we have that 
\begin{equation*}
    \left|\mathrm{tr} \left(\int_0^{1} \{\bar{Z}(t, \beta^*) - \mu(t, \beta^*)\}^{\otimes 2} S^{(0)}(t, \beta^*) \dint \Lambda_0(t) \right) \right| \leq \frac{\left|\mathrm{tr} \left(\int_0^{1}\xi(t)^{\otimes 2} \dint\Lambda_0(t) \right) \right|}{\frac{1}{n}\sum_{i=1}^n R_i}.
\end{equation*}
We use a union bound argument to control the numerator and denominator separately. For the denominator, by Hoeffding's inequality, we have that 
\begin{equation}
\mathbb{P}\left(\frac{1}{n} \sum_{i=1}^n Y_i(1) < p_0/2 \right) \leq \exp \left( \frac{-np_0^2}{2}\right).
\label{eq:hessian_denominator}
\end{equation}
For the numerator, we can bound 
\begin{equation*}
    \left| \mathrm{tr} \left(\int_0^{1} \xi(t)^{\otimes 2} \dint \Lambda_0(t) \right) \right|  \leq \Lambda_0(1) \sup_{t \in [0, 1]} ||\xi(t)||^2_2.
\end{equation*}
The covering arguments from the proof of Lemma 18 of \cite{FDPCox} show that for large enough $n$, we have that 
\begin{equation}
    \mathbb{P}\left(\sup_{t \in [0, 1]} \|\xi(t)\|_2^2 > \frac{c \log(dn)^2}{n} \right) \lesssim \frac{1}{n}.
    \label{eq:sup_process}
\end{equation}

Therefore, by \eqref{eq:hessian_denominator}, and \eqref{eq:sup_process}, we have for large enough $n$ that 
\begin{equation*}
   \mathbb{P} \left(\left| \mathrm{tr} \left(\int_0^{1}  \left(V_n(t, \beta^*) S^{(0)}(t, \beta^*)  - G_n(t, \beta^*) \right) \dint\Lambda_0(t) \right)\right|  > \frac{c_1 \log(dn)^2}{n} \right) \lesssim \frac{1}{n}. 
   \label{eq:hessian_2}
\end{equation*}

\noindent \textbf{Term (III).} Term $(III)$ can be written as $\frac{1}{n}|\sum_{i=1}^n Q_i - \mathbb{E}[Q_i]|$, where
\begin{equation*}
Q_i = \mathrm{tr} \left\{ \int_0^1 Y_i(t) \exp(Z_i(t)^\top \beta^*) \{Z_i(t) - \mu(t, \beta^*)\}^{\otimes 2} \dint \Lambda_0(t)\right\},
\end{equation*}
are i.i.d.~and $Q_i \in [0, 4C_Z^2 \exp(C_Z\|\beta^*\|_2) \Lambda_0(1)]$. By Hoeffding's inequality, we therefore have that 
\begin{equation*}
    \mathbb{P} \left(\left| \mathrm{tr} \left(\int_0^{1} G_n(t, \beta^*) \dint \Lambda_0(t) - \mathbb{E} \int_0^{1} G_n(t, \beta^*)\dint \Lambda_0(t)  \right)\right| \geq \frac{c_1 \log(n)}{\sqrt{n}}\right) \lesssim \frac{1}{n}.
\end{equation*}
\end{proof}

\begin{lemma}
    Let $X_1, \dots, X_n \in \mathbb{R}^d$ be i.i.d.~random vectors with $\mathbb{E}[X_1]=0$, $\|X_1\|_2 \leq C$ a.s.~and $\mathrm{Cov}(X_1) = G$. Define $S_n = \sum_{i=1}^n X_i / \sqrt{n}$. We have for any $v \in \mathbb{R}^d$, that 
    \begin{equation*}
        \mathbb{E}[\|S_n + v\|^2_2] = \mathrm{tr}(G) + \|v\|^2_2
    \end{equation*}
    and
    \begin{equation*}
        \mathrm{Var}(\|S_n + v\|^2_2) \leq 4\mathrm{tr}(G^2) + 8v^\top G v + 4C^4/n.
    \end{equation*}
    \label[lemma]{lemma:vector_expvar}
\end{lemma}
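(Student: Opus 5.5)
Both identities follow from expanding $\|S_n+v\|_2^2 = \|S_n\|_2^2 + 2v^\top S_n + \|v\|_2^2$ and computing moments of $S_n$ directly from the i.i.d.\ mean-zero structure.

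\textbf{Mean.} Since $\mathbb{E}[S_n]=0$ and $\mathrm{Cov}(S_n) = \frac{1}{n}\sum_{i=1}^n \mathrm{Cov}(X_i) = G$, we get $\mathbb{E}\|S_n\|_2^2 = \mathrm{tr}(G)$. The cross term $2v^\top S_n$ has zero mean, so $\mathbb{E}\|S_n+v\|_2^2 = \mathrm{tr}(G) + \|v\|_2^2$.

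\textbf{Variance, first reduction.} The constant $\|v\|_2^2$ does not affect the variance. Using $\mathrm{Var}(A+B)\le 2\mathrm{Var}(A)+2\mathrm{Var}(B)$ gives
\[
\mathrm{Var}(\|S_n+v\|_2^2) \le 2\mathrm{Var}(\|S_n\|_2^2) + 8\,\mathrm{Var}(v^\top S_n),
\]
and $\mathrm{Var}(v^\top S_n) = v^\top G v$. This produces the term $8v^\top G v$. It remains to show $\mathrm{Var}(\|S_n\|_2^2) \le 2\mathrm{tr}(G^2) + 2C^4/n$.

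\textbf{Variance of $\|S_n\|_2^2$.} Write
\[
\|S_n\|_2^2 = \frac{1}{n}\sum_{i,j} X_i^\top X_j,
\]
so that
\[
\mathbb{E}\|S_n\|_2^4 = \frac{1}{n^2}\sum_{i,j,k,l}\mathbb{E}[X_i^\top X_j\, X_k^\top X_l].
\]
By independence and $\mathbb{E}[X_i]=0$, a term is nonzero only if the indices pair up. The surviving cases are:
\begin{itemize}
\item $i=j\ne k=l$: contributes $n(n-1)\,\mathrm{tr}(G)^2$.
\item $i=k\ne j=l$ and $i=l\ne j=k$: each contributes $n(n-1)\,\mathbb{E}[(X_1^\top X_2)^2] = n(n-1)\,\mathrm{tr}(G^2)$.
\item $i=j=k=l$: contributes $n\,\mathbb{E}\|X_1\|_2^4 \le nC^4$.
\end{itemize}
Dividing by $n^2$ and subtracting $(\mathbb{E}\|S_n\|_2^2)^2 = \mathrm{tr}(G)^2$ gives
\[
\mathrm{Var}(\|S_n\|_2^2) \le 2\mathrm{tr}(G^2) + \frac{C^4}{n} - \frac{\mathrm{tr}(G)^2 + 2\mathrm{tr}(G^2)}{n} \le 2\mathrm{tr}(G^2) + \frac{C^4}{n}.
\]

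\textbf{Conclusion.} Multiplying by 2 and combining with the reduction above yields
\[
\mathrm{Var}(\|S_n+v\|_2^2) \le 4\mathrm{tr}(G^2) + 8v^\top Gv + \frac{2C^4}{n},
\]
which is within the stated bound. No step is a real obstacle; the only care needed is the index-pairing bookkeeping in the fourth-moment expansion.
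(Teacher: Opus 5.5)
Your proof is correct. Its skeleton matches the paper's: expand $\|S_n+v\|_2^2$, use $\mathrm{Var}(A+B)\le 2\mathrm{Var}(A)+2\mathrm{Var}(B)$ to peel off $8v^\top Gv$, and reduce to bounding $\mathrm{Var}(\|S_n\|_2^2)$.

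The routes differ in that last step. The paper splits $\|S_n\|_2^2$ into the diagonal part $\frac{1}{n}\sum_i\|X_i\|_2^2$ and the off-diagonal U-statistic $\frac{1}{n}\sum_{i\neq j}X_i^\top X_j$. It then applies the $2\mathrm{Var}+2\mathrm{Var}$ inequality a second time and invokes the variance formula for degenerate U-statistics. You instead compute $\mathbb{E}\|S_n\|_2^4$ exactly by index pairing.

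Your route needs a little more bookkeeping, but it yields the exact identity $\mathrm{Var}(\|S_n\|_2^2)=\frac{1}{n}\{\mathbb{E}\|X_1\|_2^4-\mathrm{tr}(G)^2\}+\frac{2(n-1)}{n}\mathrm{tr}(G^2)$. This shows the diagonal and off-diagonal parts are uncorrelated, so the paper's second factor of $2$ is unnecessary.

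That matters for the constant. Over ordered pairs, $\mathrm{Var}(\frac{1}{n}\sum_{i\neq j}X_i^\top X_j)=\frac{2(n-1)}{n}\mathrm{tr}(G^2)$, so the paper's split really gives $\frac{2C^4}{n}+\frac{4(n-1)}{n}\mathrm{tr}(G^2)$. Its step (b) writes $\frac{2(n-1)}{n}$ there, dropping a factor of $2$. As written, the paper's argument therefore only supports $8\,\mathrm{tr}(G^2)$.

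Your exact computation is what actually justifies the stated $4\,\mathrm{tr}(G^2)$, and it also improves $4C^4/n$ to $2C^4/n$. For the paper's downstream uses only the order of magnitude matters, so either version suffices there.
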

\begin{proof}
We have that 
\begin{align*}
    \mathbb{E}[\|S_n  + v\|^2_2] &= \frac{1}{n} \mathbb{E} \left[ \sum_{j=1}^d \left(\sum_{i=1}^n X_{ij} \right)^2\right] + \frac{2}{\sqrt{n}} \sum_{i=1}^n \mathbb{E}[X_i^\top v] + \|v\|^2_2 \\
    &= \sum_{j=1}^d \mathbb{E}[X_{1j}^2] + \|v\|^2_2 =\mathrm{tr}(G) + \|v\|^2_2.
\end{align*}

We may bound the variance by 
    \begin{align*}
        \mathrm{Var}(\|S_n + v\|^2_2) = \mathrm{Var}(\|S_n\|^2_2 + 2 S_n^\top v) &\leq 2\mathrm{Var}(\|S_n\|^2_2 ) + 8 \mathrm{Var}(S_n^\top v) \\
        &= 2 \mathrm{Var}(\|S_n\|^2_2) + 8 v^\top G v.
    \end{align*}
    
    The $\mathrm{Var}(\|S_n\|_2^2)$ term can be bound by
    \begin{align*}
        \mathrm{Var}(\|S_n\|^2_2) &
        =\mathrm{Var}\left( \frac{1}{n} \sum_{i=1}^n \|X_i\|^2_2 + \frac{1}{n} \sum_{i \neq j} X_i^\top X_j \right) \\
        &\overset{(a)}{\leq} \frac{2C^4}{n} + 2 \mathrm{Var} \left(\frac{1}{n} \sum_{i\neq j} X_i^\top X_j \right) \\
        &\overset{(b)}{=} \frac{2C^4}{n} + \frac{2(n-1)}{n} \mathrm{Var}(X_1^\top X_2) \\
        &\leq \frac{2C^4}{n} + 2\mathbb{E} \left[ (X_1^\top X_2)^2 \right] = \frac{2C^4}{n} + 2\mathbb{E} \left[ \mathrm{tr} \left\{ X_1 X_1^\top X_2 X_2^\top \right\}  \right] \\
        &= \frac{2C^4}{n} + 2\mathrm{tr} \left\{G^2 \right\}
    \end{align*}
where $(a)$ is due to the Cauchy--Schwarz inequality and that $\|X_i\|_2 \leq C$ a.s.~and $(b)$ is due to standard results for U-statistics \citep[e.g.][Chapter 1.3]{lee1990ustatistics}. 
\end{proof}

\begin{lemma}
 \label[lemma]{lemma:lrt_taylor}
    Let $f(x, y) = e^{x+y}/(e^x + e^y - 1)$, then 
    \begin{equation*}
        f(x, y) = 1 + xy - (x^2y + xy^2)/2 + O(|x|^4 + |y|^4).
    \end{equation*}
\end{lemma}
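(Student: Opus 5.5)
Since $f$ is smooth near the origin (the denominator $e^x+e^y-1$ equals $1$ at $(0,0)$), the plan is to expand $f$ to third order and control the remainder by Taylor's theorem. The expansion is most convenient in logarithmic form. Write
\begin{equation*}
\log f(x,y) = x + y - \log(e^x + e^y - 1)
\end{equation*}
and set $u = e^x + e^y - 2$. Expanding the exponentials gives
\begin{equation*}
u = (x+y) + \tfrac{1}{2}(x^2+y^2) + \tfrac{1}{6}(x^3+y^3) + O(|x|^4+|y|^4).
\end{equation*}
Then $\log(1+u) = u - u^2/2 + u^3/3 + O(u^4)$, so I will collect terms by total degree in $(x,y)$.

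The degree-one part is $x+y$, which cancels against the leading $x+y$. At degree two the contribution is $\tfrac12(x^2+y^2) - \tfrac12(x+y)^2 = -xy$, so $\log f$ has quadratic term $+xy$. At degree three the contribution is
\begin{equation*}
\tfrac16(x^3+y^3) - \tfrac12 (x+y)(x^2+y^2) + \tfrac13 (x+y)^3.
\end{equation*}
Simplifying the cubic coefficients gives $0\cdot(x^3+y^3) + \tfrac12(x^2y+xy^2)$. It follows that $\log(1+u) = x + y - xy + \tfrac12(x^2y+xy^2) + O(\cdot)$, and therefore
\begin{equation*}
\log f = xy - \tfrac12(x^2y + xy^2) + O(|x|^4+|y|^4).
\end{equation*}
Exponentiating via $e^{w} = 1 + w + w^2/2 + \dots$, the term $w^2/2$ is already of degree four. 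Hence $f = 1 + xy - (x^2y+xy^2)/2 + O(|x|^4+|y|^4)$, which is the claimed form.

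The only point needing care is that all remainders are genuinely $O(|x|^4+|y|^4)$ uniformly in a neighbourhood of the origin. This follows because $f$ is $C^\infty$ near $(0,0)$, so Taylor's theorem with remainder gives a degree-four remainder bounded by a constant times $\|(x,y)\|^4 \lesssim |x|^4+|y|^4$ on a bounded set. In the application the arguments $\frac{r}{\sqrt d}\theta^\top Z$ are small, so this local statement suffices.

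As a cross-check I will verify that $f(x,0)=1$ and $f(0,y)=1$ exactly, since $e^{x}/(e^x+1-1)=1$. This shows that no pure $x^k$ or $y^k$ terms appear, consistent with the expansion. The main obstacle is purely the bookkeeping of the cubic coefficients; I will verify it by checking the coefficient of $x^2y$ directly, via $\partial_x^2\partial_y f(0,0)/2 = -\tfrac12$.
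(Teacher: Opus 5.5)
Your proof is correct. It reaches the same degree-three Taylor polynomial as the paper, but by a different decomposition. The paper multiplies the cubic expansion of $e^{x+y}$ by a cubic expansion of $(e^x+e^y-1)^{-1}$, obtained from $(1+q)^{-1}=1-q+q^2-q^3+O(q^4)$. That forces it to carry reciprocal terms such as $2xy$ and $-2(x^2y+xy^2)$, which only cancel after the product is formed.

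Your choice of working with $\log f = x+y-\log(e^x+e^y-1)$ removes the linear terms at the outset. The final exponentiation then contributes nothing below degree four, so your bookkeeping is lighter. You also justify that the $O(|x|^4+|y|^4)$ remainder holds uniformly near the origin, using smoothness of $f$ and $\|(x,y)\|_2^4\le 2(|x|^4+|y|^4)$; the paper leaves this implicit in ``for $x,y$ close enough to $0$''.

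For an even shorter check, use the exact identity $f(x,y)-1=(e^x-1)(e^y-1)/(e^x+e^y-1)$. It gives your cross-check $f(x,0)=f(0,y)=1$ for free. Expanding $(xy+\tfrac12x^2y+\tfrac12xy^2)(1-x-y)$ then yields the claim directly, up to $O(|x|^4+|y|^4)$.
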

for $x, y$ close enough to 0. 
\begin{proof}
We use the Taylor expansion of $\exp(x)$, around zero and up to degree three, to obtain 
\begin{equation*}
    \exp(x+y) = 1+x+y+\frac{x^2+2xy+y^2}{2}
+\frac{x^3+3x^2y+3xy^2+y^3}{6}
+O((x+y)^4)
\end{equation*}
and substituting the Taylor expansions for 
\begin{equation*}
    \exp(x)+\exp(y)-1 = 1 + x + y + \frac{x^2+y^2}{2} + \frac{x^3 + y^3}{6} + O(x^4 + y^4) 
\end{equation*}
into the approximation $(1+q)^{-1} = 1-q+q^2-q^3 + O(q^4)$, we have
\begin{equation*}
    \frac{1}{e^x+e^y- 1} = 1-(x+y)+\frac{x^2+y^2}{2}+2xy
-\frac{x^3+y^3}{6} -2(x^2y + xy^2)
+O(|x|^4 + |y|^4).
\end{equation*}
Taking product and retaining all terms with degree less than or equal to three leads to 
    \begin{align*}
        f(x, y) 
        &= 1+ x y - \frac{1}{2}(x^2y + y^2x) + O(|x|^4 + |y|^4).
    \end{align*}
\end{proof}
\end{document}